\documentclass[reqno]{amsart}

\usepackage[T1]{fontenc}
\usepackage{lmodern}
\usepackage{amsmath,amsthm,amsfonts,amssymb,mathrsfs,mathtools}
\usepackage{microtype}
\usepackage[shortlabels]{enumitem}
\usepackage{hyperref}
\hypersetup{pdftitle={Prime quadratic-twist L-values and spanning sets of cusp forms},
 pdfauthor={Tianyu Ni},
 colorlinks=false,
 linkcolor=blue}
\setlist[enumerate]{leftmargin=*,label=(\roman*)}
\DeclareMathOperator{\Sl}{SL}
\DeclareMathOperator{\Gl}{GL}
\DeclareMathOperator{\Span}{Span}
\DeclareMathOperator{\Sym}{Sym}
\DeclareMathOperator{\Li}{Li}
\DeclareMathOperator{\re}{Re}
\DeclareMathOperator{\Tr}{Tr}

\DeclareMathOperator{\New}{new}
\DeclareMathOperator{\odd}{odd}
\newcommand{\legendre}[2]{\left(\frac{#1}{#2}\right)}

\newcommand{\EE}{\mathbb E}
\newcommand{\cP}{\mathscr P}
\newcommand{\HH}{\mathcal H}
\newcommand{\CC}{\mathbb C}
\newcommand{\ZZ}{\mathbb Z}
\newcommand{\vect}[1]{\boldsymbol{#1}}
\newtheorem{theorem}{Theorem}[section]
\newtheorem{lemma}[theorem]{Lemma}
\newtheorem{proposition}[theorem]{Proposition}
\newtheorem{corollary}[theorem]{Corollary}
\theoremstyle{definition}
\newtheorem{definition}[theorem]{Definition}

\newtheorem{question}{Question}
\theoremstyle{remark}
\newtheorem{remark}[theorem]{Remark}
\numberwithin{equation}{section}
\date{}

\begin{document}

\title[Prime quadratic-twist $L$-values]{Uniform non-vanishing of prime quadratic twists of standard and symmetric square \texorpdfstring{$L$}{L}-functions
}
\author{Tianyu Ni}
\address{School of Mathematical and Statistical Sciences, Clemson University,
Clemson, SC 29634-0975, USA}
\email{tianyuni1994math@gmail.com}
\subjclass[2020]{11F11, 11F66, 11F67}
\keywords{Quadratic twists; Symmetric square $L$-functions; Twisted periods; Random Euler products}

\begin{abstract}
Let $S_k$ be the space of cusp forms of weight $k$ for $\Sl_2(\mathbb Z)$,
with $\dim S_k\geq1$, and let $\chi_p=\legendre{\cdot}{p}$ for odd primes $p$.
For fixed $s$ with $\re(s)>(k+1)/2$, we prove that the proportion of
primes $p\leq X$ satisfying $L(h,\chi_p,s)=0$ tends to zero as
$X\to\infty$, uniformly over nonzero $h\in S_k$.
We also prove an analogous statement for nontrivial linear combinations of
twisted symmetric square $L$-values of the normalized Hecke eigenforms
when $\re(s)>k$. 
The main ingredient of the proof is a non-concentration theorem for random Euler products under a pairwise local separation condition. 
As applications, we obtain spanning and density-one basis results in $S_k$ for the kernels associated with twisted standard and symmetric square $L$-functions, with explicit basis criteria in specified right half-planes when $\dim S_k=2$ or $3$. We show that almost every tuple of $\dim S_k$ primes gives a basis of $S_k^*$ from twisted periods at every noncentral critical index. We also prove a density-one result for rational bases of $S_k$ obtained from traces of Rankin–Cohen brackets of Eisenstein series at prime levels.

\end{abstract}

\maketitle
\section{Introduction}\label{sect:introduction}

For an even integer $k\geq4$, let $S_k$ be the space of cusp forms of weight
$k$ for $\Sl_2(\ZZ)$. Throughout the paper, we assume that $d:=\dim S_k\geq1$,
and write $\HH_k=\{f_1,\ldots,f_d\}$ for its normalized Hecke eigenforms.
For an odd prime $p$, let $\chi_p=\legendre{\cdot}{p}$. 
Let $f(z)=\sum_{n\geq1}a_f(n)q^n$ be a cusp form in $S_k$, not necessarily a Hecke eigenform. Then one can associate  $f$ with a twisted $L$-function given by the Dirichlet series:
\begin{align*}
L(f,\chi_p,s)=\sum_{n=1}^{\infty}a_f(n)\chi_p(n)n^{-s},\qquad\re(s)\gg0,
\end{align*}
which converges absolutely for $\re(s)>(k+1)/2$ by Deligne's bound $a_f(n)\ll_{f,\epsilon}n^{(k-1)/2+\epsilon}$, and has an analytic continuation to the whole complex plane.
Note that for fixed $s$, each prime $p$ gives a linear functional $h\mapsto L(h,\chi_p,s)$ for $h\in S_k$.
It is therefore natural to ask how often $d$ such evaluations determine
an arbitrary form in $S_k$.

There are many works on determining modular forms by twisted $L$-values. For example, Luo and Ramakrishnan \cite{LuoRamakrishnan1997} proved determination results for elliptic modular forms using twisted critical values; Chinta and Diaconu \cite{ChintaDiaconu2005}
proved a corresponding result for cusp forms on
$\mathrm{GL}_3$; see also the extensions in
\cite{KuanLesesvre2022}. These results concern twisted $L$-values at the center of the critical strip. In this paper, we study twisted \(L\)-values in the region of absolute convergence and give estimates on the proportion of primes $p\leq X$ satisfying $L(h,\chi_p,s)=0$ as $X\to\infty$, uniformly over nonzero $h\in S_k$. These estimates yield density-one results for bases of cusp forms. In dimensions two and three, we also give explicit conditions on the primes that produce such bases.

Let us first recall a motivating result below.  For $b\in\{1,3\}$, put
\begin{align*}
 \cP_{4,b}&:=\{p:p\text{ is prime and }p\equiv b\pmod4\},\\
 \cP_{4,b}(X)&:=\{p\leq X:p\text{ is prime and }p\equiv b\pmod4\},\\
 P_b(X)&:=\#\cP_{4,b}(X).
\end{align*}
Note that $P_b(X)\rightarrow\infty$ as $X\rightarrow\infty$ by Dirichlet's theorem on arithmetic progressions.
In previous work \cite[Theorem 3.2]{Ni2026}, it was shown that, for a cusp form $f\in S_k$ and a complex number $s$ with
$\re(s)>(k+1)/2$,
\begin{align}\label{eq:previous}
 L(f,\chi_p,s)=0\quad\text{for all }p\equiv b\pmod4
 \qquad\Longrightarrow\qquad f=0.
\end{align}
Our first main result shows that the primes at which a twisted value vanishes have relative density zero, uniformly over all nonzero cusp forms in $S_k$, thus strengthening \cite[Theorem 3.2]{Ni2026}.   Throughout the paper, for $G(X)>0$, the notation $F(X)=o(G(X))$ means
$F(X)/G(X)\to0$ as $X\to\infty$.

\begin{theorem}\label{thm:standard}
Fix $b\in\{1,3\}$ and $s\in\CC$ such that $\re(s)>(k+1)/2$. Then
\begin{align}\label{eq:standarduniform}
 \sup_{0\neq h\in S_k}
 \#\{p\in\cP_{4,b}(X):L(h,\chi_p,s)=0\}=o(P_b(X)).
\end{align}
Equivalently, for any $\epsilon>0$, there exists some $X_0$ independent of $h$, such that 
$$\#\{p\in\cP_{4,b}(X):L(h,\chi_p,s)=0\}<\epsilon P_b(X)$$
for every $X\geq X_0$ and every $0\neq h\in S_k$.
\end{theorem}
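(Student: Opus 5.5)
We reduce to a statement about the vector of eigenform twists. Write $h=\sum_{j=1}^d c_jf_j$ with $c=(c_1,\dots,c_d)\in\CC^d\setminus\{0\}$; by homogeneity we may normalize $\|c\|=1$. Then $L(h,\chi_p,s)=\sum_j c_jL(f_j,\chi_p,s)$, and each eigenform twist has an absolutely convergent Euler product
\[
L(f_j,\chi_p,s)=\prod_{\ell}\bigl(1-a_{f_j}(\ell)\chi_p(\ell)\ell^{-s}+\chi_p(\ell)^2\ell^{k-1-2s}\bigr)^{-1},
\]
since $\re(s)>(k+1)/2$. Set $v_p:=(L(f_1,\chi_p,s),\dots,L(f_d,\chi_p,s))\in\CC^d$. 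The claim becomes
\[
\sup_{\|c\|=1}\#\{p\in\cP_{4,b}(X): \langle c,v_p\rangle=0\}=o(P_b(X)),
\]
so we must show that the $v_p$ do not concentrate on any hyperplane, uniformly in the hyperplane.

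The plan is to model $v_p$ by a random Euler product. For a finite set of small primes $\ell\le y$ (odd $\ell$, and $\ell=2$, where $\chi_p(2)$ depends on $p\bmod 8$), quadratic reciprocity together with the condition $p\equiv b\pmod 4$ makes $\chi_p(\ell)=\pm\epsilon_b(\ell)\legendre{p}{\ell}$. By Dirichlet's theorem in progressions modulo $8\prod_{\ell\le y}\ell$, the tuple $(\chi_p(\ell))_{\ell\le y}$ equidistributes over $\{\pm1\}^{\pi(y)}$ as $p$ runs through $\cP_{4,b}(X)$ with $p\nmid$ the modulus. The tail $\prod_{\ell>y}$ is $1+O(y^{(k+1)/2-\re(s)+\epsilon})$ uniformly in $p$ by Deligne's bound. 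So $v_p=V_y(\chi_p)+O(\delta_y)$, where $V_y$ is the truncated product vector and $\delta_y\to0$. The limiting random vector $V=\prod_\ell(\text{local factor at }\ell\text{ with independent signs }\epsilon_\ell)$ is well defined.

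The key step is a non-concentration lemma: for every $\eta>0$ there are $y$ and $\rho>0$ such that
\[
\sup_{\|c\|=1}\Pr\bigl(|\langle c,V_y\rangle|<\rho\bigr)<\eta .
\]
Since each coordinate of $V$ is bounded away from $0$ and $\infty$ uniformly, this transfers to primes as follows. The set of $p$ with $\langle c,v_p\rangle=0$ has $|\langle c,V_y(\chi_p)\rangle|\le C\delta_y<\rho$ once $y$ is large. The proportion of such $p$ is then at most $\eta+o(1)$ by equidistribution, because the event depends only on finitely many signs. The $o(1)$ is uniform in $c$, since only $2^{\pi(y)}$ sign patterns occur. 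That gives the theorem.

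The main obstacle is proving the non-concentration lemma, and this is where a pairwise local separation condition enters. Multiplicity one, or simply distinct Hecke eigenvalues, gives for each $i\neq j$ infinitely many primes $\ell$ with $a_{f_i}(\ell)\neq a_{f_j}(\ell)$. Then the ratio of the $\ell$-local factors of $f_i$ and $f_j$ differs between $\epsilon_\ell=+1$ and $\epsilon_\ell=-1$. I would argue by contradiction and compactness. If the lemma failed, there would be unit vectors $c^{(n)}\to c$ and $\rho_n\to0$ with concentration probability at least $\eta$ for every $y$. In the limit, $\langle c,V\rangle=0$ would hold with probability at least $\eta$ on the infinite product space. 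To turn this into an exact identity I would condition on all but one prime $\ell$, or finitely many. For generic values of the other coordinates, $\langle c,V\rangle$ is a combination $A+B\epsilon_\ell$, where the $\ell$-factors vary independently across the indices $j$ with distinct eigenvalues. A Kolmogorov zero-one style argument then makes the zero event a tail event, and it has positive probability. Using separating primes for each pair $(i,j)$ successively, one forces $c_j\to0$ for all but one index, and the surviving single coordinate is nonvanishing, which is a contradiction. Making this step rigorous, in particular handling the almost-sure identity through conditional independence and using the separation primes pairwise, is the delicate part. I expect to argue via a Vandermonde-type determinant in the local factors over $2^d$ sign choices at $d$ separating primes.
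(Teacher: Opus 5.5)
Your reduction and transfer steps are sound, and they take a somewhat more elementary route than the paper. Compactness of the unit sphere reduces the uniform small-ball bound to the pointwise statement $\Pr(\langle c,V\rangle=0)=0$ for each fixed $c\ne0$. Counting over the $2^{\pi(y)}$ sign patterns then replaces the paper's weak convergence $\nu_X\Rightarrow\nu$ and the Billingsley--Tops\o e lemma. Two repairs are needed there. First, fix the quantifiers: choose $\rho$ from the bound for the full product $V$ first, and only then $y$ with $C\delta_y<\rho/2$; as you wrote it, $\rho$ depends on $y$, so $C\delta_y<\rho$ is not guaranteed. Second, distinct eigenvalue systems give only one prime with $a_{f_i}(\ell)\ne a_{f_j}(\ell)$. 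You need infinitely many, from strong multiplicity one or Ramakrishnan's theorem as in the paper.

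The genuine gap is the pointwise statement itself, which is the heart of the theorem, and your sketch does not prove it. Write $v_{j,\ell}(\pm1)$ for the $\ell$-local factor of $f_j$.

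\emph{The zero event is not a tail event.} $A=\{\langle c,V\rangle=0\}$ changes under a single sign flip, because flipping $\epsilon_\ell$ multiplies $V_j$ by $v_{j,\ell}(-\epsilon_\ell)/v_{j,\ell}(\epsilon_\ell)$. This ratio depends on $j$ exactly at the separating primes, so the flip moves $V$ on or off the hyperplane. Kolmogorov's law therefore does not apply, and ``positive probability, hence probability one'' is unsupported.

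\emph{Successive elimination fails.} Conditioning on all signs but one separating prime $\ell$ gives at most the two relations $\sum_m c_mD_mv_{m,\ell}(\pm1)=0$. For $d\ge3$ these force no $c_m$ to vanish once three of the vectors $(v_{m,\ell}(1),v_{m,\ell}(-1))$ are pairwise non-proportional.

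\emph{$d$ primes with a Vandermonde is the wrong device.} Under mere pairwise separation, some pair $(i,j)$ may be unseparated at every prime of your block. Then the block products for $i$ and $j$ are proportional, and linear independence fails.

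What is missing is the following argument. For each $n$, choose distinct primes $\ell_{ij}$ ($i<j$), all exceeding the $n$-th prime, with $\lambda_{f_i}(\ell_{ij})\ne\lambda_{f_j}(\ell_{ij})$. Let $B$ be this block of $\binom d2$ primes.

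\emph{Step 1 (linear independence).} The functions $F_i(\eta)=\prod_{\ell\in B}v_{i,\ell}(\eta_\ell)$ are linearly independent on $\{\pm1\}^B$. For fixed $i$, apply the product functional $\bigotimes_{\ell\in B}\phi_\ell$. Here $\phi_{\ell_{ij}}(h)=v_{j,\ell_{ij}}(-1)h(1)-v_{j,\ell_{ij}}(1)h(-1)$ kills $v_{j,\ell_{ij}}$ but not $v_{i,\ell_{ij}}$, and $\phi_\ell$ is a point evaluation where $v_{i,\ell}\ne0$ at the remaining primes of $B$. Non-proportionality holds because, with $t=\ell^{-w}$ and $w=s-(k-1)/2$, the ratios $(1+\lambda t+t^2)/(1-\lambda t+t^2)$ differ when the $\lambda$'s differ.

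\emph{Step 2 (conditional bound).} Write $V_i=D_iF_i$, with $D_i\ne0$ determined by the signs outside $B$. Then at least one of the $2^{|B|}$ patterns on $B$ makes $\langle c,V\rangle\ne0$. Hence $\Pr(A\mid\sigma(\epsilon_\ell:\ell\notin B))\le1-2^{-\binom d2}$ almost surely.

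\emph{Step 3 (martingale limit).} Since $B$ lies beyond the first $n$ primes, the tower property gives $\Pr(A\mid\mathcal F_n)\le1-2^{-\binom d2}$ for every $n$, where $\mathcal F_n$ is generated by the first $n$ signs. L\'evy's upward theorem, $\Pr(A\mid\mathcal F_n)\to\mathbf 1_A$, then forces $\Pr(A)=0$.

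This martingale step, not a tail-event argument, is what turns pairwise local separation into non-concentration.
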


Our second result concerns twisted symmetric square $L$-functions. Let $f(z)=\sum_{n\geq1}a_f(n)q^n$ be a normalized Hecke eigenform in $S_k$. For each prime $r$, we denote by $\alpha_f(r)$ and $\beta_f(r)$ the roots of 
$$X^2-a_f(r)X+r^{k-1}=0.$$
Following \cite{Shimura1975},
we define the twisted symmetric square $L$-function by
\begin{align}\label{eq:symdefinition}
 L(\Sym^2f,\chi_p,s)
 =&\prod_r\prod_{j=0}^2
 (1-\chi_p(r)\alpha_f(r)^{2-j}\beta_f(r)^j r^{-s})^{-1},
\end{align}
which converges absolutely for $\re(s)>k$. For
$h=\sum_{i=1}^dc_if_i\in S_k$, define the linear functional
\begin{align}\label{eq:symfunctional}
 \mathcal L^{(2)}_{p,s}(h)
:=\sum_{i=1}^dc_iL(\Sym^2f_i,\chi_p,s).
\end{align}
Recall that the critical values (right of the central point) for $L(\Sym^2f,\chi_p,s)$ are at the points $s\in\{k,\ldots,2k-2\}$ with $(-1)^s=\chi_p(-1)$; see \cite[Proposition 2.2.1]{LoefflerIwasawa2019}. A natural question is whether, for a fixed prime $p$, these linear functionals for different $s$ at the critical points span the whole space $S_k^{\ast}$.   For example, \cite{NiXueSymmetricPeriods2025,NiXueperiodandsymperiod2026} studied this question for the non-twisted case in level one, and \cite{Ni2026archive} considered the non-twisted case in square-free levels.   While the question remains open (see e.g., \cite[Conjecture 6.1]{NiXueSymmetricPeriods2025}), Corollary \ref{cor:determinant} to the following theorem presents another aspect where $s$  is fixed with $\re(s)>k$ but $p$ varies, in which case almost every ordered $d$-tuple of primes in $\mathscr{P}_{4,b}$ gives a basis $\mathcal L^{(2)}_{p_1,s},\ldots,\mathcal L^{(2)}_{p_d,s}$ of $S_k^{\ast}$. 

\begin{theorem}\label{thm:symmetric}
Fix $b\in\{1,3\}$ and $s\in\CC$ such that $\re(s)>k$. Then
\begin{align}\label{eq:symuniform}
 \sup_{\vect c\in\CC^d\setminus\{0\}}
 \#\left\{p\in\cP_{4,b}(X):
 \sum_{i=1}^dc_iL(\Sym^2f_i,\chi_p,s)=0\right\}=o(P_b(X)).
\end{align}
Equivalently, for any $\epsilon>0$, there exists some $X_0$ independent of $\vect{c}=(c_1,\ldots,c_d)$, such that 
$$\#\left\{p\in\cP_{4,b}(X):
\sum_{i=1}^dc_iL(\Sym^2f_i,\chi_p,s)=0\right\}<\epsilon P_b(X)$$
for every $X\geq X_0$ and every $\vect c\in\CC^d\setminus\{0\}$.
\end{theorem}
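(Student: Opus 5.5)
We model the values $L(\Sym^2f_i,\chi_p,s)$, as $p$ runs over $\cP_{4,b}$, by a random Euler product. For each prime $r\neq 2$ the symbol $\chi_p(r)$ is determined by $p$ modulo $4r$ (quadratic reciprocity, with $p\equiv b\pmod 4$ fixed). By Dirichlet's theorem in progressions, for any finite set $R$ of odd primes the vector $(\chi_p(r))_{r\in R}$ becomes equidistributed on $\{\pm1\}^R$ as $p$ ranges over $\cP_{4,b}(X)$. The factor at $r=2$ is also fixed by $b$ up to $p\bmod 8$, and we can handle it by splitting into the two classes mod $8$.

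We therefore compare with the random vector
\[
\vect Z=(Z_1,\dots,Z_d),\qquad Z_i=\prod_r\prod_{j=0}^2\bigl(1-\epsilon_r\alpha_{f_i}(r)^{2-j}\beta_{f_i}(r)^jr^{-s}\bigr)^{-1},
\]
where the $\epsilon_r$ are independent uniform signs. Since $\re(s)>k$ and $|\alpha_{f_i}(r)|=|\beta_{f_i}(r)|=r^{(k-1)/2}$, the tail $\prod_{r>y}$ is $1+O(y^{k-\re s})$ uniformly in $p$. Hence the $p$-values are uniformly approximated by truncated products $\vect Z^{(y)}(p)$.

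The key step is a non-concentration statement. Define
\[
\sup_{\|\vect c\|=1}\ \sup_{\delta}\ \Pr\bigl(|\langle\vect c,\vect Z^{(y)}\rangle|<\delta\bigr)=:Q(\delta).
\]
We want $Q(\delta)\to0$ as $\delta\to0$, uniformly in $y\ge y_0$ and in unit vectors $\vect c$.

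Granting this, the result follows. Fix $\epsilon>0$ and choose $\delta$ with $Q(\delta)<\epsilon/2$. Then choose $y$ with tail error $<\delta/2$, using $\|\vect c\|=1$ and the uniform bound $|Z_i|\ll1$. If the combination vanishes at $p$, then $|\langle\vect c,\vect Z^{(y)}(p)\rangle|<\delta/2$. The proportion of $p\in\cP_{4,b}(X)$ with this property tends to the random probability, by finite equidistribution. To make this uniform in $\vect c$, we use compactness of the unit sphere: cover it by finitely many $\delta/4$-balls and apply equidistribution to each center. This gives the bound $<\epsilon P_b(X)$ for $X\ge X_0$, with $X_0$ independent of $\vect c$.

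To prove non-concentration we condition on all signs except $\epsilon_r$ for a single large prime $r$. Then
\[
\langle\vect c,\vect Z\rangle=\sum_i c_iA_i\,E_{i,r}(\epsilon_r),
\]
where $A_i$ is the product over the other primes, which is bounded above and below. The two values $\epsilon_r=\pm1$ give
\[
E_{i,r}(\pm1)=1\pm a_{f_i}(r^2)r^{-s}+\dots.
\]
Concentration at both signs forces
\[
\Bigl|\sum_i c_iA_i\,a_{f_i}(r^2)\,r^{-s}\Bigr|\lesssim\delta .
\]
This is the pairwise local separation: we need that, for many $r$, the local data $\bigl(a_{f_i}(r^2)r^{-(k-1)}\bigr)_i$ separate the $f_i$ pairwise. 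For distinct eigenforms, $\Sym^2f_i\neq\Sym^2f_j$ (level one, non-CM), so by strong multiplicity one the Hecke eigenvalues $\lambda_{f_i}(r^2)$ differ for a positive density of $r$.

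Using independence across many primes $r_1,\dots,r_m$, we then argue by an iterative, Littlewood–Offord-type argument: each separating prime halves the chance of staying in a small ball. This gives
\[
Q(\delta)\le 2^{-m}+o_\delta(1)
\]
once $m$ is large.

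The main obstacle is this step: controlling all $\vect c$ simultaneously. The weights $c_iA_i$ themselves depend on the conditioning, and only one pair $i,j$ needs to be separated at each prime. I would first reduce to the lemma that the vectors $\bigl(\lambda_{f_i}(r^2)\bigr)_i$, over $r$ in a positive-density set, do not all lie near a single hyperplane. I would prove this via linear independence of the $\Sym^2f_i$ (a Rankin–Selberg or Sato–Tate average), which shows the $d\times d$ Gram matrix averaged over $r$ is nonsingular.
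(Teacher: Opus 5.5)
Your outer framework is sound. The uniform tail bound, the equidistribution of $(\chi_p(r))_{r\le y}$ on $\{\pm1\}^{\pi(y)}$ (with $r=2$ handled through the two classes modulo $8$), and the covering of the unit sphere (by balls of radius $\delta/(4C)$, where $C$ bounds $\|\vect Z^{(y)}\|$) reduce the theorem to $Q(\delta)\to0$, with the inner supremum taken over $y\ge y_0$. This is a fine hands-on replacement for the paper's weak-convergence and Billingsley--Tops\o e step.

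The gap is in your proof of $Q(\delta)\to0$. The principle ``each separating prime halves the chance of staying in a small ball'' is false. Conditioning on all signs but $\varepsilon_r$ does give conditional probability at most $1/2$ when the two values differ by more than $2\delta$. But such bounds at different primes do not multiply. Already for $\varepsilon_1+\cdots+\varepsilon_m$ every single flip moves the value by $2$, yet $\Pr(\varepsilon_1+\cdots+\varepsilon_m=0)\asymp m^{-1/2}$, not $2^{-m}$. For $d=2$ you could repair this with the Erd\H{o}s--Littlewood--Offord bound, since smallness of $c_1Z_1+c_2Z_2$ pins $\log Z_1-\log Z_2$, a sum of independent terms, near finitely many points. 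For $d\ge3$, however, $\sum_i c_iA_iE_{i,r}(\varepsilon_r)$ is a linear combination of products, and the weights $c_iA_i$ in the increment at $r$ depend on all the other signs. No anti-concentration inequality for this nonlinear situation is supplied. Your Gram-matrix lemma (true, via Rankin--Selberg for $\Sym^2f_i\times\Sym^2f_j$) only gives first-order separation at one prime for one fixed weight vector, so it does not address the iteration. The step you flag as the main obstacle is genuinely missing.

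The idea that closes it is to prove an exact statement first and recover uniformity by compactness, with no quantitative Littlewood--Offord input.

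\begin{enumerate}
\item Fix $\vect c\neq0$. Beyond the first $n$ primes, choose distinct primes $r_{ij}$ $(i<j)$ with $a_{f_i}(r_{ij})^2\neq a_{f_j}(r_{ij})^2$. Infinitely many exist by Ramakrishnan's theorem. Note that $\Sym^2f_i\not\cong\Sym^2f_j$ is itself that theorem, not just strong multiplicity one, and only infinitely many such primes, not positive density, are needed.
\item At such a prime, $E_{i,r}(1)/E_{i,r}(-1)$ depends on $i$ only through $a_{f_i}(r)^2$, and injectively. Hence the functions $\varepsilon\mapsto E_{i,r}(\varepsilon)$ and $\varepsilon\mapsto E_{j,r}(\varepsilon)$ are non-proportional.
\item A tensor-product argument then shows that the functions $F_i(\eta)=\prod_{r\in B}E_{i,r}(\eta_r)$, $1\le i\le d$, are linearly independent on $\{\pm1\}^B$, where $B=\{r_{ij}\}$.
\item The contributions $A_i$ of the other primes are nonzero, so at least one of the $2^{\binom d2}$ sign patterns on $B$ makes $\sum_i c_iA_iF_i\neq0$. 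Thus the conditional probability of exact vanishing, given the signs off $B$, is at most $1-2^{-\binom d2}$. By the tower property the same bound holds given the first $n$ signs, for every $n$.
\item L\'evy's upward theorem then forces $\Pr(\langle\vect c,\vect Z\rangle=0)=0$.
\end{enumerate}

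From this exact statement, $\Pr(|\langle\vect c,\vect Z\rangle|<\delta)\downarrow0$ for each fixed $\vect c$. A neighbourhood argument on the compact unit sphere makes this uniform in $\vect c$, and your tail bound makes it uniform in $y\ge y_0$. That is exactly $Q(\delta)\to0$, after which your covering argument finishes the proof.
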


For convenience, write
\begin{align*}
 L^{(1)}(f,\chi,s)&=L(f,\chi,s),&
 L^{(2)}(f,\chi,s)&=L(\Sym^2f,\chi,s),\\
 \sigma_1&=\frac{k+1}{2},&\sigma_2&=k.
\end{align*}
The uniformity in the preceding theorems gives the following result.

\begin{corollary}\label{cor:determinant}
Fix $b\in\{1,3\}$, $m\in\{1,2\}$, and $s\in\CC$ with
$\re(s)>\sigma_m$. Then
\begin{align}\label{eq:detcount}
 \#\left\{(p_1,\ldots,p_d)\in\cP_{4,b}(X)^d:
 \det(L^{(m)}(f_i,\chi_{p_j},s))_{1\leq i,j\leq d}=0\right\}
 =o(P_b(X)^d).
\end{align}
Moreover, outside a set of $o(P_b(X)^d)$ tuples, the determinants are simultaneously nonzero for any fixed finite collection of pairs $(m,s)$ with $m\in\{1,2\}$ and $\re(s)>\sigma_m$.
\end{corollary}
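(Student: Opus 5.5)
We deduce Corollary \ref{cor:determinant} from the uniform statements of Theorems \ref{thm:standard} and \ref{thm:symmetric} by induction on the number of columns, expanding the determinant along the last column. For $1\le j\le d$, write $M_j(p_1,\ldots,p_j)$ for the $d\times j$ matrix $(L^{(m)}(f_i,\chi_{p_l},s))_{i\le d,\,l\le j}$. Call a tuple $(p_1,\ldots,p_j)$ \emph{good} if $M_j$ has rank $j$. We will show by induction on $j$ that the number of bad $j$-tuples in $\cP_{4,b}(X)^j$ is $o(P_b(X)^j)$. The case $j=d$ is exactly \eqref{eq:detcount}.

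For the base case $j=1$, the column $(L^{(m)}(f_i,\chi_{p},s))_i$ vanishes only if $L^{(m)}(f_1,\chi_p,s)=0$. Taking $h=f_1$ (for $m=1$), or $\vect c=(1,0,\ldots,0)$ (for $m=2$), the relevant theorem shows that this happens for $o(P_b(X))$ primes.

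For the inductive step, suppose $(p_1,\ldots,p_{j-1})$ is good with $j\le d$. Since $M_{j-1}$ has rank $j-1<d$, there is a nonzero vector $\vect c=\vect c(p_1,\ldots,p_{j-1})\in\CC^d$ orthogonal to its columns, i.e.\ $\sum_i c_iL^{(m)}(f_i,\chi_{p_l},s)=0$ for $l<j$. If $(p_1,\ldots,p_j)$ is bad, then the new column lies in the span of the previous ones, so $\sum_i c_iL^{(m)}(f_i,\chi_{p_j},s)=0$. For $m=1$ this says $L(h,\chi_{p_j},s)=0$ with $h=\sum c_if_i\ne0$, since the $f_i$ are linearly independent. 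For $m=2$ it is the condition in \eqref{eq:symuniform}.

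The key point is uniformity. Given $\epsilon>0$, the $X_0$ in Theorem \ref{thm:standard} or \ref{thm:symmetric} does not depend on $\vect c$. So for $X\ge X_0$ and every good $(j-1)$-tuple, at most $\epsilon P_b(X)$ primes $p_j$ make the extended tuple bad. Every bad $j$-tuple either has a bad prefix or arises in this way, so the number of bad $j$-tuples is at most
\begin{align*}
o(P_b(X)^{j-1})\cdot P_b(X)+P_b(X)^{j-1}\cdot\epsilon P_b(X).
\end{align*}
This gives $\limsup$ of the count divided by $P_b(X)^j$ at most $\epsilon$ for every $\epsilon>0$, completing the induction. This step is the only real content. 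Without the supremum over $\vect c$ in the theorems, the vector depending on the earlier primes would make the fibrewise bound unusable, so the uniformity is exactly what is needed. Note that the count here is over ordered tuples, possibly with repeated primes; any tuple with a repeated prime is bad automatically, and there are only $O(P_b(X)^{d-1})$ such tuples, so they are harmless.

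For the final assertion, fix finitely many pairs $(m_1,s_1),\ldots,(m_N,s_N)$. By the above, each exceptional set $E_t$ of tuples with vanishing determinant for $(m_t,s_t)$ has size $o(P_b(X)^d)$. A finite union of such sets is still $o(P_b(X)^d)$, and outside $\bigcup_t E_t$ all $N$ determinants are simultaneously nonzero.
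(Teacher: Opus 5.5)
Your argument is correct and is essentially the paper's proof. The paper sorts singular tuples by the first index $j$ at which the column for $p_j$ falls into the span of the earlier, independent columns, and bounds each class by $P_b(X)^{j-1}\cdot o(P_b(X))\cdot P_b(X)^{d-j}$ using the uniformity in $\vect c$; this is exactly your inductive fibrewise count, and the finite-union step for several pairs $(m,s)$ is the same (the paper handles the first column by noting each entry is a nonzero Euler product, rather than applying the theorem with $\vect c=(1,0,\ldots,0)$).
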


\begin{remark}\label{rem:scope}
All the results above also hold with $\cP_{4,b}(X)$ replaced by $\mathcal{P}_{\odd}(X)$ the set of odd primes up to $X$. 
All limits are taken with the weight, normalized Hecke eigenforms, and evaluation points fixed.
\end{remark}

The proof is based on a non-concentration principle for random Euler products.   We first  introduce the related probability space. Let
$\Omega=\{-1,1\}^{\{\text{primes}\}}$ and $\varepsilon_r(\omega):=\omega_r$
 for $\omega=(\omega_r)_r\in\Omega.$
Define the probability space $(\Omega,\mathcal F,\mu)$
as follows. Let
$\mathcal F=\sigma(\varepsilon_r:r\text{ prime})$
be the $\sigma$-algebra generated by the events
$\{\varepsilon_r=1\}$, and let $\mu$ be the probability measure
given by
\begin{align*}
 \mu\left(\left\{\omega\in\Omega:
 \varepsilon_{r_j}(\omega)=\eta_j\text{ for }1\leq j\leq t\right\}\right)
 =2^{-t}
\end{align*}
for every finite collection of distinct primes
$r_1,\ldots,r_t$ and every
$(\eta_1,\ldots,\eta_t)\in\{-1,1\}^t$.
Thus, the variables $\varepsilon_r$ are independent, each taking
the values $1$ and $-1$ with probability $1/2$.

\begin{theorem}[Non-concentration for Euler products]
\label{thm:abstract}
Let $d\geq1$ and let $u_i(r)\in[-2,2]$ for every prime $r$ and
$1\leq i\leq d$. Suppose that, for each $i\neq j$, we have
\begin{align}\#\{r~{\rm prime}:u_i(r)\neq u_j(r)\}=\infty.\label{eq:abstractseparation}\end{align} Fix
$w\in\CC$ with $\re(w)>1$. Define
\begin{align*}
 Z_i&=\prod_r(1-u_i(r)\varepsilon_r r^{-w}+r^{-2w})^{-1},\\
 Y_i(p)&=\prod_r(1-u_i(r)\chi_p(r)r^{-w}
+\chi_p(r)^2r^{-2w})^{-1},
\end{align*}
where the products are over
all primes, and $r^{-w}:=\exp(-w\log r)$. Then these products converge absolutely and are nonzero. For every
$\vect c=(c_1,\ldots,c_d)\in\CC^d\setminus\{0\}$,
\begin{align}
 \mu\left(\sum_{i=1}^d c_i Z_i=0\right)=0. \label{eq:randomsignP=0}
\end{align}
Moreover, for each $b\in\{1,3\}$,
\begin{align}
 \sup_{\vect c\neq0}
 \#\left\{p\in\cP_{4,b}(X):\sum_{i=1}^d c_iY_i(p)=0\right\}
 =o(P_b(X)). \label{eq:twistP=0}
\end{align}
\end{theorem}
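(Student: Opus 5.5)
The plan has three stages. First, prove convergence and non-vanishing of each Euler factor. Second, prove \eqref{eq:randomsignP=0} by induction on the size of the support of $\vect c$, using a coordinate-flipping argument. Third, deduce \eqref{eq:twistP=0} by truncating the products, compactness in $\vect c$, and equidistribution of the sign vectors $(\chi_p(r))_{r\le y}$.

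\emph{Convergence and non-vanishing.} Since $u_i(r)\in[-2,2]$, write $1-u_i(r)\eta r^{-w}+r^{-2w}=(1-\alpha r^{-w})(1-\beta r^{-w})$ for $\eta=\pm1$, with $\alpha\beta=1$ and $\alpha+\beta=u_i(r)\eta$ real of absolute value $\le2$. Hence $|\alpha|=|\beta|=1$. With $\sigma=\re(w)>1$, each factor is therefore $1+O(r^{-\sigma})$ and bounded away from $0$, uniformly in the signs. The products converge absolutely to nonzero limits. Moreover, $|Z_i|$, $|Z_i|^{-1}$ and the tails $\prod_{r>y}$ are bounded by constants depending only on $\sigma$; the tails equal $1+O(y^{1-\sigma})$, uniformly in $\omega$ and in $p$. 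For $r=p$ the twisted factor is $1$ since $\chi_p(p)=0$, so the same bounds hold for $Y_i(p)$.

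\emph{Proof of \eqref{eq:randomsignP=0}.} We induct on $n=\#\{i:c_i\ne0\}$. The case $n=1$ holds because $Z_i\neq0$. Suppose $n\ge2$ with, say, $c_1,c_2\ne0$, and suppose $E=\{\sum c_iZ_i=0\}$ has $\mu(E)>0$.
\begin{enumerate}
\item By Lévy's zero--one law (martingale convergence), there is a cylinder set $C$, determined by finitely many coordinates, with $\mu(E\cap C)>\tfrac34\mu(C)$.
\item For a prime $r$ outside the coordinates of $C$, let $\phi_r$ be the measure-preserving map flipping $\varepsilon_r$. It preserves $C$, so $\mu(E\cap\phi_r(E)\cap C)\ge\tfrac12\mu(C)>0$.
\item By \eqref{eq:abstractseparation} we may pick such an $r$ with $u_1(r)\ne u_2(r)$, and then fix $\eta$ so that $F:=E\cap\phi_r(E)\cap\{\varepsilon_r=\eta\}$ has positive measure.
\item On $F$ we have both $\sum c_iZ_i=0$ and $\sum c_i\rho_iZ_i=0$, where
\[
\rho_i=\frac{1+u_i(r)\eta r^{-w}+r^{-2w}}{1-u_i(r)\eta r^{-w}+r^{-2w}}
\]
is a constant on $F$.
\item The map $u\mapsto\rho$ is a Möbius transformation, hence injective, so $\rho_2\ne\rho_1$.
\end{enumerate}
Subtracting gives $\sum_i c_i(\rho_i-\rho_1)Z_i=0$ on $F$. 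This coefficient vector has support contained in $\{i:c_i\neq0\}\setminus\{1\}$ and contains $2$, so it is nonzero with strictly smaller support. The induction hypothesis then says this event has measure $0$, contradicting $\mu(F)>0$.

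\emph{Uniform quantitative version.} Normalize $|\vect c|=1$. For each $\vect c$, $\mu(|\sum c_iZ_i|\le\delta)\to0$ as $\delta\to0$. Since $|\sum c_i'Z_i-\sum c_iZ_i|\le M|\vect c-\vect c'|$ with $M$ the deterministic bound on the $|Z_i|$, a finite cover of the unit sphere yields $\delta>0$ with $\sup_{\vect c}\mu(|\sum c_iZ_i|\le 2\delta)<\epsilon$. Choose $y$ so that the tail error is at most $\delta/4$. Then, writing $Z_i^{\le y}$ for the partial product over $r\le y$, we get $\sup_{\vect c}\mu(|\sum c_iZ_i^{\le y}|\le\delta)<\epsilon$ as well.

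\emph{Proof of \eqref{eq:twistP=0}.} For $p>y$ with $\sum c_iY_i(p)=0$, the truncation gives $|\sum c_iY_i^{\le y}(p)|\le\delta$. This condition depends only on the sign vector $(\chi_p(r))_{r\le y}\in\{\pm1\}^{\pi(y)}$. By quadratic reciprocity, $\chi_p(r)$ depends on $p\bmod 4r$ (and $\chi_p(2)$ on $p\bmod 8$). The Chinese remainder theorem and Dirichlet's theorem in progressions modulo $8\prod_{2<r\le y}r$ then show that, among $p\in\cP_{4,b}$, each of the $2^{\pi(y)}$ sign vectors occurs with frequency tending to $2^{-\pi(y)}$. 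This holds because the residue $b\bmod4$ does not constrain the Legendre symbols $(p/r)$ for odd $r$, nor $p\bmod 8$ beyond $b$.

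Since there are finitely many patterns, the counting error is $o(P_b(X))$ uniformly over all sets of patterns, hence uniformly in $\vect c$. This gives
\[
\#\{p\in\cP_{4,b}(X):\textstyle\sum c_iY_i(p)=0\}\le(\epsilon+o(1))P_b(X)+\pi(y)
\]
uniformly in $\vect c$, which is \eqref{eq:twistP=0}.

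I expect the main obstacle to be \eqref{eq:randomsignP=0}: justifying the density and flipping step, and making sure the reduced coefficient vector is both nonzero and constant on the set where it is used. Fixing $\varepsilon_r=\eta$ in step (iii) is exactly what makes the $\rho_i$ constant.
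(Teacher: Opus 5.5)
Your proposal is correct. Its overall architecture matches the paper's: a random-sign model, non-concentration for each fixed $\vect c$, then transfer to prime twists by truncation and equidistribution of $(\chi_p(r))_{r\le y}$. Both key steps, however, are done differently.

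\textbf{Non-concentration.} The paper takes, beyond the $n$th prime, a block $B_n$ of $\binom d2$ distinct separating primes, one for each pair $i<j$. Lemma~\ref{lem:tensors} shows the local products are linearly independent functions on $\{-1,1\}^{B_n}$. Block conditioning then gives $\mu(A\mid\mathcal G_n)\le 1-2^{-\binom d2}$, and the tower property with L\'evy's upward theorem finishes. You instead induct on the support of $\vect c$, using one separating prime per step. L\'evy gives a cylinder in which $E$ has relative density $>3/4$. Flipping one coordinate outside it keeps $E\cap\phi_r(E)$ of positive measure. Injectivity of the M\"obius map in $u$ then removes one coefficient. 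This avoids the tensor lemma and the block-conditioning formula altogether, at the price of an induction and no explicit uniform conditional bound.

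\textbf{Transfer to prime twists.} The paper proves $\nu_X\Rightarrow\nu$ on the compact set $K$ and then cites the Billingsley--Tops\o e uniformity theorem (Lemma~\ref{lem:compactness}) over the class of hyperplanes. You prove the needed uniformity by hand. Compactness of the unit sphere of coefficient vectors, together with the Lipschitz dependence of $\vect c\cdot Z$ on $\vect c$, gives a uniform small-ball bound. This reduces everything to finitely many sign patterns, where the $o(1)$ errors are automatically uniform over all subsets of patterns. Your route is self-contained and yields a stronger conclusion: uniformly in unit $\vect c$, few $p$ have $|\vect c\cdot Y(p)|$ small, not merely zero. That is closer to what an effective version, as in the first question of Section~\ref{sect:discussion}, would need. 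The paper's route instead packages the uniformity into a citable general theorem.

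\textbf{Minor slips, all harmless.}
\begin{enumerate}
\item On $\{\varepsilon_r=\eta\}$ one has $Z_i\circ\phi_r=\rho_i^{-1}Z_i$ with your $\rho_i$, so the second relation should read $\sum_i c_i\rho_i^{-1}Z_i=0$. The argument is unchanged, since $u\mapsto\rho^{-1}$ is also an injective M\"obius map.
\item ``Frequency tending to $2^{-\pi(y)}$'' needs the prime number theorem in progressions to a fixed modulus, not just Dirichlet's theorem. The paper supplies this via \eqref{eq:SiegelWalfisz}.
\item Your Lipschitz constant and tail budget should carry a factor such as $\sqrt d$.
\end{enumerate}
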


We now sketch the idea of the proof of Theorem \ref{thm:abstract}.
The key observation underlying this probabilistic theorem is, in fact, from the theory of modular forms. Ramakrishnan's result \cite{DukeKowalski2000} recalled in Proposition~\ref{prop:separationthm} implies that if $f,g\in\HH_k$ are distinct, then $a_f(r)^2\neq a_g(r)^2$ for infinitely many
primes $r$; see Lemma \ref{lem:separation}.
Writing $\lambda_f(r)=a_f(r)r^{-(k-1)/2}$, Lemma \ref{lem:separation} shows that both choices
$u_i(r)=\lambda_{f_i}(r)$ and
$u_i(r)=\lambda_{f_i}(r)^2-2$ satisfy \eqref{eq:abstractseparation}.  These choices correspond to the standard and symmetric square families, respectively. Deligne's bound \eqref{eq:deligne} ensures that both choices of
$u_i(r)$ lie in $[-2,2]$, as required in Theorem \ref{thm:abstract}.

The assumption \eqref{eq:abstractseparation} allows us to prove \eqref{eq:randomsignP=0} as follows.  Suppose $d\geq2$ and fix $R>0$.
We can choose pairwise distinct primes $r_{ij}>R$ such that
$u_i(r_{ij})\neq u_j(r_{ij})$ for every $i<j$.
Set $B=\{r_{ij}:i<j\}$, so $|B|=\binom d2$, and put
\[
F_i=\prod_{r\in B}
(1-u_i(r)\varepsilon_r r^{-w}+r^{-2w})^{-1}.
\]
Lemma~\ref{lem:tensors} shows that the $F_i$ are linearly independent
as functions of $(\varepsilon_r)_{r\in B}$.
After fixing all signs outside $B$, we have $Z_i=D_iF_i$,
where each $D_i$ is a nonzero constant by
Lemma~\ref{lem:uniformproduct}.
For fixed coefficients $c_i$ that are not all zero, at least one
of the $2^{|B|}$ equally likely choices of $(\varepsilon_r)_{r\in B}$
therefore makes $\sum_i c_iZ_i$ nonzero.
Its conditional probability of vanishing is thus at most
$1-2^{-\binom d2}$.
Since every prime in $B$ exceeds $R$, the tower property gives
the same bound after conditioning on $(\varepsilon_r)_{r\leq R}$.
As $R\to\infty$, these conditional probabilities converge almost
surely to the indicator of the vanishing event. This indicator
is therefore strictly less than $1$ almost surely, proving
\eqref{eq:randomsignP=0}; see
Proposition~\ref{prop:non-concentration}.

To pass to prime twists, let $\nu$ be the distribution of $Z$ and
$\nu_X$ the distribution of $Y(p)$ when $p$ is chosen uniformly
from $\cP_{4,b}(X)$; see \eqref{eq:distributionofZ} and
\eqref{eq:distributionofY(p)}.
Lemma~\ref{lem:signs} shows that, at any fixed finite set of primes,
the joint distribution of the character values approaches that
of the corresponding independent variables $\varepsilon_r$.
The uniform tail estimate in Lemma~\ref{lem:uniformproduct}
then extends this convergence from the finite Euler products
to the full products, giving $\nu_X\Longrightarrow\nu$.
By \eqref{eq:randomsignP=0}, we have $\nu(H)=0$ for every complex
linear hyperplane $H$ through the origin.
Lemma~\ref{lem:compactness} therefore yields
$\sup_H\nu_X(H)\to0$, proving \eqref{eq:twistP=0}.

The paper is organized as follows. In Section~\ref{sect:preliminaries}, we give the arithmetic preliminaries. Sections~\ref{sect:random} and \ref{sect:proofs} prove Theorem \ref{thm:abstract} and the main density results, Theorems \ref{thm:standard} and \ref{thm:symmetric}. Sections \ref{sect:kernels} and \ref{sect:applications} are dedicated to the applications of Theorems \ref{thm:standard} and \ref{thm:symmetric}. We recall the kernel functions for the twisted $L$-function and the twisted symmetric square $L$-function, thereby giving spanning families of the space of level-one cusp forms $S_k$ that differ from the classical ones. In particular, the traces of the Rankin-Cohen bracket of Eisenstein series at prime levels constructed in \cite{Ni2026} give rational bases  for a density-one set of admissible prime tuples; see Corollary \ref{cor:rationalbases}. In Section \ref{sect:explicitbases}, we make explicit the conditions on the primes that give a basis for the cases $d=2,3$; see Theorem \ref{thm:explicitbases}. Finally, in Section \ref{sect:discussion}, we conclude our paper by proposing several questions for future work.

\section{Preliminaries}\label{sect:preliminaries}

\subsection{Euler products}
Let $f(z)=\sum_{n\geq1}a_f(n)q^n\in\HH_k$. By Deligne's theorem \cite{Deligne1974},
\begin{align}\label{eq:deligne}
 |\alpha_f(r)|=|\beta_f(r)|=r^{(k-1)/2}
\end{align}
for every prime $r$. Note that
$a_f(r)$ is real. Define
\begin{align}\label{eq:normalizedcoeff}
 \lambda_f(r)=\frac{a_f(r)}{r^{(k-1)/2}},
 \qquad b_f(r)=\lambda_f(r)^2-2.
\end{align}
Then $\lambda_f(r),b_f(r)\in[-2,2]$.

For $\re(s)>(k+1)/2$, put $w=s-(k-1)/2$. The Euler product of $L(f,\chi_p,s)$ is
\begin{align}\label{eq:standardproduct}
 L(f,\chi_p,s)
 =\prod_r(1-\lambda_f(r)\chi_p(r)r^{-w}
                 +\chi_p(r)^2r^{-2w})^{-1}.
\end{align}
For $\re(s)>k$, put $w=s-k+1$.  Note that \begin{align}\alpha_f(r)\beta_f(r)=r^{k-1},\quad\alpha_f(r)^2+\beta_f(r)^2=a_f(r)^2-2r^{k-1}.\label{eq:coeffquadratic}\end{align} 
From \eqref{eq:symdefinition} and \eqref{eq:coeffquadratic}, we obtain
\begin{align}\label{eq:reducedsym}
 L(\Sym^2f,\chi_p,s)
 =&
 L(w,\chi_p)\prod_r(1-b_f(r)\chi_p(r)r^{-w}
                 +\chi_p(r)^2r^{-2w})^{-1}.
\end{align}
where $L(s,\chi_p)$ denotes the Dirichlet $L$-function, and $L(w,\chi_p)\neq0$ since $\re(w)>1$.  Note that all local factors in the
above Euler products are $1$ at $r=p$.

\subsection{Separation of Hecke eigenvalues}
We recall the corollary to Theorem~A in Ramakrishnan's appendix to
\cite{DukeKowalski2000}. Write $\mathcal H_k^{\New}(N,\omega)$
for the set of normalized newforms of weight $k$, level $N$,
and character $\omega$.
\begin{proposition}\label{prop:separationthm}
Let $f\in\mathcal H_k^{\New}(N,\omega)$ and
$g\in\mathcal H_k^{\New}(N',\omega)$ have the same weight and
character. Suppose that
\begin{align*}
 a_f(p)^2=a_g(p)^2
\end{align*}
for all primes $p$ outside a set $S$ of Dirichlet density
$\delta(S)<1/18$. Then there exists a quadratic character $\chi$
of conductor dividing $NN'$ such that
\begin{align*}
 a_f(p)=a_g(p)\chi(p)\qquad(p\nmid NN').
\end{align*}
If, in addition, $\omega$ is trivial and $N,N'$ are squarefree,
then $f=g$.
\end{proposition}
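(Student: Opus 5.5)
The proposition is Ramakrishnan's result from the appendix to \cite{DukeKowalski2000}. The plan is to reduce the hypothesis to an equality of Gelbart--Jacquet symmetric square lifts, derive that equality from a density argument in which the constant $1/18$ arises, and then pass back to $f$ and $g$ via the theorem that the adjoint lift determines a form up to twist.

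\textbf{Step 1: reformulation on $\mathrm{GL}_3$.} Let $\pi,\pi'$ be the cuspidal representations of $\mathrm{GL}_2(\mathbb A)$ attached to $f,g$, normalized to be unitary. Write $\lambda_f(p)=a_f(p)p^{-(k-1)/2}$. For $p\nmid NN'$, the trace of the Satake parameter of $\Sym^2\pi$ at $p$ is
\[
x_p=\lambda_f(p)^2-\omega(p),
\]
and similarly $y_p=\lambda_g(p)^2-\omega(p)$ for $\Sym^2\pi'$. Because $f$ and $g$ have the same weight and character, the hypothesis $a_f(p)^2=a_g(p)^2$ becomes $x_p=y_p$ for $p\notin S$. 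By Gelbart--Jacquet, $\Pi=\Sym^2\pi$ and $\Pi'=\Sym^2\pi'$ are isobaric automorphic representations of $\mathrm{GL}_3$. Deligne's bound gives $|x_p|,|y_p|\leq 3$, so $|x_p-y_p|^2\leq 36$.

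\textbf{Step 2: proving $\Pi\cong\Pi'$.} For $\sigma\to1^+$, consider
\[
\sum_p |x_p-y_p|^2p^{-\sigma}.
\]
Expand $|x_p-y_p|^2=|x_p|^2+|y_p|^2-2\re(x_p\bar y_p)$. By Rankin--Selberg theory (Jacquet--Shalika), the sum is asymptotic to
\[
\bigl(m(\Pi\times\tilde\Pi)+m(\Pi'\times\tilde\Pi')-2\re\, m(\Pi\times\tilde\Pi')\bigr)\log\tfrac{1}{\sigma-1},
\]
where $m$ denotes the order of the pole at $s=1$.

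On the other hand, only $p\in S$ contribute, and each term is at most $36$. So the sum is at most $(36\,\delta(S)+o(1))\log\frac1{\sigma-1}$, which is less than $2\log\frac1{\sigma-1}$ because $\delta(S)<1/18$.

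In the cuspidal (non-CM) case, suppose $\Pi\not\cong\Pi'$. Then $m(\Pi\times\tilde\Pi)=m(\Pi'\times\tilde\Pi')=1$ and $m(\Pi\times\tilde\Pi')=0$, so the left side is $2\log\frac1{\sigma-1}$, a contradiction. Hence $\Pi\cong\Pi'$.

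\textbf{Step 3: the CM case (expected main obstacle).} When $f$ or $g$ has CM, the lift $\Pi$ or $\Pi'$ is an isobaric sum rather than cuspidal. The multiplicities then satisfy
\[
m(\Pi\times\tilde\Pi)+m(\Pi'\times\tilde\Pi')-2\re\,m(\Pi\times\tilde\Pi')=\sum_\tau(a_\tau-b_\tau)^2,
\]
where $a_\tau,b_\tau$ are the multiplicities of the cuspidal constituents $\tau$ of $\Pi,\Pi'$. I would first try to show this sum is at least $2$ whenever $\Pi\not\cong\Pi'$. It is nonzero in that case. The risk is the value $1$: it would require $\Pi$ and $\Pi'$ to differ in a single constituent, which is impossible when both have degree $3$. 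A difference by a swap gives at least $2$. This constituent bookkeeping, including the mixed case where exactly one of $f,g$ is CM, is where I expect the real work.

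\textbf{Step 4: from $\Sym^2$ to a twist.} Equality of symmetric squares gives $\mathrm{Ad}(\pi)\cong\mathrm{Ad}(\pi')$. Ramakrishnan's theorem on $\mathrm{SL}(2)$ multiplicity one then gives $\pi'\cong\pi\otimes\chi$ for some Hecke character $\chi$. Comparing central characters gives $\omega=\omega\chi^2$, so $\chi^2=1$. A local conductor comparison at each prime shows that $\chi$ is unramified outside $NN'$, so its conductor divides $NN'$. This gives $a_f(p)=a_g(p)\chi(p)$ for $p\nmid NN'$.

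\textbf{Step 5: trivial character and squarefree levels.} Now let $\omega$ be trivial and $N,N'$ squarefree. At each prime $\ell\mid N$, the local representation $\pi_\ell$ is an unramified twist of Steinberg, and at $\ell\nmid N$ it is unramified. Twisting by a character ramified at $\ell$ produces local conductor exponent $2$ in either case. Since $\pi'$ has squarefree level, $\chi$ must be unramified everywhere, hence trivial. Strong multiplicity one then gives $f=g$.
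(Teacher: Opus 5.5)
Your outline is correct. The paper gives no proof of its own; it imports the result from Ramakrishnan's appendix to \cite{DukeKowalski2000}, and your route is the argument behind that result: Gelbart--Jacquet lifts to $\mathrm{GL}_3$, the Rankin--Selberg density bound $1/(2\cdot 3^2)=1/18$, and then Ramakrishnan's theorem that $\Sym^2\pi\cong\Sym^2\pi'$ forces $\pi'\cong\pi\otimes\chi$.

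Your Step~3 is not an obstacle. Integers $d_\tau=a_\tau-b_\tau$, not all zero, with $\sum_\tau d_\tau\deg\tau=0$ must have at least two nonzero entries. Hence $\sum_\tau d_\tau^2\geq 2$, uniformly in the non-CM, CM and mixed cases, and this also covers the cuspidal case of Step~2.

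The one step stated too quickly is the conductor claim in Step~4. Knowing that $\chi$ is unramified outside $NN'$ does not control the $2$-part of the conductor of a quadratic $\chi$, which is $4$ or $8$ when ramified. You need the local bound $a(\chi_\ell)\leq a(\pi_\ell)+a(\pi'_\ell)$. For principal series this follows from $a(\chi)\leq\max(a(\mu_1),a(\mu_1\chi))$; there is an analogous estimate for twists of Steinberg, and for supercuspidals it follows from $a(\pi_\ell)\geq 2$. This gap does not affect the formula $a_f(p)=a_g(p)\chi(p)$ or the squarefree conclusion.
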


We have the following immediate consequence of Ramakrishnan's theorem.  

\begin{lemma}\label{lem:separation}
If $f,g\in\HH_k$ are distinct, then
\begin{align*}
 a_f(r)^2\neq a_g(r)^2
\end{align*}
for infinitely many primes $r$. Consequently, each of the inequalities
$\lambda_f(r)\neq\lambda_g(r)$ and $b_f(r)\neq b_g(r)$ holds for
infinitely many primes.
\end{lemma}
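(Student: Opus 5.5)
The plan is to argue by contradiction, using Proposition~\ref{prop:separationthm} with level $N=N'=1$ and trivial character. Suppose $f,g\in\HH_k$ and $a_f(r)^2=a_g(r)^2$ for all primes $r$ outside a finite set $S$. A finite set of primes has Dirichlet density $0<1/18$. Both $f$ and $g$ are normalized newforms of weight $k$, level $1$ and trivial character, so they belong to $\mathcal H_k^{\New}(1,\mathbf 1)$. Level $1$ is squarefree and the character is trivial, so the last sentence of Proposition~\ref{prop:separationthm} gives $f=g$. Hence for distinct $f,g$ the set of primes with $a_f(r)^2\neq a_g(r)^2$ cannot be finite, which proves the first assertion.

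The consequences follow directly from the definitions \eqref{eq:normalizedcoeff}. Dividing by the positive number $r^{k-1}$, we get $a_f(r)^2\neq a_g(r)^2$ if and only if $\lambda_f(r)^2\neq\lambda_g(r)^2$. Subtracting $2$, this holds if and only if $b_f(r)\neq b_g(r)$. Moreover, $\lambda_f(r)^2\neq\lambda_g(r)^2$ forces $\lambda_f(r)\neq\lambda_g(r)$. So both inequalities hold at every one of the infinitely many primes produced in the first step.

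There is no real obstacle here. The only point to check is that the hypotheses of Ramakrishnan's result apply in level one: every element of $\HH_k$ is a newform of level $1$ (there are no oldforms), the character is trivial, and $1$ counts as squarefree. The statement we need uses only a finite exceptional set, which is much weaker than the density $<1/18$ allowed by the proposition.
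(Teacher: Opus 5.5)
Your proposal is correct and follows essentially the same approach as the paper: the paper also argues by contradiction, applying Proposition~\ref{prop:separationthm} to a finite exceptional set to conclude $f=g$, and then reads off the assertions for $\lambda_f(r)$ and $b_f(r)$ from \eqref{eq:normalizedcoeff}. You simply make explicit the level-one, trivial-character, density-zero checks and the equivalences that the paper leaves implicit.
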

\begin{proof}
Suppose that $a_f(r)^2=a_g(r)^2$ for all but finitely many primes $r$.
Then we must have $f=g$ by Proposition \ref{prop:separationthm}, a contradiction. The second assertion is from \eqref{eq:normalizedcoeff}.
\end{proof}

\subsection{Distribution of quadratic character values}
For positive integers $m$ and $a$ such that $\gcd(m, a)=1$, let $\pi(X;m, a)$ be the counting function
\begin{align*}
    \pi(X;m, a):=\#\{ p\leq X~|~p\equiv a\pmod m\}.
\end{align*}
Recall the Siegel-Walfisz theorem (see e.g., \cite[p.~124]{IwaniecKowalski2004}):
\begin{align}\pi(X;m,a)=\frac{\Li(X)}{\varphi(m)}+o(\Li(X)), \label{eq:SiegelWalfisz}
\end{align}
where $\Li$ denotes the logarithmic integral, $\Li(X)\sim X/\log X$, and $o(\Li(X))/\Li(X)\rightarrow 0$ as $X\rightarrow\infty$. We need the following simple counting lemma. 
\begin{lemma}\label{lem:signs}
Fix $b\in\{1,3\}$, distinct primes $r_1,\ldots,r_t$, and signs
$\eta_1,\ldots,\eta_t\in\{-1,1\}$. Then
\begin{align}\label{eq:signequidistribution}
 \lim_{X\longrightarrow\infty}
 \frac{\#\{p\in\cP_{4,b}(X):\chi_p(r_j)=\eta_j\ (1\leq j\leq t)\}}
      {P_b(X)}=2^{-t}.
\end{align}
\end{lemma}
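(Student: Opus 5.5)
The plan is to convert each condition $\chi_p(r_j)=\eta_j$ into a congruence condition on $p$, and then count primes in the resulting residue classes with the Siegel--Walfisz theorem \eqref{eq:SiegelWalfisz}.

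\emph{Step 1: reduction to congruences.} We may discard the finitely many $p\in\{r_1,\ldots,r_t\}$ and assume $p\nmid 2r_1\cdots r_t$. Quadratic reciprocity expresses each value $\chi_p(r_j)=\legendre{r_j}{p}$ through $p$ modulo a fixed modulus:
\[
\legendre{r_j}{p}=\legendre{p}{r_j}(-1)^{\frac{p-1}{2}\frac{r_j-1}{2}}\quad (r_j \text{ odd}),
\qquad
\legendre{2}{p}=(-1)^{(p^2-1)/8}.
\]
With $p\equiv b\pmod 4$ fixed, the sign factor is determined by $b$. Hence $\chi_p(r_j)$ depends only on $p\bmod r_j$ when $r_j$ is odd, and only on $p\bmod 8$ when $r_j=2$. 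Set $M=8\prod_{r_j\ \text{odd}}r_j$. The set of $p\in\cP_{4,b}$ satisfying all $t$ conditions is then a union of reduced residue classes modulo $M$ that lie in the class $b\bmod 4$.

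\emph{Step 2: counting the classes.} By the Chinese remainder theorem, reduced classes mod $M$ in the class $b \bmod 4$ correspond to a pair (class mod $8$ lying over $b$) $\times$ (nonzero class mod each odd $r_j$), and the conditions split factor by factor.
\begin{itemize}
\item For an odd $r_j$, the condition $\legendre{p}{r_j}=\pm\eta_j$, with the sign fixed by $b$, selects exactly $(r_j-1)/2$ of the $r_j-1$ nonzero classes, i.e.\ half of them.
\item If some $r_j=2$: of the two classes mod $8$ lying over $b$ (namely $\{1,5\}$ or $\{3,7\}$), exactly one has $(-1)^{(p^2-1)/8}=\eta_j$, again half.
\end{itemize}
So the admissible classes form exactly a $2^{-t}$ fraction of the $\varphi(M)/2$ reduced classes mod $M$ lying over $b$.

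\emph{Step 3: applying Siegel--Walfisz.} Each reduced class mod $M$ contains $\Li(X)/\varphi(M)+o(\Li(X))$ primes up to $X$ by \eqref{eq:SiegelWalfisz}. Also $P_b(X)=\Li(X)/2+o(\Li(X))$ by the same theorem with modulus $4$. Summing over the finitely many admissible classes and dividing gives the limit
\[
\frac{2^{-t}\varphi(M)/2}{\varphi(M)}\cdot 2=2^{-t}.
\]

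The only real obstacle is the bookkeeping in Step 1: checking that the reciprocity sign is constant on $\cP_{4,b}$, and that the prime $2$ is handled through the mod-$8$ class while staying compatible with $b$. The independence needed for the product $2^{-t}$ follows from the CRT factorization, because the conditions involve coprime moduli, except that $8$ and the mod-$4$ restriction interact only at the prime $2$.
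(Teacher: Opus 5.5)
Your proposal is correct and follows essentially the same argument as the paper. Quadratic reciprocity, with the sign fixed by $p\equiv b\pmod 4$ and the prime $2$ handled modulo $8$, turns the conditions into $\varphi(M)/2^{t+1}$ reduced residue classes modulo $M=8\prod_{r_j\ \mathrm{odd}}r_j$; the Siegel--Walfisz asymptotic applied to these classes and to $P_b(X)$ then yields the limit $2^{-t}$.
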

\begin{proof}
Let $N(X)$ denote the numerator in \eqref{eq:signequidistribution}. We may ignore the finitely many primes $p$ among  $r_1,\dots,r_t$ since they don't affect the limit. 
For an odd $r_j$, quadratic reciprocity gives
\begin{align*}
    \chi_p(r_j)=(-1)^{\frac{r_j-1}{2}\frac{p-1}{2}}\legendre{p}{r_j}.
\end{align*}
Since $p\equiv b\pmod 4$, $(-1)^{\frac{r_j-1}{2}\frac{p-1}{2}}$ is fixed. Thus, to make $\chi_p(r_j)=\eta_j$, $p$ must lie in a residue class modulo $r_j$ such that $\legendre{\cdot}{r_j}$ has the required sign. Note that there are $(r_j-1)/2$ such nonzero residue classes modulo $r_j$.
For $r_j=2$, we have 
\begin{align*}
    \chi_p(2)=(-1)^{\frac{p^2-1}{8}}=\begin{cases}1& p\equiv1,7\pmod 8,\\ -1&p\equiv 3,5\pmod 8.\end{cases}
\end{align*}
Note that $p\equiv b\pmod 4$ gives $p\equiv b\pmod8$ or $p\equiv b+4 \pmod 8$, and each residue class gives different values of $\chi_p(2)$. Thus, to make $\chi_p(2)=\eta_j$, $p$ needs to lie in exactly one residue class modulo $8$.

Now, we write $\delta=1$ if some $r_j=2$ and $\delta=0$ otherwise. Then $t=\delta+\#\{j:r_j\neq2\}$. Let
$M=8\prod_{r_j\neq 2}r_j.$
By the Chinese remainder theorem and the argument above, there are
\begin{align*}
    2^{1-\delta}\prod_{r_j\neq2}\frac{r_j-1}{2}=\frac{\varphi(M)}{2^{t+1}}
\end{align*}
different residue classes modulo $M$ satisfying all conditions. Denote by $\mathcal{C}$ the set of these $\varphi(M)/2^{t+1}$ residue classes modulo $M$. Summing over  $\mathcal{C}$ and applying \eqref{eq:SiegelWalfisz}, we get 
\begin{align*}
N(X)&=\sum_{a\in\mathcal{C}}\pi(X;M,a)=|\mathcal{C}|\left(\frac{\Li(X)}{\varphi(M)}+o(\Li(X))\right)=2^{-(t+1)}\Li(X)+o(\Li(X)),
\end{align*}
which implies that
\begin{align*}
\lim_{X\rightarrow\infty}\frac{N(X)}{P_b(X)}&=\lim_{X\rightarrow\infty}\frac{2^{-(t+1)}\Li(X)+o(\Li(X))}{\frac{\Li(X)}{\varphi(4)}+o(\Li(X))}=2^{-t},
\end{align*}
as desired.
\end{proof}

\section{A non-concentration result for Euler products}\label{sect:random}

\subsection{Uniform convergence of the Euler products}
Let $u_i(r)\in[-2,2]$, where $r$ runs over the primes and
$1\leq i\leq d$, and assume that
\begin{align}
 \#\{r:u_i(r)\neq u_j(r)\}=\infty\qquad(i\neq j).
\end{align}
Fix $w\in\CC$ with $\sigma:=\re(w)>1$. We first prove a convergence
statement that applies to both $Z_i$ and $Y_i(p)$ in Theorem~\ref{thm:abstract}.
For $e\in\{-1,0,1\}$, put
\begin{align}\label{eq:localV}
 V_{i,r}(e)=(1-u_i(r)e r^{-w}+e^2r^{-2w})^{-1}.
\end{align}
For $\vect e=(e_r)_r\in\{-1,0,1\}^{\{\text{primes}\}}$, write
\begin{align}\label{eq:abstractY}
 Y_i(\vect e)=\prod_rV_{i,r}(e_r),\qquad
 Y(\vect e)=(Y_1(\vect e),\ldots,Y_d(\vect e)).
\end{align}
Then taking $e_r=\varepsilon_r$ gives the random
variable $Z_i$ in Theorem~\ref{thm:abstract}, since $\varepsilon_r^2=1$; 
taking $e_r=\chi_p(r)$ gives $Y_i(p)$. Here, we allow
the value $0$ because $\chi_p(p)=0$. Note that $V_{i,r}(0)=1$, so
setting a coordinate equal to zero simply omits the corresponding
Euler factor.

The following lemma gives a tail estimate independent of the choice
of $\vect e$. Its proof uses only $|u_i(r)|\leq2$ and $\sigma>1$.

\begin{lemma}\label{lem:uniformproduct}
The products in \eqref{eq:abstractY} converge absolutely and uniformly
in $\vect e$, and their values are nonzero. The set
\begin{align}
 K=Y\left(\{-1,0,1\}^{\{{\rm primes}\}}\right) \label{eq:compactK}
\end{align}
is a compact subset of $(\CC^\times)^d$. Moreover, for $R\geq2$,
\begin{align}\label{eq:uniformtail}
 \sup_{\vect e}\left|Y_i(\vect e)-\prod_{r\leq R}V_{i,r}(e_r)\right|
 \ll_w\sum_{r>R}r^{-\sigma}
 \qquad(1\leq i\leq d).
\end{align}
\end{lemma}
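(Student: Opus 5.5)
The proof rests on one elementary estimate: every local factor is uniformly close to $1$. For $e\in\{-1,0,1\}$ put $x_{i,r}(e)=u_i(r)e\,r^{-w}-e^2r^{-2w}$, so that $V_{i,r}(e)=(1-x_{i,r}(e))^{-1}$. Since $|u_i(r)|\le 2$ and $|e|\le1$,
\begin{align*}
 |x_{i,r}(e)|\le 2r^{-\sigma}+r^{-2\sigma}\le 3r^{-\sigma},
\end{align*}
and this bound does not depend on $e$ or $i$. The sum $\sum_r r^{-\sigma}$ converges because $\sigma>1$. Choose $R_0$ (depending only on $w$) such that $3r^{-\sigma}\le1/2$ for all $r>R_0$.

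For those primes we may take the principal logarithm, and
\begin{align*}
 \log V_{i,r}(e)=-\log(1-x_{i,r}(e)),\qquad |\log V_{i,r}(e)|\le 2|x_{i,r}(e)|\le 6r^{-\sigma}.
\end{align*}
Hence $\sum_{r>R_0}\log V_{i,r}(e_r)$ converges absolutely, uniformly in $\vect e$, by the Weierstrass M-test. Exponentiating shows that the tail product $\prod_{r>R_0}V_{i,r}(e_r)$ converges absolutely and is nonzero. Its modulus lies between $\exp(-6S)$ and $\exp(6S)$, where $S=\sum_r r^{-\sigma}$.

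The finitely many factors with $r\le R_0$ also need to be nonzero and bounded. Each is the reciprocal of $1-x_{i,r}(e)$, so we need $1-x_{i,r}(e)\ne0$. For $e=0$ the factor is $1$. For $e=\pm1$ we have $1-x=(1-\alpha r^{-w})(1-\beta r^{-w})$ with $\alpha\beta=1$ and $\alpha+\beta=\pm u_i(r)\in[-2,2]$. Then $\alpha,\beta$ have modulus $1$: they are complex conjugates on the unit circle, or equal to $\pm1$. Since $|r^{-w}|<1$, both factors are nonzero. So each $V_{i,r}(e)$ is a nonzero finite number, and there are only finitely many possible values, indexed by $i$, $r\le R_0$ and $e$. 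Therefore $|Y_i(\vect e)|$ is bounded above and below by positive constants depending only on $w$ (and $R_0$), uniformly in $\vect e$. The same factorization also gives $|V_{i,r}(e)|\le(1-r^{-\sigma})^{-2}$ for every $r$. This avoids any dependence on the particular $u_i$, so the constants depend on $w$ alone.

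For the tail estimate \eqref{eq:uniformtail}, write $P_R=\prod_{r\le R}V_{i,r}(e_r)$ and $T_R=\prod_{r>R}V_{i,r}(e_r)$, so $Y_i-P_R=P_R(T_R-1)$. We already have $|P_R|\le \prod_r(1-r^{-\sigma})^{-2}\ll_w1$. For $R\ge R_0$,
\begin{align*}
 |T_R-1|\le \exp\Bigl(\sum_{r>R}6r^{-\sigma}\Bigr)-1\ll_w\sum_{r>R}r^{-\sigma}.
\end{align*}
For $2\le R<R_0$, both sides of \eqref{eq:uniformtail} are bounded above and below by constants depending on $w$, so the estimate holds trivially after enlarging the implied constant.

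For compactness, give $\{-1,0,1\}^{\{\text{primes}\}}$ the product topology, which is compact by Tychonoff's theorem. Each finite product $P_R(\vect e)$ depends on finitely many coordinates and is therefore continuous. By \eqref{eq:uniformtail}, $Y$ is a uniform limit of continuous maps, hence continuous. So $K$ is compact in $\CC^d$. By the uniform lower bound on $|Y_i|$, $K$ lies in $(\CC^\times)^d$ and stays away from the coordinate hyperplanes, so it is also compact there.

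The only mildly delicate point is the nonvanishing of the small-$r$ factors, which the unit-root factorization handles. Everything else is routine.
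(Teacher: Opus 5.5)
Your proof is correct and follows essentially the same route as the paper. Both rely on the unit-circle factorization of the local factor, which gives $(1+r^{-\sigma})^{-2}\le|V_{i,r}(e)|\le(1-r^{-\sigma})^{-2}$ for every $r$, and both finish with Tychonoff plus uniform limits for the compactness of $K$. The paper bounds $|V_{i,r}(e)-1|\le C_\sigma r^{-\sigma}$ for all primes and runs a uniform Cauchy estimate on partial products, so it needs no logarithms, no cutoff $R_0$, and no separate case $2\le R<R_0$.
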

\begin{proof}
Since $u_i(r)\in[-2,2]$, write $u_i(r)=2\cos\theta_{i,r}$ with
$\theta_{i,r}\in\mathbb R$. Then
\begin{align*}
 1-u_i(r)e r^{-w}+e^2r^{-2w}
 =(1-e\exp(\mathrm{i}\theta_{i,r})r^{-w})
  (1-e\exp(-\mathrm{i}\theta_{i,r})r^{-w}),
\end{align*}
where both factors are nonzero, since $|e r^{-w}|\leq r^{-\sigma}<1$.
The triangle inequality gives
\begin{align*}
 (1+r^{-\sigma})^{-2}\leq |V_{i,r}(e)|
 \leq(1-r^{-\sigma})^{-2}.
\end{align*}
Put $a_{i,r}(e)=V_{i,r}(e)-1$. Then
\begin{align*}
 |a_{i,r}(e)|
 \leq\frac{2r^{-\sigma}+r^{-2\sigma}}{(1-r^{-\sigma})^2}
 \leq C_\sigma r^{-\sigma},
 \qquad
 C_\sigma:=\frac{2+2^{-\sigma}}{(1-2^{-\sigma})^2},
\end{align*}
uniformly in $i$ and $e$.
Since $\sigma>1$, we have
\begin{align*}
 \sum_r r^{-\sigma}\leq\sum_{n=2}^{\infty}n^{-\sigma}<\infty,
 \qquad
 \sum_r|a_{i,r}(e_r)|
 \leq C_\sigma\sum_r r^{-\sigma}=:A_\sigma<\infty.
\end{align*}
Thus, $Y_i(\vect e)=\prod_r(1+a_{i,r}(e_r)) $ converges absolutely, independently of $\vect e$. The non-vanishing of $Y_i(\vect e)$ is from the fact that
\begin{align}\label{eq:productbounds}
|Y_i(\vect e)|&\geq \prod_r(1+r^{-\sigma})^{-2}>0.
\end{align}

We now prove the uniform  estimate \eqref{eq:uniformtail}. Note that for any finite set $B$ of primes, we have
\begin{align*}\left|\prod_{r\in B} V_{i,r}(e_r)\right|&\leq\prod_{r\in B}|1+a_{i,r}(e_r)|\\&\leq\exp\left(\sum_{r\in B}|a_{i,r}(e_r)|\right)\\&\leq \exp(A_{\sigma}).\end{align*}
It follows that for $S>R$, we have
\begin{align*}
 \left|\prod_{r\leq S}V_{i,r}(e_r)
       -\prod_{r\leq R}V_{i,r}(e_r)\right|
 &\leq \exp(A_{\sigma})\sum_{R<r\leq S}|a_{i,r}(e_r)|\\
 &\leq \exp(A_{\sigma})\cdot C_\sigma\sum_{R<r\leq S}r^{-\sigma},
\end{align*}
where the final sum tends to zero
as $R\to\infty$, independently of $S$ and $\vect e$. Hence the
partial products $\prod_{r\leq R}V_{i,r}(e_r)$ form a uniformly Cauchy family. Letting $S\to\infty$
proves uniform convergence and \eqref{eq:uniformtail}.

Finally, give $\{-1,0,1\}^{\{\text{primes}\}}$ the product topology,
with each factor discrete. Then $\{-1,0,1\}^{\{\text{primes}\}}$ is compact by Tychonoff's Theorem. Each finite Euler product $\prod_{r\leq R}V_{i,r}(e_r)$ depends on only
finitely many coordinates and is therefore continuous. Uniform
convergence shows that $Y$ is continuous. Since the domain of $Y$ is compact, the image $K$ is compact. Finally, by \eqref{eq:productbounds},
$K\subseteq(\CC^\times)^d$. This completes the proof.
\end{proof}
\subsection{Random signs and conditional probabilities}
Define
\begin{align*}
 Z_i(\omega)=\prod_r V_{i,r}(\varepsilon_r(\omega)),
 \qquad Z(\omega):=(Z_1(\omega),\ldots,Z_d(\omega)).
\end{align*}
Then $Z$ is a random vector on $(\Omega,\mathcal F,\mu)$.
Its distribution on $\CC^d$ is the probability measure
\begin{align}
 \nu(E):=\mu(\{\omega\in\Omega:Z(\omega)\in E\})
 \qquad(E\subseteq\CC^d\text{ Borel}). \label{eq:distributionofZ}
\end{align}
By Lemma~\ref{lem:uniformproduct}, $Z(\omega)\in K$ for every
$\omega\in\Omega$, so $\nu(K)=1$.

For an event $A\in\mathcal F$ and a sub-$\sigma$-algebra
$\mathcal G\subseteq\mathcal F$, write
$$\mu(A\mid\mathcal G)=\EE(\mathbf1_A\mid\mathcal G),$$
which is the $\mathcal G$-measurable random variable characterized,
up to almost sure equality, by
\begin{align*}
 \int_C\mu(A\mid\mathcal G)\,d\mu=\mu(A\cap C)
 \qquad(C\in\mathcal G).
\end{align*}
Here $\mathbf1_A$ is the indicator of $A$ and $\EE$ denotes expectation.
Recall the tower property: if $\mathcal F'\subseteq\mathcal G$, then
\begin{align}\label{eq:tower}
 \EE(\mu(A\mid\mathcal G)\mid\mathcal F')
 =\mu(A\mid\mathcal F').
\end{align}

\begin{lemma}\label{lem:blockconditioning}
Let $B$ be a finite set of primes and
$\mathcal G_B=\sigma(\varepsilon_r:r\notin B)$ be the $\sigma$-algebra generated by the events $\{\varepsilon_r=1\}$ for $r\notin B$.
For $\omega\in\Omega$ and $\eta\in\{-1,1\}^B$, let
$\omega^{B,\eta}$ be obtained by replacing the coordinates of $\omega$
in $B$ by $\eta$. Then, for every $A\in\mathcal F$,
\begin{align}\label{eq:blockconditioning}
 \mu(A\mid\mathcal G_B)(\omega)
 =2^{-|B|}\sum_{\eta\in\{-1,1\}^B}
   \mathbf1_A(\omega^{B,\eta})
 \qquad\text{almost surely}.
\end{align}
\end{lemma}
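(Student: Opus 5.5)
The plan is to check directly that the right-hand side of \eqref{eq:blockconditioning}, which we denote by $\Phi(\omega)$, satisfies the two defining properties of $\mu(A\mid\mathcal G_B)$. First, $\Phi$ is $\mathcal G_B$-measurable. The map $\omega\mapsto\omega^{B,\eta}$ is measurable, because each coordinate of $\omega^{B,\eta}$ is either a coordinate $\varepsilon_r(\omega)$ with $r\notin B$ or a constant. Its output does not depend on the coordinates of $\omega$ in $B$. Hence $\mathbf 1_A(\omega^{B,\eta})$ is measurable with respect to $\sigma(\varepsilon_r:r\notin B)=\mathcal G_B$, and so is the finite average $\Phi$.

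Second, we must show $\int_C\Phi\,d\mu=\mu(A\cap C)$ for every $C\in\mathcal G_B$. We use the product structure $\Omega=\{-1,1\}^B\times\{-1,1\}^{B^c}$. Under this identification $\mu$ is the product of the uniform measure on $\{-1,1\}^B$ and the analogous Bernoulli product measure $\mu'$ on $\{-1,1\}^{B^c}$. This holds because both sides agree on cylinder sets, by the defining formula for $\mu$, and cylinder sets form a $\pi$-system generating $\mathcal F$. Write $\omega=(\xi,\omega')$ with $\xi\in\{-1,1\}^B$. Since $C\in\mathcal G_B$, we have $C=\{-1,1\}^B\times C'$ for some measurable $C'$ (a Doob–Dynkin or $\pi$-$\lambda$ argument), so $\mathbf 1_C(\omega)=\mathbf 1_{C'}(\omega')$. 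Likewise $\omega^{B,\eta}=(\eta,\omega')$. Fubini then gives
\begin{align*}
\mu(A\cap C)=\int_{C'}2^{-|B|}\sum_{\eta}\mathbf 1_A(\eta,\omega')\,d\mu'(\omega')
=\int_C\Phi\,d\mu,
\end{align*}
where the last equality holds because $\Phi(\xi,\omega')$ does not depend on $\xi$ and the $\xi$-integral has total mass one. By uniqueness of conditional expectation, $\Phi=\mu(A\mid\mathcal G_B)$ almost surely.

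The only slightly delicate point is the measure-theoretic bookkeeping. This consists of identifying $\mu$ with the product measure, and of the fact that every $C\in\mathcal G_B$ is a cylinder in the $B^c$-coordinates. For the latter, the family of sets of the form $\{-1,1\}^B\times C'$ is a $\sigma$-algebra containing the generators $\{\varepsilon_r=1\}$ for $r\notin B$. Hence it contains $\mathcal G_B$. Everything else is routine.
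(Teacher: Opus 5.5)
Your proof is correct and follows the same approach as the paper: identify $\mu$ with the product measure on $\{-1,1\}^B\times\{-1,1\}^{B^c}$, observe that every $C\in\mathcal G_B$ has the form $\{-1,1\}^B\times C'$, and verify the defining integral identity by Fubini. You also spell out the measurability of the averaged indicator and the $\pi$--$\lambda$ bookkeeping, which the paper states without detail.
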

\begin{proof}
Write $\Omega=\Omega_B\times\Omega_{B^c}$ and
$\mu=\mu_B\otimes\mu_{B^c}$, where
$\Omega_B=\{-1,1\}^B$ and $\mu_B(\{\eta\})=2^{-|B|}$.
The function
\begin{align*}
 g(\xi)=2^{-|B|}\sum_{\eta\in\Omega_B}\mathbf1_A(\eta,\xi)
\end{align*}
is measurable on $\Omega_{B^c}$.
Note that every $C\in\mathcal G_B$ has the form $\Omega_B\times D$ for a
measurable set $D\subseteq\Omega_{B^c}$, and
\begin{align*}
 \int_C g\,d\mu
 =2^{-|B|}\sum_{\eta\in\Omega_B}
   \int_D\mathbf1_A(\eta,\xi)\,d\mu_{B^c}(\xi)
 =\mu(A\cap C).
\end{align*}
Thus, $g$ has the defining property of $\mu(A\mid\mathcal G_B)$.
\end{proof}

Enumerate the primes as $r_1<r_2<\cdots$, and put
\begin{align*}
 \mathcal F_n:=\sigma(\varepsilon_{r_i}:1\leq i\leq n),
\end{align*}
which is the $\sigma$-algebra generated by the events $\{\varepsilon_{r_i}=1\}$ for $1\leq i \leq n$.
Then we have 
\begin{align}\mathcal F_n\subseteq\mathcal F_{n+1},\qquad \mathcal F=\sigma\left(\bigcup_n\mathcal F_n\right).\nonumber\end{align}
The following lemma is an immediate consequence of L\'evy's
upward theorem; see \cite[ Theorem 4.6.8]{Durrett2019}.
\begin{lemma}\label{lem:conditionalgap}
For every $A\in\mathcal F$,
\begin{align}\label{eq:conditionalconvergence}
 \EE|\mu(A\mid\mathcal F_n)-\mathbf1_A|
 \longrightarrow0,\qquad n\rightarrow\infty.
\end{align}
Consequently, if there is $\delta>0$ such that
$\mu(A\mid\mathcal F_n)\leq1-\delta$ almost surely for every $n$,
then $\mu(A)=0$.
\end{lemma}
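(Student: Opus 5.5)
The first assertion follows from L\'evy's upward theorem, and the second is a short computation from the first. The filtration $(\mathcal F_n)$ is increasing and generates $\mathcal F$, and $\mathbf 1_A$ is bounded and $\mathcal F$-measurable. So the martingale $M_n:=\mu(A\mid\mathcal F_n)=\EE(\mathbf1_A\mid\mathcal F_n)$ is uniformly integrable: it takes values in $[0,1]$ almost surely. L\'evy's upward theorem \cite[Theorem 4.6.8]{Durrett2019} then gives
\begin{align*}
M_n\to\EE\bigl(\mathbf1_A\mid\sigma\bigl(\textstyle\bigcup_n\mathcal F_n\bigr)\bigr)=\EE(\mathbf1_A\mid\mathcal F)=\mathbf1_A
\end{align*}
almost surely and in $L^1$. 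The $L^1$ convergence is exactly \eqref{eq:conditionalconvergence}. If one only wants to quote almost sure convergence, the $L^1$ statement still follows by dominated convergence, since $|M_n-\mathbf1_A|\leq1$.

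For the consequence, suppose $M_n\leq1-\delta$ almost surely for every $n$. On the event $A$ we have $\mathbf1_A=1$, so $|M_n-\mathbf1_A|=1-M_n\geq\delta$ almost surely on $A$. Hence
\begin{align*}
\EE|M_n-\mathbf1_A|\geq\int_A(1-M_n)\,d\mu\geq\delta\,\mu(A).
\end{align*}
Letting $n\to\infty$ and using \eqref{eq:conditionalconvergence} gives $\delta\,\mu(A)\leq0$, hence $\mu(A)=0$.

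An alternative route uses the almost sure limit directly. Since $M_n\leq 1-\delta$ almost surely for every $n$, the limit $\mathbf1_A$ is also at most $1-\delta$ almost surely. But $\mathbf 1_A$ takes only the values $0$ and $1$, so $\mathbf1_A=0$ almost surely, i.e.\ $\mu(A)=0$.

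There is no real obstacle here. The only points to check are that $\mathcal F=\sigma(\bigcup_n\mathcal F_n)$, which holds because every $\varepsilon_r$ is $\mathcal F_n$-measurable for $n$ large, and that the bound $M_n\leq 1-\delta$ holding almost surely (rather than pointwise) is enough. It is, because countably many null sets can be discarded.
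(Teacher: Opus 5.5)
Your proof is correct and follows the paper's approach. The paper just cites L\'evy's upward theorem \cite[Theorem 4.6.8]{Durrett2019} for \eqref{eq:conditionalconvergence}, and your deduction of $\mu(A)=0$ from $\EE|\mu(A\mid\mathcal F_n)-\mathbf1_A|\geq\delta\,\mu(A)$ (or from the almost sure limit) supplies the ``consequently'' step the paper leaves implicit.
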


\subsection{Finite separation and non-concentration}
We prove that, for every fixed
$(c_1,\ldots,c_d)\in\CC^d\setminus\{0\}$,
the event $\sum_i c_iZ_i=0$ has probability zero.
The idea is to fix all signs outside a finite set $B$ of primes
and consider the finite products
\[
 \prod_{r\in B}
 (1-u_i(r)\varepsilon_r r^{-w}+r^{-2w})^{-1},
 \qquad 1\leq i\leq d,
\]
as functions of $(\varepsilon_r)_{r\in B}$.
The following linear-algebra lemma gives a sufficient condition
for these functions to be linearly independent. 
\begin{lemma}\label{lem:tensors}
For each pair $1\leq i<j\leq d$, choose a prime $r_{ij}$, with all
$\binom d2$ primes distinct, and put $B=\{r_{ij}:i<j\}$. For every
$r\in B$ and $1\leq i\leq d$, let
$v_{i,r}:\{-1,1\}\to\CC$ be a nonzero function. Suppose that
$v_{i,r_{ij}}$ and $v_{j,r_{ij}}$ are not proportional for every $i<j$.
Then the following functions on the vectors
$\eta=(\eta_r)_{r\in B}\in\{-1,1\}^{B}$ are linearly independent:
\begin{align*}
 F_i(\eta)=\prod_{r\in B}v_{i,r}(\eta_r)
 \qquad(1\leq i\leq d).
\end{align*}
\end{lemma}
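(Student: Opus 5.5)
We identify functions on $\{-1,1\}^B$ with the tensor product $\bigotimes_{r\in B}\CC^{\{-1,1\}}$, so that $F_i=\bigotimes_{r\in B}v_{i,r}$ is a pure tensor with nonzero factors. Suppose $\sum_i c_iF_i=0$ with not all $c_i$ zero. Let $I=\{i:c_i\neq0\}$. We argue by induction on $|I|$, or equivalently by choosing a minimal nonempty dependent set, that this is impossible. If $|I|=1$, then $F_i=0$ for some $i$, which contradicts the fact that a tensor product of nonzero vectors is nonzero. So we may assume $|I|\geq2$ and that $I$ is minimal, meaning every proper subfamily of $\{F_i:i\in I\}$ is linearly independent.

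Pick $i<j$ in $I$ and set $r=r_{ij}$. Write $F_l=v_{l,r}\otimes G_l$, where $G_l=\bigotimes_{r'\in B\setminus\{r\}}v_{l,r'}\neq0$. The space $\CC^{\{-1,1\}}$ is two-dimensional, and $v_{i,r}$, $v_{j,r}$ are non-proportional, so they form a basis of it. Let $\phi$ be the linear functional on $\CC^{\{-1,1\}}$ with $\phi(v_{i,r})=0$ and $\phi(v_{j,r})=1$. Apply $\phi\otimes\mathrm{id}$ to the relation $\sum_{l\in I}c_lF_l=0$. This gives
\[
\sum_{l\in I}c_l\,\phi(v_{l,r})\,G_l=0,
\]
where the $i$-term vanishes and the $j$-term is $c_jG_j$.

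This is a relation among the restricted pure tensors $G_l$ on $\{-1,1\}^{B\setminus\{r\}}$, and its support lies in $I\setminus\{i\}$ while still containing $j$. The main obstacle is that the $G_l$ live on a smaller index set, so minimality of $I$ for the $F_l$ does not directly forbid this relation. To get around this, we lift the relation back by tensoring with a fixed nonzero vector in the $r$-slot. The cleaner route is to choose the functional differently, so as to obtain a relation among the original $F_l$ with strictly smaller support.

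Let $\psi$ be any functional with $\psi(v_{j,r})\neq0$. For each $x\in\CC^{\{-1,1\}}$, apply $\mathrm{id}\otimes\ldots$ contraction: the relation gives $\sum_l c_l\,\phi(v_{l,r})\,x\otimes G_l=0$. Taking $x=v_{i,r}$, we cannot directly compare this with $F_l$.

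So we instead induct on $|B|$ together with $d$, in the following form. We claim that if pure tensors $\bigotimes_{r\in B}v_{l,r}$, $l\in I$, satisfy a nontrivial relation, then there exist $i\neq j$ in $I$ with $v_{i,r}\parallel v_{j,r}$ for every $r\in B$. Granting the claim, the pair $i<j$ would then have $v_{i,r_{ij}}\parallel v_{j,r_{ij}}$, contradicting the hypothesis.

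We prove the claim by induction on $|B|$. For $|B|=0$, all the tensors are scalars, so any two indices work when $|I|\geq2$, and $|I|=1$ is impossible as noted above. For the inductive step, fix $r\in B$ and group the indices of $I$ into classes according to the projective class of $v_{l,r}$. Choose a basis $\{x,y\}$ of $\CC^{\{-1,1\}}$ and write $v_{l,r}=\alpha_lx+\beta_ly$. The relation becomes
\[
\sum_l c_l\alpha_lG_l=0,\qquad \sum_l c_l\beta_lG_l=0.
\]
Take a class $J$ containing some $l\in I$, and choose a functional $\phi$ that kills the common line of $\{v_{l,r}:l\in J\}$ but is nonzero on every other class. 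If some class other than $J$ meets $I$, then $\sum_{l\in I}c_l\phi(v_{l,r})G_l=0$ is a nontrivial relation whose support is $I\setminus J$. If instead all of $I$ lies in one class $J$, then $v_{l,r}=\lambda_lv$ for a fixed $v$, and the relation reduces to $\sum_l c_l\lambda_lG_l=0$, which is nontrivial with support $I$.

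In either case we have a nontrivial relation among pure tensors on $B\setminus\{r\}$. Iterating, we strip primes one at a time while shrinking the support to a subset of a single class at each removed prime. At the end the support is a subset $I'\subseteq I$ of indices that share a class at every prime. Since $|B|=0$ forces $|I'|\geq2$, we obtain two indices $i\neq j$ with $v_{i,r}\parallel v_{j,r}$ for all $r\in B$. This proves the claim and hence the lemma.

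The step needing the most care is making sure that the support never becomes empty. This holds because each surviving relation is nontrivial: the coefficients $c_l\phi(v_{l,r})$ are nonzero for $l$ outside $J$, and the coefficients $c_l\lambda_l$ are nonzero in the single-class case.
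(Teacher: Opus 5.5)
There is a genuine gap. The claim you reduce to is false, and your final argument never uses the hypothesis that the primes $r_{ij}$ are distinct, which the lemma needs. Your claim says that whenever pure tensors with nonzero factors satisfy a nontrivial relation, two of them have proportional factors in every slot. In other words, pairwise non-proportional pure tensors would be linearly independent. This already fails for $|B|=1$: take $v_{1,r}=\mathbf 1_{\{1\}}$, $v_{2,r}=\mathbf 1_{\{-1\}}$, $v_{3,r}=v_{1,r}+v_{2,r}$. Then $F_1+F_2-F_3=0$, yet no two of the $v_{l,r}$ are proportional.

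The faulty step is ``shrinking the support to a subset of a single class at each removed prime.'' In your first case the surviving support is $I\setminus J$, which is the union of \emph{all} the other classes at $r$. If the $v_{l,r}$, $l\in I$, fall into three or more projective classes, this support contains indices from different classes. That can happen in the lemma for $d\geq3$, for example at $r_{12}$. Once the $r$-slot has been contracted, you have no way left to separate those indices at $r$. In the example above, killing the class $\{1\}$ leaves $c_2+c_3=0$ on the empty index set, with support $\{2,3\}$, although $v_{2,r}\not\parallel v_{3,r}$.

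Your first idea was actually the right one, and the obstacle you identified disappears once you induct on the correct statement. Contract the slot $r_{ij}$ with $\phi$ killing $v_{i,r_{ij}}$. The new relation among the $G_l$ has support in $I\setminus\{i\}$ and still contains $j$. Moreover, every pair $\{a,b\}\subseteq I\setminus\{i\}$ still has its own separating slot $r_{ab}\neq r_{ij}$ in $B\setminus\{r_{ij}\}$, because $r_{ij}$ served only the pair $\{i,j\}$. So induction on $|I|$ closes the argument, provided the inductive hypothesis is ``each pair of indices in the support has its own distinct separating slot'' rather than minimality for the original family.

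Equivalently, as in the paper, fix $i$ and perform all the contractions at once. Use the product functional $L_i=\bigotimes_{r\in B}\ell_r$, where:
\begin{itemize}
\item for each $j\neq i$, $\ell_{r_{ij}}$ kills $v_{j,r_{ij}}$ but not $v_{i,r_{ij}}$;
\item for every other $r$, $\ell_r$ is evaluation at a point where $v_{i,r}\neq0$.
\end{itemize}
Then $L_i(F_k)=\prod_r\ell_r(v_{k,r})=0$ for $k\neq i$ and $L_i(F_i)\neq0$. Applying $L_i$ to the relation forces $c_i=0$.
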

\begin{proof}
For $d=1$, we have $B=\varnothing$ and hence $F_1\equiv1$,
so $\{F_1\}$ is linearly independent. Suppose $d\geq2$ and
$\sum_{k=1}^d a_kF_k=0$.
Fix $i$. We construct a linear functional $L_i$ such that 
$L_i(F_k)=0$ for $k\neq i$ and $L_i(F_i)\neq0$.

Write $r_{ji}=r_{ij}$. For $r=r_{ij}$ with $j\neq i$, define a
linear functional on the vector space of functions $h:\{-1,1\}\to\CC$ by
\begin{align*}
 \ell_r(h)=v_{j,r}(-1)h(1)-v_{j,r}(1)h(-1).
\end{align*}
It follows that $\ell_r(v_{j,r})=0$, and
$
 \ell_r(v_{i,r})
 =v_{j,r}(-1)v_{i,r}(1)-v_{j,r}(1)v_{i,r}(-1)\neq0$
since $v_{i,r}$ and $v_{j,r}$ are not proportional. At every remaining coordinate $r\in B$, choose
$e_r\in\{-1,1\}$ with $v_{i,r}(e_r)\neq0$ and set
$\ell_r(h)=h(e_r)$. Such a sign exists since $v_{i,r}$ is a nonzero
function. Because the primes $r_{ij}$ are distinct, $\ell_r$ is well-defined.  
For convenience, we write
$\ell_r(h)=b_r(1)h(1)+b_r(-1)h(-1)$. We define a linear functional
\begin{align*}
 L_i(H)=\sum_{\eta\in\{-1,1\}^{B}}
 \left(\prod_{r\in B}b_r(\eta_r)\right)H(\eta)
\end{align*}
for functions $H:\{-1,1\}^{B}\rightarrow\mathbb{C}$. It follows that
\begin{align*}
 L_i(F_k)
 &=\sum_{\eta\in\{-1,1\}^{B}}
       \prod_{r\in B}\bigl(b_r(\eta_r)v_{k,r}(\eta_r)\bigr)\\
 &=\prod_{r\in B}\left(\sum_{e\in\{-1,1\}}b_r(e)v_{k,r}(e)\right)
  \\&=\prod_{r\in B}\ell_r(v_{k,r}),
\end{align*}
which is nonzero when $k=i$ and zero otherwise.  Applying $L_i$ to the relation $\sum_{k=1}^d a_kF_k=0$ gives
$a_iL_i(F_i)=0$, hence $a_i=0$. Repeating this for each $i$
proves the lemma.
\end{proof}

\begin{proposition}\label{prop:non-concentration}
For every $\vect c=(c_1,\ldots,c_d)\in\CC^d\setminus\{0\}$,
\begin{align}\label{eq:randomzero}
\mu\left(\sum_{i=1}^dc_iZ_i=0\right)=0.
\end{align}
\end{proposition}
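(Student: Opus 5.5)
Let $A=\{\omega:\sum_i c_iZ_i(\omega)=0\}$, which lies in $\mathcal F$ because $Z$ is measurable, being a uniform limit of functions of finitely many coordinates (Lemma~\ref{lem:uniformproduct}). The plan is to verify the hypothesis of Lemma~\ref{lem:conditionalgap} with $\delta=2^{-\binom d2}$. For $d=1$ the statement is trivial: $c_1\neq0$ and $Z_1\neq0$ everywhere by Lemma~\ref{lem:uniformproduct}, so $A=\varnothing$. Assume $d\geq2$ and fix $n$, and let $R=r_n$. Using the separation hypothesis \eqref{eq:abstractseparation}, I choose inductively, for each pair $i<j$, a prime $r_{ij}>R$ with $u_i(r_{ij})\neq u_j(r_{ij})$, all distinct from those chosen earlier. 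This is possible because each such set of primes is infinite. Put $B=\{r_{ij}\}$, so $B\cap\{r_1,\dots,r_n\}=\varnothing$ and hence $\mathcal F_n\subseteq\mathcal G_B$.

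Next I check the hypotheses of Lemma~\ref{lem:tensors} with $v_{i,r}(e)=V_{i,r}(e)$ for $e=\pm1$. These functions are nonzero by the factorization in the proof of Lemma~\ref{lem:uniformproduct}. For non-proportionality at $r=r_{ij}$, compute the cross term
\[
V_{j,r}(-1)V_{i,r}(1)-V_{j,r}(1)V_{i,r}(-1).
\]
Multiplying through by the four nonzero denominators, this vanishes iff
\[
(1-u_ix+x^2)(1+u_jx+x^2)=(1+u_ix+x^2)(1-u_jx+x^2),\qquad x=r^{-w}.
\]
Expanding, the difference is $2(u_j-u_i)x(1+x^2)$. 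This is nonzero because $u_i(r)\neq u_j(r)$, $x\neq0$, and $|x|<1$ forces $1+x^2\neq0$. Hence the $F_i(\eta)=\prod_{r\in B}V_{i,r}(\eta_r)$ are linearly independent.

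Now fix $\omega$ and write $Z_i(\omega^{B,\eta})=D_i(\omega)F_i(\eta)$, where $D_i(\omega)=\prod_{r\notin B}V_{i,r}(\varepsilon_r(\omega))$. Each $D_i(\omega)$ is nonzero, being a product of the same type covered by Lemma~\ref{lem:uniformproduct}, with coordinates in $B$ set to $0$. Since $(c_iD_i(\omega))_i$ is a nonzero vector and the $F_i$ are independent, some $\eta\in\{-1,1\}^B$ gives $\sum_ic_iD_i(\omega)F_i(\eta)\neq0$, that is, $\mathbf 1_A(\omega^{B,\eta})=0$. Lemma~\ref{lem:blockconditioning} then gives $\mu(A\mid\mathcal G_B)\leq1-2^{-|B|}$ almost surely. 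Conditioning on $\mathcal F_n$ and using the tower property \eqref{eq:tower} together with monotonicity of conditional expectation, I get $\mu(A\mid\mathcal F_n)\leq1-2^{-\binom d2}$ almost surely, for every $n$. Lemma~\ref{lem:conditionalgap} then yields $\mu(A)=0$.

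The only step needing real care is the non-proportionality computation, together with the bookkeeping that the bound $\delta$ is independent of $n$. That independence holds because $|B|=\binom d2$ no matter how large $R$ is. Everything else is a direct assembly of the preceding lemmas.
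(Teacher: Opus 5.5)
Your proposal is correct and follows the paper's proof essentially step for step: the same choice of $\binom d2$ separating primes beyond $r_n$, the same non-proportionality computation feeding Lemma~\ref{lem:tensors}, the block-conditioning bound $1-2^{-\binom d2}$, the tower property, and Lemma~\ref{lem:conditionalgap}. Your explicit identity $Z_i(\omega^{B,\eta})=D_i(\omega)F_i(\eta)$ is a slightly cleaner rendering of the paper's $Z_i=D_iF_i$, and so is your justification of $D_i(\omega)\neq0$ by setting the $B$-coordinates to $0$ in Lemma~\ref{lem:uniformproduct}.
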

\begin{proof}
For $d=1$, the result follows from the non-vanishing in Lemma~\ref{lem:uniformproduct}.
Suppose $d\geq2$, fix $\vect c\neq0$, and put
$$A=\left\{\sum_i c_iZ_i=0\right\},\qquad M=\binom d2.$$
For each prime $r$, let $v_{i,r}(e)=V_{i,r}(e)$ for $e\in\{-1,1\}$, whose
 values are nonzero. Writing $t=r^{-w}$, we have by definition \eqref{eq:localV} that 
\begin{align*}
 \frac{V_{i,r}(1)}{V_{i,r}(-1)}
 =\frac{1+u_i(r)t+t^2}{1-u_i(r)t+t^2}.
\end{align*}
It follows that
\begin{align}\label{eq:localindependent}
 v_{i,r}\not\parallel v_{j,r}
 \quad&\Longleftrightarrow\quad 2t(1+t^2)(u_i(r)-u_j(r))\neq0\nonumber\\&\Longleftrightarrow\quad u_i(r)\neq u_j(r).
\end{align}

For fixed $n$ and each pair $i<j$, choose a prime $r_{ij}>r_n$ with $u_i(r_{ij})\neq u_j(r_{ij})$, making all choices distinct. This is possible by the assumption \eqref{eq:abstractseparation}, and at each choice we only exclude the first $n$ primes and the primes previously chosen.   Put
\begin{align*}
 B_n=\{r_{ij}:i<j\},\qquad
 \mathcal G_n=\sigma(\varepsilon_r:r\notin B_n).
\end{align*}
Then $|B_n|=M$, and Lemma~\ref{lem:tensors} implies that 
\begin{align*}
 F_i(\eta)=\prod_{r\in B_n}V_{i,r}(\eta_r)
 \qquad(\eta\in\{-1,1\}^{B_n}, 1\leq i\leq d),
\end{align*}
are linearly independent. Note that for any $R\geq\max B_n$, we have 
\begin{align*}
    \prod_{r\leq R}V_{i,r}(\varepsilon_r)=\left(\prod_{r\in B_n}V_{i,r}(\varepsilon_r)\right)\prod_{r\leq R,r\notin B_n}V_{i,r}(\varepsilon_r).
\end{align*}
Then Lemma \ref{lem:uniformproduct} gives
\begin{align*}
    Z_i=D_iF_i\left((\varepsilon_r)_{r\in B_n}\right),\qquad D_i:=\prod_{r\notin B_n}V_{i,r}(\varepsilon_r), \qquad (1\leq i\leq d),
\end{align*}
where $D_i$'s are nonzero. Hence $c_i D_i$'s are not all zero, and the linear independence of $F_i$'s implies that $\sum_{i=1}^dc_i D_iF_i(\eta)$ is nonzero for at least one $\eta$. It follows from Lemma~\ref{lem:blockconditioning} that 
\begin{align*}\mu(A\mid \mathcal{G}_n)&=2^{-|B_n|}\sum_{\eta\in\{-1,1\}^{B_n}}\mathbf{1}_A(\omega^{B_n,\eta})\\&\leq 2^{-M}(2^M-1)\\&=1-2^{-M}\qquad{\rm almost~surely}.\end{align*}

Finally, note that $\mathcal{F}_n\subseteq\mathcal{G}_n$, then the tower property \eqref{eq:tower} gives 
\begin{align*}
    \mu(A\mid \mathcal{F}_n)&= \EE(\mu(A\mid\mathcal G_n)\mid\mathcal F_n)\\&\leq 1-2^{-M}\qquad{\rm almost~surely}.
\end{align*}
Therefore, we have $\mu(A)=0$ from Lemma \ref{lem:conditionalgap}, completing the proof.
\end{proof}

\subsection{From the random signs to prime quadratic twists}\label{subsect:passtotwist} We show \eqref{eq:twistP=0} and finish the proof of Theorem \ref{thm:abstract}. Fix $b\in\{1,3\}$. Recall that $\cP_{4,b}(X)$ is the set of primes
$p\leq X$ with $p\equiv b\pmod4$, and $P_b(X)=\#\mathscr{P}_{4,b}(X)$.
For $P_b(X)>0$, we define the probability space $(\Omega_X,\mathcal{F}_X,\mu_X)$ by:
\begin{align}
    \Omega_X=\mathscr{P}_{4,b}(X),\qquad\mathcal{F}_X=2^{\Omega_X},\qquad \mu_X(A)=\frac{\#A}{P_b(X)},
\end{align}
where $2^{\Omega_X}$ denotes the collection of all subsets of $\mathcal{P}_{4,b}(X)$. 
For an odd prime $p$, we define 
\begin{align*}
 Y(p)=(Y_1(p),\ldots,Y_d(p)),
 \qquad Y_i(p):=\prod_r V_{i,r}(\chi_p(r)).
\end{align*}
Then $Y$ is a random vector on $(\Omega_X,\mathcal{F}_X,\mu_X)$ and its distribution is given by
\begin{align}
 \nu_X(E):=
 \frac{\#\{p\in\cP_{4,b}(X):Y(p)\in E\}}{P_b(X)}
 \qquad(E\subseteq\CC^d\text{ Borel}).\label{eq:distributionofY(p)}
\end{align}
Since $Y(p)\in K$ for every $p\in\Omega_X$, we have $\nu_X(K)=1$.
We will show that, as $p$ varies, the distribution of $Y(p)$ approaches
that of $Z$, and then transfer Proposition~\ref{prop:non-concentration}
to the prime twists.

For a nonzero coefficient vector $\vect c=(c_1,\ldots,c_d)$, write
\begin{align*}
 H_{\vect c}=\{z\in\CC^d:\vect c\cdot z=0\},
 \qquad \vect c\cdot z=\sum_{i=1}^d c_i z_i,
\end{align*}
which is a complex linear hyperplane through the origin. Note that 
$$\nu_X(H_{\vect c})=\frac{
   \#\{p\in\cP_{4,b}(X):\vect c\cdot Y(p)=0\}}{P_b(X)}.$$ 
   Our goal is to prove
$$\sup_{\vect c\neq0}\nu_X(H_{\vect c})\to0,\qquad X\to\infty.$$
Equivalently, for any $\epsilon>0$, there exists some $X_0$ independent of $\vect{c}$ such that
$\nu_X(H_{\vect{c}})<\epsilon,$
for $X\geq X_0$ and $\vect{c}\neq \vect{0}$.

For probability measures $\nu_n$ and $\nu$
on $\CC^d$, we use the notation $\nu_n\Longrightarrow\nu$ to mean
\emph{weak convergence}:
\begin{align*}
\int\varphi\,d\nu_n\longrightarrow\int\varphi\,d\nu
 \quad\text{for every bounded continuous }\varphi:\CC^d\to\mathbb R.
\end{align*}
The following consequence of \cite[Theorem~4(iii), p.~3]{uniformweakconvergence1967} gives uniform convergence over
complex linear hyperplanes through the origin.

\begin{lemma}\label{lem:compactness}
Let $\nu_n$ and $\nu$ be probability measures supported on a common
compact set $K\subseteq\CC^d$. Suppose $\nu_n\Longrightarrow\nu$
and $\nu(H)=0$ for every complex linear hyperplane $H$ through the
origin. Then
\begin{align*}
 \sup_H\nu_n(H)\longrightarrow0,
\end{align*}
where the supremum is over these hyperplanes.
\end{lemma}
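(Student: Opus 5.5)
We argue by contradiction, combining a compactness argument for the hyperplanes with the portmanteau theorem. If the conclusion fails, there are $\delta>0$, a subsequence (still written $\nu_n$), and complex linear hyperplanes $H_n$ through the origin with $\nu_n(H_n)\geq\delta$ for all $n$. Write $H_n=H_{\vect c_n}$ with unit normal $\vect c_n\in\CC^d$, $\|\vect c_n\|=1$. The unit sphere is compact, so after passing to a further subsequence we may assume $\vect c_n\to\vect c$ with $\|\vect c\|=1$. Put $H=H_{\vect c}$. By hypothesis $\nu(H)=0$.

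To reach a contradiction, we thicken $H$. For $\eta>0$ let
\begin{align*}
 H^{\eta}=\{z\in\CC^d:|\vect c\cdot z|\leq\eta\},
\end{align*}
which is closed. Since $K$ is compact, there is $M$ with $\|z\|\leq M$ on $K$. For $z\in H_n\cap K$ we have $\vect c_n\cdot z=0$, hence
\begin{align*}
 |\vect c\cdot z|=|(\vect c-\vect c_n)\cdot z|\leq\|\vect c-\vect c_n\|M.
\end{align*}
Thus $H_n\cap K\subseteq H^{\eta}$ as soon as $\|\vect c-\vect c_n\|M\leq\eta$. Since every $\nu_n$ is supported on $K$, this gives $\nu_n(H^{\eta})\geq\nu_n(H_n)\geq\delta$ for all large $n$.

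Because $H^{\eta}$ is closed and $\nu_n\Longrightarrow\nu$, the portmanteau theorem yields
\begin{align*}
 \nu(H^{\eta})\geq\limsup_{n}\nu_n(H^{\eta})\geq\delta .
\end{align*}
As $\eta\downarrow0$, the closed sets $H^{\eta}$ decrease to $H$. Continuity from above of the finite measure $\nu$ therefore gives $\nu(H)\geq\delta>0$, contradicting $\nu(H)=0$. The same argument runs verbatim when the index is a continuous parameter $X\to\infty$ rather than $n$: one simply chooses a sequence $X_n\to\infty$ that witnesses the failure.

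The only step needing care is the uniformity over all hyperplanes. This is handled by the compactness of the space of unit normals together with the boundedness of the common support $K$, which lets a small perturbation of the normal vector be absorbed into the $\eta$-thickening. The rest is the portmanteau inequality for closed sets and continuity of measure, so no substantial obstacle is expected. One could alternatively quote the cited uniformity theorem for classes of sets whose boundaries are $\nu$-null, but the direct argument above is self-contained.
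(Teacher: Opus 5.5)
Your argument is correct and complete, but it is not the paper's route. The paper gives no proof of its own: it records the lemma as a consequence of the uniformity theorem of Billingsley and Tops\o e. In other words, it regards the complex hyperplanes through the origin as a class of sets on which weak convergence to $\nu$ is automatically uniform, so that $\sup_H|\nu_n(H)-\nu(H)|=\sup_H\nu_n(H)\to0$.

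Applying such a criterion needs more than each boundary $\partial H=H$ being $\nu$-null, as your closing aside suggests. The class of finite subsets of $[0,1]$ under Lebesgue measure shows that individual nullity is not enough. What is needed is that the $\nu$-mass of thin neighbourhoods of $H$ be small uniformly in $H$. Proving that uses exactly your compactness of unit normals together with the boundedness of $K$, a step the paper leaves implicit.

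Your version replaces the uniformity-class machinery by one use of the closed-set portmanteau inequality on the thickened limiting hyperplane, followed by continuity from above. It is therefore elementary and self-contained, and your remark about passing from $X\to\infty$ to sequences matches how the paper applies the lemma. The citation buys brevity and a general framework; your proof buys transparency. It also shows that the common compact support is inessential: a weakly convergent sequence on $\CC^d$ is tight, so you could run the same argument on a compact set carrying all but $\delta/2$ of the mass of every $\nu_n$.
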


We are now ready to prove \eqref{eq:twistP=0}.
\begin{proposition}\label{prop:primeproducts}
Under \eqref{eq:abstractseparation}, for each $b\in\{1,3\}$,
\begin{align}\label{eq:abstractuniform}
 \sup_{\vect c\neq0}
 \#\{p\in\cP_{4,b}(X):\vect c\cdot Y(p)=0\}=o(P_b(X)).
\end{align}
\end{proposition}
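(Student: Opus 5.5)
The plan is to apply Lemma~\ref{lem:compactness} with $\nu_n=\nu_{X_n}$ and the limit measure $\nu$ from \eqref{eq:distributionofZ}. Both measures are supported on the compact set $K\subseteq(\CC^\times)^d$ of Lemma~\ref{lem:uniformproduct}. Proposition~\ref{prop:non-concentration} gives $\nu(H_{\vect c})=\mu(\vect c\cdot Z=0)=0$ for every nonzero $\vect c$, and every complex hyperplane through the origin is of the form $H_{\vect c}$. So the only remaining input is the weak convergence $\nu_X\Longrightarrow\nu$ as $X\to\infty$.

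To pass from sequences to the continuous parameter, I argue by contradiction. Suppose \eqref{eq:abstractuniform} fails. Then there are $\epsilon>0$ and $X_n\to\infty$ with $\sup_{\vect c}\nu_{X_n}(H_{\vect c})\geq\epsilon$. Once weak convergence along every sequence $X_n\to\infty$ is known, Lemma~\ref{lem:compactness} applied to $\nu_{X_n}$ gives a contradiction.

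For the weak convergence, fix a bounded continuous $\varphi$. Since everything lives on $K$, I may replace $\varphi$ by its restriction to a compact neighbourhood $K'$ of $K$ in $\CC^d$, where it is uniformly continuous; $K'$ must contain all values of the truncated products as well. The truncations are $Y^{(R)}(\vect e)=(\prod_{r\leq R}V_{i,r}(e_r))_i$. They take values in a bounded region, since each $|V_{i,r}|\leq(1-r^{-\sigma})^{-2}$, so such a compact $K'$ exists. Given $\epsilon'>0$, uniform continuity gives $\delta>0$ controlling oscillations of $\varphi$ on $K'$. By \eqref{eq:uniformtail}, choose $R$ with $\sup_{\vect e}|Y(\vect e)-Y^{(R)}(\vect e)|<\delta$. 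Then
\[
\Bigl|\int\varphi\,d\nu_X-\frac{1}{P_b(X)}\sum_{p\in\cP_{4,b}(X)}\varphi\bigl(Y^{(R)}(\chi_p)\bigr)\Bigr|\leq\epsilon',
\]
and likewise for $\nu$ with $Y^{(R)}(\varepsilon)$.

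Now $Y^{(R)}(\chi_p)$ depends only on $(\chi_p(r))_{r\leq R}$. For $p>R$ these values lie in $\{-1,1\}$, and only finitely many primes $p\leq R$ are excluded, which is negligible after dividing by $P_b(X)$. By Lemma~\ref{lem:signs}, each sign pattern $\eta\in\{-1,1\}^{\{r\leq R\}}$ occurs with asymptotic frequency $2^{-\pi(R)}$. This is exactly the $\mu$-probability of the same pattern for $(\varepsilon_r)_{r\leq R}$. Summing over the finitely many patterns shows that the two truncated averages agree up to $o(1)$. Hence $\limsup_X|\int\varphi\,d\nu_X-\int\varphi\,d\nu|\leq2\epsilon'$, which proves $\nu_X\Longrightarrow\nu$.

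The main point needing care is this truncation step. The approximation must be uniform over $\vect e$, including $\vect e$ with zero entries, which is why Lemma~\ref{lem:uniformproduct} was set up on $\{-1,0,1\}^{\{\rm primes\}}$. Everything else is routine bookkeeping.
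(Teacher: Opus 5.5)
Your proposal is correct and follows essentially the same route as the paper. Both prove $\nu_X\Longrightarrow\nu$ by truncating at $r\leq R$, matching sign-pattern frequencies via Lemma~\ref{lem:signs}, and removing the truncation with the uniform tail bound \eqref{eq:uniformtail}, then apply Lemma~\ref{lem:compactness} along sequences $X\to\infty$. The only cosmetic difference is your enlarged compact set $K'$. It is unnecessary, because each truncation $Y^{(R)}(\vect e)$ equals $Y(\vect e')$ with $e'_r=0$ for $r>R$, and so already lies in $K$, as the paper observes.
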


\begin{proof}
We first show that $\nu_X\Longrightarrow\nu$.
Fix a bounded continuous function $\varphi:\CC^d\to\mathbb R$.
For $R\geq2$, define the truncated coordinates and random vectors by
\begin{align*}
 Y_{i,R}(p)&=\prod_{r\leq R}V_{i,r}(\chi_p(r)),
 &Z_{i,R}&=\prod_{r\leq R}V_{i,r}(\varepsilon_r),\\
 Y_R(p)&=(Y_{1,R}(p),\ldots,Y_{d,R}(p)),
 &Z_R&=(Z_{1,R},\ldots,Z_{d,R}),
\end{align*}
where the products are over primes. The values of $Y_R$ and $Z_R$ lie in the compact set $K$ defined in \eqref{eq:compactK}.
Fix $R$. Let $r_1<\cdots<r_t$ be the primes at most $R$.
For $\eta=(\eta_1,\ldots,\eta_t)\in\{-1,1\}^t$,
put
\begin{align*}
 y_R(\eta)&=
 \left(\prod_{j=1}^tV_{1,r_j}(\eta_j),\ldots,
       \prod_{j=1}^tV_{d,r_j}(\eta_j)\right),\\
 N_X(\eta)&=
 \#\{p\in\cP_{4,b}(X):p>R,\
       \chi_p(r_j)=\eta_j\text{ for }1\leq j\leq t\}.
\end{align*}

Note that for $p>R$, we have  $\chi_p(r_j)\neq0$, $\eta=(\chi_{p}(r_1),\ldots,\chi_p(r_{t}))\in\{-1,1\}^t$, and $Y_R(p)=y_R(\eta)$. It follows that
\begin{align}\label{eq:midstep}
 \frac1{P_b(X)}
 \sum_{\substack{p\in\cP_{4,b}(X)\\p>R}}\varphi(Y_R(p))
 &=\frac{1}{P_b(X)}\sum_{\eta\in\{-1,1\}^t}\sum_{\substack{p\in\cP_{4,b}(X), ~p>R\\\chi_p(r_j)=\eta_j~(1\leq j\leq t)}}\varphi(y_R(\eta))\nonumber\\&=
 \sum_{\eta\in\{-1,1\}^t}
   \frac{N_X(\eta)}{P_b(X)}\,\varphi(y_R(\eta))\nonumber\\&\longrightarrow 2^{-t}\sum_{\eta\in\{-1,1\}^t}\varphi(y_R(\eta)),\qquad X\rightarrow\infty,
\end{align}
where we used the fact $N_X(\eta)/P_b(X)\rightarrow 2^{-t}$ by Lemma~\ref{lem:signs}. Note also that
\begin{align}
 \left|
 \frac1{P_b(X)}
 \sum_{\substack{p\in\cP_{4,b}(X)\\p\leq R}}
 \varphi(Y_R(p))
 \right|
 \leq \frac{t}{P_b(X)}\sup_{z\in K}|\varphi(z)|
 \longrightarrow0,\qquad X\rightarrow\infty. \label{eq:midstepp<R}
\end{align}
Here $R$ and $t$ are fixed, $\varphi$ is bounded on the compact set $K$,
and $P_b(X)\to\infty$. 

On the other hand, for each $\eta=(\eta_1,\ldots,\eta_t)\in\{-1,1\}^t$, let $E_{\eta}$ denote the event $\{\varepsilon_{r_j}=\eta_j:1\leq j\leq t\}$.
Then the independence of $(\varepsilon_r)_r$ gives
\begin{align*}
\mu(E_{\eta})
=\prod_{j=1}^t\mu(\varepsilon_{r_j}=\eta_j)
 =2^{-t}.
\end{align*}
Note that we have  $Z_R=y_R(\eta)$ on $E_{\eta}$. Since $\Omega=\bigsqcup_{\eta\in\{-1,1\}^t} E_{\eta}$, we have
\begin{align}
 \EE\varphi(Z_R)
 =&\sum_{\eta\in\{-1,1\}^t}\int_{E_{\eta}}\varphi(Z_R)d\mu\nonumber\\=&\sum_{\eta\in\{-1,1\}^t}
   \varphi(y_R(\eta))\,
\mu(E_{\eta})\nonumber\\
 =&2^{-t}\sum_{\eta\in\{-1,1\}^t}\varphi(y_R(\eta)).\label{eq:midstepZ_R}
\end{align}
Thus, by \eqref{eq:midstep}, \eqref{eq:midstepp<R}, and \eqref{eq:midstepZ_R}, we have
\begin{align}\label{eq:truncatedlimit}
\lim_{X\to\infty}\frac1{P_b(X)}
 \sum_{p\in\cP_{4,b}(X)}\varphi(Y_R(p))
 =2^{-t}\sum_{\eta\in\{-1,1\}^t}\varphi(y_R(\eta))
 =\EE\varphi(Z_R).
\end{align}

We now remove the truncation. Lemma~\ref{lem:uniformproduct} gives
$Y_R(p)\to Y(p)$ uniformly in $p$ and $Z_R(\omega)\to Z(\omega)$
uniformly in $\omega\in\Omega$. Since all these random vectors lie in $K$,
uniform continuity of $\varphi$ on $K$ gives a common bound
$\epsilon_R\to0$ such that
\begin{align}
 |\varphi(Y(p))-\varphi(Y_R(p))|\leq\epsilon_R,\qquad
 |\varphi(Z(\omega))-\varphi(Z_R(\omega))|\leq\epsilon_R\label{eq:midstepuniformbd}
\end{align}
for every odd prime $p$ and every $\omega\in\Omega$.
Hence \eqref{eq:truncatedlimit} and \eqref{eq:midstepuniformbd}  give
\begin{align*}
 \limsup_{X\to\infty}
 \left|\int\varphi\,d\nu_X-\int\varphi\,d\nu\right|
 \leq2\epsilon_R.
\end{align*}
Letting $R\to\infty$ proves the weak convergence $\nu_X\Longrightarrow\nu$.

Finally, note that
Proposition~\ref{prop:non-concentration} gives
\begin{align*}
 \nu(H_{\vect c})=\mu\left(\sum_{i=1}^d c_iZ_i=0\right)=0
 \qquad(\vect c\neq0).
\end{align*}
As both $\nu$ and $\nu_X$ are supported on a compact set $K$, we can apply
Lemma \ref{lem:compactness},  along any sequence $X\to\infty$,
giving
\begin{align*}
 \sup_{\vect c\neq0}\nu_X(H_{\vect c})
 =\frac1{P_b(X)}\sup_{\vect c\neq0}
   \#\{p\in\cP_{4,b}(X):\vect c\cdot Y(p)=0\}
 \longrightarrow0,
\end{align*}
which proves \eqref{eq:abstractuniform}.
\end{proof}

Therefore, Propositions~\ref{prop:non-concentration} and \ref{prop:primeproducts}
prove 
Theorem~\ref{thm:abstract}.

\section{Proofs of the main results}\label{sect:proofs}

\begin{proof}[Proof of Theorem \ref{thm:standard}]
Put $w=s-(k-1)/2$ and $u_i(r)=\lambda_{f_i}(r)$.
Then $\re(w)>1$, $u_i(r)\in[-2,2]$, and
Lemma~\ref{lem:separation} gives the condition \eqref{eq:abstractseparation}.
By \eqref{eq:standardproduct}, the products in
Proposition~\ref{prop:primeproducts} are $Y_i(p)=L(f_i,\chi_p,s)$.
For $h=\sum_{i=1}^d c_if_i$, the absolute convergence of $Y_i(p)$ gives
\begin{align*}
 L(h,\chi_p,s)=\sum_{i=1}^d c_iY_i(p).
\end{align*}
Since $h\neq0$ exactly when $\vect c\neq0$, we have 
$$\sup_{0\neq h\in S_k}
 \#\{p\in\cP_{4,b}(X):L(h,\chi_p,s)=0\}= \sup_{\vect c\neq0}
 \#\{p\in\cP_{4,b}(X):\vect c\cdot Y(p)=0\}.$$
Thus, the result follows from
\eqref{eq:abstractuniform}.
\end{proof}

\begin{proof}[Proof of Theorem~\ref{thm:symmetric}]
Put $w=s-k+1$ and $u_i(r)=b_{f_i}(r)$.
Then again we have $\re(w)>1$, $u_i(r)\in[-2,2]$ by
\eqref{eq:normalizedcoeff}, and Lemma~\ref{lem:separation}
gives \eqref{eq:abstractseparation}. For the corresponding
products $Y_i(p)$ in Proposition~\ref{prop:primeproducts},
\eqref{eq:reducedsym} gives
\begin{align*}
 \sum_{i=1}^d c_iL(\Sym^2f_i,\chi_p,s)
 =L(w,\chi_p)\sum_{i=1}^d c_iY_i(p),
\end{align*}
where $L(w,\chi_p)\neq0$. Thus, $\sum_{i=1}^d c_iL(\Sym^2f_i,\chi_p,s)$ vanishes exactly
when $\sum_{i=1}^d c_iY_i(p)=0$. Taking the supremum over
$\vect c\neq0$ and applying \eqref{eq:abstractuniform} proves
\eqref{eq:symuniform}.
\end{proof}
\begin{proof}[Proof of Corollary~\ref{cor:determinant}]
Fix $b\in\{1,3\}$, $m\in\{1,2\}$, and $s\in\CC$ with
$\re(s)>\sigma_m$. Put
\begin{align*} v(p)&=(L^{(m)}(f_1,\chi_p,s),\ldots,L^{(m)}(f_d,\chi_p,s))^T
,\\
 B_X&=\sup_{\vect c\neq0}
 \#\{p\in\cP_{4,b}(X):\vect c\cdot v(p)=0\},\\P&=P_b(X).\end{align*}
By \eqref{eq:standarduniform} and \eqref{eq:symuniform},
$B_X=o(P)$. 
Note that every proper subspace $W\subsetneq\CC^d$ is contained in
$\{z:\vect c\cdot z=0\}$ for some $\vect c\neq0$. Hence
\[
 \#\{p\in\cP_{4,b}(X):v(p)\in W\}\le B_X
 \qquad(W\subsetneq\CC^d).
\]
Since each coordinate of $v(p)$ is a nonzero Euler product,  the
case $d=1$ is immediate. Now we assume $d\ge2$. If the matrix with
columns $v(p_1),\ldots,v(p_d)$ is singular, then there is a smallest
 index $j\in\{2,\ldots,d\}$ such that 
$v(p_1),\ldots,v(p_{j-1})$ are linearly independent and 
 $v(p_j)\in
 W:=\Span_{\CC}\{v(p_1),\ldots,v(p_{j-1})\}.$
Fix this index $j$. There are at most $P^{j-1}$ ordered choices
of the preceding primes $p_1,\ldots,p_{j-1}$ whose corresponding columns  
$v(p_1),\ldots,v(p_{j-1})$ are linearly independent. For each such choice, $\dim W=j-1<d$, so at most
$B_X$ primes can be chosen for $p_j$. This bound is independent
of the preceding choices. The remaining primes
$p_{j+1},\ldots,p_d$ can be chosen in $P^{d-j}$ ways.
Therefore, the number of  tuples $(p_1,\ldots,p_d)$ whose first dependence
occurs at $j$ is at most
$$
 P^{j-1}B_XP^{d-j}=B_XP^{d-1}.$$
Summing over the possible indices of $j$ gives
\[
 \#\{(p_1,\ldots,p_d)\in\cP_{4,b}(X)^d:
       \det(v(p_1),\ldots,v(p_d))=0\}
 \le (d-1)B_XP^{d-1}=o(P^d).
\]

Finally, let $T$ be a fixed finite collection of pairs $(m,s)$ with $m\in\{1,2\}$ and $\re(s)>\sigma_m$. For each $(m,s)\in T$, let 
$$E_{m,s}(X)=\left\{(p_1,\ldots,p_d)\in\cP_{4,b}(X)^d:
 \det(L^{(m)}(f_i,\chi_{p_j},s))_{1\leq i,j\leq d}=0\right\}.$$
The preceding argument gives
$\#E_{m,s}(X)=o(P^d)$. Since $T$ is fixed and finite,
\[
 \#\bigcup_{(m,s)\in T}E_{m,s}(X)
 \le \sum_{(m,s)\in T}\#E_{m,s}(X)
 =o(P^d).
\]
Outside the above union, the determinants are nonzero for every
$(m,s)\in T$. Thus, the proof is complete.
\end{proof}

To prove the Theorems \ref{thm:standard}, \ref{thm:symmetric}, and Corollary \ref{cor:determinant} for the odd primes version as mentioned in Remark~\ref{rem:scope}, it suffices to notice that $\mathcal{P}_{\odd}(X)=\mathcal{P}_{4,1}(X)\sqcup\mathcal{P}_{4,3}(X)$ and $\#\mathcal{P}_{\odd}(X)=P_1(X)+P_3(X)$, and repeat the preceding counting proof. We omit the proofs.  

\section{Kernel functions and spanning subsets}\label{sect:kernels}
Kohama and Mizuno \cite{KohamaMizuno2018,KohamaMizuno2019}
studied kernels for twisted symmetric square $L$-functions and applied
them to the computation of exact special values. Here, we recall the standard kernel representations.

For $h,g\in S_k$, the Petersson inner product is
\begin{align*}
 \langle h,g\rangle
 =\int_{\Sl_2(\ZZ)\backslash\mathbb H}
 h(z)\overline{g(z)}y^k\,\frac{dx\,dy}{y^2},
 \qquad z=x+iy.
\end{align*}
Write $\|h\|^2=\langle h,h\rangle$ and $N_i=\|f_i\|^2$.
For $m\in\{1,2\}$ and $\re(s)>\sigma_m$, put
\begin{align*}
 \mathcal L^{(m)}_{p,s}\left(\sum_{i=1}^d c_if_i\right)
 =\sum_{i=1}^d c_iL^{(m)}(f_i,\chi_p,s).
\end{align*}
Thus $\mathcal L^{(1)}_{p,s}(h)=L(h,\chi_p,s)$, and
$\mathcal L^{(2)}_{p,s}$ is the linear functional defined in
\eqref{eq:symfunctional}.

Let $\Gamma=\Sl_2(\ZZ)$ and
$\Gamma_\infty=\{\pm\left(\begin{smallmatrix}1&n\\0&1\end{smallmatrix}\right):n\in\ZZ\}$.
For $n\geq1$, the  Poincar\'e series and its normalization are
\begin{align}\label{eq:poincare}
 P_{k,n}(z)&=\sum_{\gamma\in\Gamma_\infty\backslash\Gamma}
 \frac{e^{2\pi i n\gamma z}}{j(\gamma,z)^k},&
 Q_{k,n}(z)&=\frac{(4\pi n)^{k-1}}{\Gamma(k-1)}P_{k,n}(z),
\end{align}
where $j(\gamma,z)=cz+d$ for
$\gamma=\left(\begin{smallmatrix}a&b\\c&d\end{smallmatrix}\right)$.
These forms belong to $S_k$, and the Petersson coefficient formula
\cite[Theorem~8.2.3]{CohenStromberg2017} gives
\begin{align}\label{eq:poincarepairing}
 \langle h,Q_{k,n}\rangle=a_h(n),\qquad
 Q_{k,n}=\sum_{i=1}^d\frac{\overline{a_{f_i}(n)}}{N_i}f_i.
\end{align}
For the symmetric square normalization, write
\begin{align}\label{eq:squarefactor}
 D_p(s)=L(2s-2k+2,\chi_p^2)
 =\zeta(2s-2k+2)\bigl(1-p^{2k-2-2s}\bigr).
\end{align}
We have the Dirichlet-series identity
\begin{align}\label{eq:squaredirichlet}
 L(\Sym^2f,\chi_p,s)
 =D_p(s)\sum_{n\geq1}\frac{a_f(n^2)\chi_p(n)}{n^s},
 \qquad f\in\HH_k,\quad\re(s)>k.
\end{align}
Recall the following standard kernel formulas; see also \cite[pp.~67-68]{Stopplesymmetricsquare}.
\begin{proposition}\label{thm:kernelconstruction}
For an odd prime $p$, the series
\begin{align}
 K^{(1)}_{k,p,s}(z)
 &=\sum_{n\geq1}\frac{\chi_p(n)}{n^{\overline{s}}}Q_{k,n}(z),
 &&\re(s)>(k+1)/2,\label{eq:standardkernel}\\
 K^{(2)}_{k,p,s}(z)
 &=\overline{D_p(s)}\sum_{n\geq1}
 \frac{\chi_p(n)}{n^{\overline{s}}}Q_{k,n^2}(z),
 &&\re(s)>k\label{eq:symkernel}
\end{align}
converge absolutely in Petersson norm and locally uniformly on
$\mathbb H$. For $m\in\{1,2\}$ and $\re(s)>\sigma_m$, the resulting
form is the unique element of $S_k$ satisfying
\begin{align}\label{eq:kernelpairing}
 \langle h,K^{(m)}_{k,p,s}\rangle=\mathcal L^{(m)}_{p,s}(h)
 \qquad(h\in S_k),
\end{align}
and its spectral expansion is
\begin{align}\label{eq:kernelspectral}
 K^{(m)}_{k,p,s}
 =\sum_{i=1}^d\frac{\overline{L^{(m)}(f_i,\chi_p,s)}}{N_i}f_i.
\end{align}
\end{proposition}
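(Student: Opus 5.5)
The plan is to prove the three assertions of Proposition~\ref{thm:kernelconstruction} in turn: convergence of the kernel series, the pairing identity \eqref{eq:kernelpairing}, and the spectral expansion \eqref{eq:kernelspectral}.

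\emph{Convergence.} We bound the norm of each Poincar\'e series through its spectral expansion \eqref{eq:poincarepairing}:
\begin{align*}
\|Q_{k,n}\|^2=\sum_{i=1}^d\frac{|a_{f_i}(n)|^2}{N_i}\ll_{k,\epsilon} n^{k-1+\epsilon},
\end{align*}
using Deligne's bound $|a_{f_i}(n)|\le d(n)n^{(k-1)/2}$. Hence $\|Q_{k,n}\|\ll n^{(k-1)/2+\epsilon}$. The series $\sum_n n^{-\re(s)}\|Q_{k,n}\|$ therefore converges for $\re(s)>(k+1)/2$. In the symmetric square case the terms are bounded by $n^{-\re(s)}\|Q_{k,n^2}\|\ll n^{-\re(s)+k-1+\epsilon}$, so that series converges for $\re(s)>k$.

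Since $S_k$ is finite dimensional, all norms on it are equivalent. In particular, the sup norm of $f(z)$ over a compact set in $\mathbb H$ is a bounded linear functional of $f$. So absolute convergence in Petersson norm gives absolute and locally uniform convergence on $\mathbb H$, and the limit lies in $S_k$.

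\emph{Pairing.} Norm convergence lets us pair term by term. For $m=1$, conjugate-linearity of the inner product in the second slot turns $n^{-\overline s}$ into $n^{-s}$, and \eqref{eq:poincarepairing} gives
\begin{align*}
\langle h,K^{(1)}_{k,p,s}\rangle=\sum_{n\ge1}\chi_p(n)n^{-s}a_h(n)=L(h,\chi_p,s)=\mathcal L^{(1)}_{p,s}(h).
\end{align*}
For $m=2$ it suffices, by linearity, to take $h=f_i$. The factor $\overline{D_p(s)}$ conjugates back to $D_p(s)$, and we get
\begin{align*}
D_p(s)\sum_{n\ge1}a_{f_i}(n^2)\chi_p(n)n^{-s}=L(\Sym^2f_i,\chi_p,s)
\end{align*}
by \eqref{eq:squaredirichlet}. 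Extending linearly in $h$ gives $\mathcal L^{(2)}_{p,s}(h)$.

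\emph{Uniqueness and spectral expansion.} Uniqueness follows from Riesz representation, since the Petersson inner product is nondegenerate on the finite-dimensional space $S_k$. For the expansion, the Hecke eigenforms are orthogonal: $\langle f_j,f_i\rangle=N_i\delta_{ij}$. Pairing the candidate $\sum_i \overline{L^{(m)}(f_i,\chi_p,s)}N_i^{-1}f_i$ against $f_j$ therefore gives $L^{(m)}(f_j,\chi_p,s)=\mathcal L^{(m)}_{p,s}(f_j)$. So this candidate satisfies \eqref{eq:kernelpairing}, and by uniqueness it equals $K^{(m)}_{k,p,s}$.

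The main obstacle is the Dirichlet series identity \eqref{eq:squaredirichlet}, if it must be proved rather than quoted. The approach would be Shimura's classical computation: write the local identity
\begin{align*}
\sum_{j\ge0}a_f(r^{2j})X^j=\frac{1+r^{k-1}X}{(1-\alpha^2X)(1-r^{k-1}X)(1-\beta^2X)}
\end{align*}
and specialize $X=\chi_p(r)r^{-s}$. The numerator factor $1+r^{k-1}X$ is then cancelled against the local factor of $D_p(s)$, namely $1-r^{2k-2}X^2$. The extra factor $1-p^{2k-2-2s}$ in \eqref{eq:squarefactor} accounts for the missing $r=p$ term, where $\chi_p(p)^2=0$.
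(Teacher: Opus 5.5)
Your argument is correct and is the standard one. The paper states this proposition without proof, as a recalled standard fact, after recording exactly the two inputs you use: the Petersson coefficient formula \eqref{eq:poincarepairing} and the identity \eqref{eq:squaredirichlet}. Your norm bound via the spectral expansion of $Q_{k,n}$, the term-by-term pairing, and the Riesz and orthogonality step fill this in properly. One wording point: the supremum over a compact set is a continuous seminorm on $S_k$, not a linear functional, but a seminorm is all your equivalence-of-norms argument needs.

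One correction to your optional sketch of \eqref{eq:squaredirichlet}: the local identity you display is wrong as written, because it has a spurious factor $1-r^{k-1}X$ in the denominator. Taking the even part of $\sum_{n\geq0}a_f(r^n)T^n=\bigl((1-\alpha T)(1-\beta T)\bigr)^{-1}$ gives
\[
\sum_{j\geq0}a_f(r^{2j})X^j=\frac{1+r^{k-1}X}{(1-\alpha^2X)(1-\beta^2X)}=\frac{1-r^{2k-2}X^2}{(1-\alpha^2X)(1-r^{k-1}X)(1-\beta^2X)}.
\]
Multiplying by the local factor $(1-r^{2k-2}X^2)^{-1}$ of $D_p(s)$ then gives exactly $\prod_{j=0}^2(1-\alpha^{2-j}\beta^jX)^{-1}$, using $\alpha\beta=r^{k-1}$. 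With your displayed version, the cancellation you describe would leave $(1-r^{k-1}X)^{-2}$ instead of $(1-r^{k-1}X)^{-1}$. The paper quotes \eqref{eq:squaredirichlet} rather than proving it, so this slip does not affect your main argument.
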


We next introduce the density terminology used to describe subsets of
these kernels. 

\begin{definition}[Relative densities]\label{def:relativedensity}
For $\mathcal A\subseteq\cP_{4,b}$, put
$A(X)=\#(\mathcal A\cap[1,X])$. Its upper and lower relative densities are
\begin{align*}
 \overline\delta_b(\mathcal A)
 &=\limsup_{X\to\infty}\frac{A(X)}{P_b(X)},&
 \underline\delta_b(\mathcal A)
 &=\liminf_{X\to\infty}\frac{A(X)}{P_b(X)}.
\end{align*}
If these quantities agree, their common value is the relative density
of $\mathcal A$ in $\cP_{4,b}$.
\end{definition}

\begin{theorem}\label{thm:kernelspanning}
Fix $b\in\{1,3\}$, $m\in\{1,2\}$, and $s$ with
$\re(s)>\sigma_m$. Then
\begin{align}\label{eq:kerneluniform}
 B_{m,s,b}(X):=\sup_{W\subsetneq S_k}
 \#\{p\in\cP_{4,b}(X):K^{(m)}_{k,p,s}\in W\}
 =o(P_b(X)),
\end{align}
where $W$ runs over proper complex linear subspaces of $S_k$.
Consequently:
\begin{enumerate}
 \item For every $\delta>0$, there exists $X_0$ such that, for every
 $X\geq X_0$ and every $\mathcal A_X\subseteq\cP_{4,b}(X)$ with
 $\#\mathcal A_X\geq\delta P_b(X)$,
 \begin{align*}
  \Span_{\CC}\{K^{(m)}_{k,p,s}:p\in\mathcal A_X\}=S_k.
 \end{align*}
 \item If $\mathcal A\subseteq\cP_{4,b}$ satisfies
 $\overline\delta_b(\mathcal A)>0$, then, for every $Y>0$,
 \begin{align*}
  \Span_{\CC}\{K^{(m)}_{k,p,s}:p\in\mathcal A,\ p>Y\}=S_k.
 \end{align*}
 \item Almost every ordered tuple $(p_1,\ldots,p_d)\in\cP_{4,b}(X)^d$
 gives a basis $K^{(m)}_{k,p_1,s},\ldots,K^{(m)}_{k,p_d,s}$ of $S_k$.
\end{enumerate}
The estimate \eqref{eq:kerneluniform} and assertions (i)--(iii) hold
simultaneously for any fixed finite collection of pairs $(m,s)$ with
$m\in\{1,2\}$ and $\re(s)>\sigma_m$.
\end{theorem}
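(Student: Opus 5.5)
The plan is to translate membership of $K^{(m)}_{k,p,s}$ in a proper subspace into the vanishing of a linear functional, and then apply Theorems \ref{thm:standard} and \ref{thm:symmetric} together with Corollary \ref{cor:determinant}.

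\emph{Step 1: reduction to hyperplanes.} Every proper subspace $W\subsetneq S_k$ lies in a hyperplane. Such a hyperplane has the form $\{g\in S_k:\langle h,g\rangle=0\}$ for some $0\neq h\in S_k$, because the Petersson product is nondegenerate. Therefore
\[
\#\{p:K^{(m)}_{k,p,s}\in W\}\le\#\{p:\langle h,K^{(m)}_{k,p,s}\rangle=0\}=\#\{p:\mathcal L^{(m)}_{p,s}(h)=0\},
\]
where the equality is \eqref{eq:kernelpairing} from Proposition~\ref{thm:kernelconstruction}. Write $h=\sum c_if_i$ with $\vect c\neq0$.
\begin{itemize}
\item For $m=1$, $\mathcal L^{(1)}_{p,s}(h)=L(h,\chi_p,s)$, so \eqref{eq:standarduniform} gives a bound uniform in $h$.
\item For $m=2$, $\mathcal L^{(2)}_{p,s}(h)=\sum c_iL(\Sym^2f_i,\chi_p,s)$, so \eqref{eq:symuniform} gives a bound uniform in $\vect c$.
\end{itemize}
Taking the supremum over $W$ yields \eqref{eq:kerneluniform}. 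Equivalently, one can work directly with the spectral expansion \eqref{eq:kernelspectral}: the coordinate vector of the kernel in the basis $f_i/N_i$ is $\overline{v(p)}$, and a hyperplane condition $\sum \bar c_i\overline{L^{(m)}(f_i,\chi_p,s)}=0$ is the conjugate of one covered by the theorems.

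\emph{Step 2: the consequences.}
\begin{enumerate}
\item Given $\delta$, choose $X_0$ with $B_{m,s,b}(X)<\delta P_b(X)$ for all $X\ge X_0$. If $\#\mathcal A_X\geq\delta P_b(X)$, the kernels indexed by $\mathcal A_X$ cannot all lie in one proper subspace $W$, since that would give $\#\mathcal A_X\le B_{m,s,b}(X)$. Hence they span $S_k$.
\item Suppose $\overline\delta_b(\mathcal A)>0$ and $Y>0$. Pick $\delta=\overline\delta_b(\mathcal A)/2$. There are arbitrarily large $X$ with $\#(\mathcal A\cap[1,X])\ge \tfrac32\delta P_b(X)$. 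Removing the at most $Y$ primes $\le Y$ costs $o(P_b(X))$, so for large such $X$ the set $\mathcal A_X=\{p\in\mathcal A\cap[1,X]:p>Y\}$ has at least $\delta P_b(X)$ elements. Now apply (i).
\item The kernels $K^{(m)}_{k,p_j,s}$ form a basis iff the matrix of their coordinates in the basis $f_i/N_i$ is invertible. That matrix is the complex conjugate of $(L^{(m)}(f_i,\chi_{p_j},s))_{i,j}$, so the condition is exactly the nonvanishing of the determinant in Corollary~\ref{cor:determinant}. Hence \eqref{eq:detcount} gives (iii).
\end{enumerate}

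\emph{Step 3: finitely many pairs.} For a finite collection $T$ of pairs $(m,s)$, the uniform bound for the whole collection is at most $\sum_{(m,s)\in T}B_{m,s,b}(X)=o(P_b(X))$. Assertion (i) then follows with a single $X_0$, by taking the maximum of the individual thresholds; (ii) follows the same way, and (iii) follows from the final statement of Corollary~\ref{cor:determinant}.

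The only delicate point is bookkeeping: the conjugation in Step 1 and the removal of the small primes in (ii). Beyond that, the argument is a direct transfer from the earlier theorems.
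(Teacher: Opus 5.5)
Your proposal is correct and follows the paper's proof essentially step for step. You reduce proper subspaces to orthogonal complements $h^\perp$ via \eqref{eq:kernelpairing} and apply Theorems~\ref{thm:standard} and~\ref{thm:symmetric}; for (iii) you identify the kernels' coordinate matrix with the conjugate of $(L^{(m)}(f_i,\chi_{p_j},s))_{i,j}$ and invoke Corollary~\ref{cor:determinant}. The only differences are cosmetic: you deduce (ii) from (i) rather than by direct contradiction with $B_{m,s,b}(X)=o(P_b(X))$, and you bound the finite collection by a sum rather than a maximum, both of which are fine.
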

\begin{proof}
Note that every proper complex subspace $W\subsetneq S_k$ is contained in
$h^\perp=\{g\in S_k:\langle h,g\rangle=0\}$ for some $h\neq0$.
By \eqref{eq:kernelpairing}, $K^{(m)}_{k,p,s}$ belongs to
$h^\perp$ exactly when $\mathcal L^{(m)}_{p,s}(h)=0$.
Since $h^\perp$ is itself a proper subspace, we obtain
\begin{align*}
 B_{m,s,b}(X)
 =\sup_{0\neq h\in S_k}
 \#\{p\in\cP_{4,b}(X):\mathcal L^{(m)}_{p,s}(h)=0\}.
\end{align*}
Thus, Theorem~\ref{thm:standard} gives \eqref{eq:kerneluniform} for $m=1$,
and Theorem~\ref{thm:symmetric} gives \eqref{eq:kerneluniform} for $m=2$.

For (i), fix $\delta>0$ and choose $X_0$ so that
$B_{m,s,b}(X)<\delta P_b(X)$ for every $X\geq X_0$.
For such an $X$, set
$W_X=\Span_{\CC}\{K^{(m)}_{k,p,s}:p\in\mathcal A_X\}$.
If $W_X\subsetneq S_k$, the definition of $B_{m,s,b}(X)$ gives the contradiction
\begin{align*}
 \delta P_b(X)\leq\#\mathcal A_X\leq B_{m,s,b}(X)<\delta P_b(X).
\end{align*}

For (ii), fix $Y>0$ and put $\mathcal A_{>Y}=\{p\in\mathcal A:p>Y\}$.
Removing finitely many primes gives
$\overline\delta_b(\mathcal A_{>Y})=\overline\delta_b(\mathcal A)>0$.
If $W_Y=\Span_{\CC}\{K^{(m)}_{k,p,s}:p\in\mathcal A_{>Y}\}\subsetneq S_k$, then
\begin{align*}
 \#(\mathcal A_{>Y}\cap[1,X])\leq B_{m,s,b}(X)=o(P_b(X)),
\end{align*}
which implies that $\overline\delta_b(\mathcal A_{>Y})=0$, 
leading to a contradiction.

For (iii), fix $\vect p=(p_1,\ldots,p_d)\in\cP_{4,b}(X)^d$ and write
\begin{align*}
 M_{m,s}(\vect p)
 &=(L^{(m)}(f_i,\chi_{p_j},s))_{1\leq i,j\leq d},&
 D&=\operatorname{diag}(N_1,\ldots,N_d).
\end{align*}
By \eqref{eq:kernelspectral}, the $j$th column of
$D^{-1}\overline{M_{m,s}(\vect p)}$ consists of the coefficients of
$K^{(m)}_{k,p_j,s}$ in the basis $f_1,\ldots,f_d$.
Since every $N_i>0$, the kernels indexed by $\vect p$ form a basis
if and only if $\det M_{m,s}(\vect p)\neq0$.
Corollary~\ref{cor:determinant} bounds the number of tuples with
$\det M_{m,s}(\vect p)=0$ by $o(P_b(X)^d)$, proving (iii).

For a fixed nonempty finite collection $\mathscr T$ of pairs $(m,s)$ with $m\in\{1,2\}$ and $\re(s)>\sigma_m$, put
\begin{align*}
 B_{\mathscr T}(X)
 =\max_{(m,s)\in\mathscr T}B_{m,s,b}(X)=o(P_b(X)).
\end{align*}
Using $B_{\mathscr T}(X)$ in the proofs of (i) and (ii) proves the
spanning conclusions for every $(m,s)\in\mathscr T$.
For (iii), Corollary~\ref{cor:determinant} bounds by $o(P_b(X)^d)$
the number of tuples for which $\det M_{m,s}(\vect p)=0$ for at
least one $(m,s)\in\mathscr T$. This completes the proof.
\end{proof}

\section{Applications to periods and rational bases}\label{sect:applications}

We now apply the uniform non-vanishing theorems to twisted 
periods and rational cusp forms obtained from  Eisenstein series at prime levels.

\subsection{Twisted periods}
For $0\leq t\leq k-2$ and $h(z)=\sum_{n\geq1}a_h(n)q^n\in S_k$, put
\begin{align}
 r_{t,\chi_p}(h)
 =\int_0^{i\infty}(h\otimes\chi_p)(z)z^t\,dz
 =\frac{t!}{(-2\pi i)^{t+1}}L(h,\chi_p,t+1),\label{eq:defoftwistperiod}
\end{align}
where
\begin{align*}
 (h\otimes\chi_p)(z)=\sum_{n\geq1}a_h(n)\chi_p(n)q^n
 \in S_k(p^2,\chi_p^2);
\end{align*}
see \cite[Proposition~14.19]{IwaniecKowalski2004}. Here
$L(h,\chi_p,s)$ is the twisted $L$-function defined in the introduction.

Note that each $r_{t,\chi_p}$ is a linear functional on $S_k$.
For each fixed noncentral index $t$ ($t\neq k/2-1$), these functionals,
with $p\in\cP_{4,b}$, span $S_k^*$ by
\cite[Theorem~5.1]{Ni2026}. It remains open whether every choice
of $d$ distinct primes in $\cP_{4,b}$ gives a basis. The following
corollary shows that almost every tuple gives a basis simultaneously
at all such indices.

\begin{corollary}\label{cor:periods}
Fix $b\in\{1,3\}$. Outside a set of $o(P_b(X)^d)$ tuples
$(p_1,\ldots,p_d)\in\cP_{4,b}(X)^d$, the periods
\begin{align*}
r_{t,\chi_{p_1}},\ldots,r_{t,\chi_{p_d}}
\end{align*}
form a basis of $S_k^*$ simultaneously for every integer
\begin{align*}
 0\leq t\leq k/2-2\qquad\text{or}\qquad k/2\leq t\leq k-2.
\end{align*}
\end{corollary}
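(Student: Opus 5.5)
The plan is to reduce every admissible index $t$ to a twisted $L$-value in the half-plane of absolute convergence, and then invoke the ``simultaneous'' part of Corollary~\ref{cor:determinant} for a finite set of evaluation points. By \eqref{eq:defoftwistperiod}, $r_{t,\chi_p}(h)=\frac{t!}{(-2\pi i)^{t+1}}L(h,\chi_p,t+1)$, where the prefactor is nonzero and independent of $p$ and $h$. The functionals $r_{t,\chi_{p_1}},\ldots,r_{t,\chi_{p_d}}$ form a basis of $S_k^*$ if and only if the matrix $(r_{t,\chi_{p_j}}(f_i))_{i,j}$ is nonsingular. That holds if and only if $\det(L(f_i,\chi_{p_j},t+1))_{i,j}\neq0$.

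First consider the indices $k/2\le t\le k-2$. Here $s=t+1\ge k/2+1>(k+1)/2$, so these points lie in the region where Corollary~\ref{cor:determinant} with $m=1$ applies directly.

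Next consider the indices $0\le t\le k/2-2$, which lie to the left of the center. I will use the functional equation of the twist. For $f\in\HH_k$ and odd $p$, $f\otimes\chi_p$ is a newform of level $p^2$ with trivial character. Its completed $L$-function $\Lambda(f\otimes\chi_p,s)=(p/2\pi)^{s}\Gamma(s)L(f,\chi_p,s)$ satisfies
\[
\Lambda(f\otimes\chi_p,s)=\epsilon_p\,\Lambda(f\otimes\chi_p,k-s),
\]
where the root number is $\epsilon_p=i^k\chi_p(-1)\tau(\chi_p)^2/p$ (up to the standard normalization). The key point is that $\epsilon_p$ depends only on $p$ and $k$, not on the level-one eigenform $f$, since $f$ has level one. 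Since $\Gamma$ is finite and nonzero at the positive integers $t+1$ and $k-1-t$, we get
\[
L(f_i,\chi_p,t+1)=c_{p,t}\,L(f_i,\chi_p,k-1-t)
\]
with a constant $c_{p,t}\neq0$ independent of $i$. Thus the $j$th column of the matrix at index $t$ is a nonzero multiple of the $j$th column of the matrix at $s'=k-1-t$. Hence $\det(L(f_i,\chi_{p_j},t+1))=\prod_j c_{p_j,t}\cdot\det(L(f_i,\chi_{p_j},k-1-t))$. Moreover $k-1-t\ge k/2+1>(k+1)/2$, so again only values in the absolute-convergence region occur.

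Finally, let $\mathscr T=\{(1,s):s\in\{k/2+1,\ldots,k-1\}\}$. This is a fixed finite collection covering all the needed evaluation points for both ranges of $t$. The last assertion of Corollary~\ref{cor:determinant} gives a set of $o(P_b(X)^d)$ tuples outside of which all the determinants $\det(L(f_i,\chi_{p_j},s))$, $s\in\mathscr T$, are nonzero simultaneously. By the two reductions above, for every such tuple the periods form a basis of $S_k^*$ at every allowed $t$.

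The main obstacle is the second step: justifying the functional equation with a root number independent of $f_i$. I would cite the standard twisting result (e.g.\ \cite[Proposition~14.20]{IwaniecKowalski2004}) for a level-one form twisted by a primitive character of conductor $p$. The constant must be uniform in $i$ so that it factors out of the columns, and it must be nonzero, which follows from $|\tau(\chi_p)|^2=p$. The central index $t=k/2-1$ is excluded because there $s=k/2$ is its own reflection, so the reduction gives no point in the region of absolute convergence.
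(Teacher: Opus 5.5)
Your proposal is correct and follows the paper's proof: the same use of \eqref{eq:defoftwistperiod} for $k/2\le t\le k-2$, the same column-by-column use of the quadratic-twist functional equation with an $f_i$-independent nonzero constant for $0\le t\le k/2-2$, and Corollary~\ref{cor:determinant} over finitely many evaluation points. Your observation that both ranges land on $s\in\{k/2+1,\ldots,k-1\}$ is a slight economy over the paper's union over all $t$. One small slip: since $\tau(\chi_p)^2=\chi_p(-1)p$, the root number is $i^k\tau(\chi_p)^2/p=i^k\chi_p(-1)$, so your formula has an extra factor $\chi_p(-1)$; this is harmless, because only its non-vanishing and independence of $f_i$ are used.
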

\begin{proof}
Since $f_1,\ldots,f_d$ form a basis of $S_k$, the periods form a
basis of $S_k^*$ exactly when the matrix
$R_t=(r_{t,\chi_{p_j}}(f_i))_{i,j=1}^d$ is nonsingular.
If $k/2\leq t\leq k-2$, then
\begin{align*}
 \det R_t
 =\left(\frac{t!}{(-2\pi i)^{t+1}}\right)^d
\det(L(f_i,\chi_{p_j},t+1))_{1\leq i,j\leq d}.
\end{align*}
Here $t+1\geq k/2+1>(k+1)/2$, so
Corollary~\ref{cor:determinant} applies.

Now suppose $0\leq t\leq k/2-2$. For $f\in\HH_k$, put
\begin{align*}
 \Lambda_{f,p}(s)
 =\left(\frac p{2\pi}\right)^s\Gamma(s)L(f,\chi_p,s).
\end{align*}
The quadratic-twist functional equation
\cite[Chapter~14]{IwaniecKowalski2004}, in the normalization recorded
in \cite[Section~2.1]{Ni2026}, is
\begin{align*}
 \Lambda_{f,p}(s)=i^k\chi_p(-1)\Lambda_{f,p}(k-s).
\end{align*}
Consequently,
\begin{align*}
 L(f,\chi_p,s)
 &=i^k\chi_p(-1)\left(\frac p{2\pi}\right)^{k-2s}
 \frac{\Gamma(k-s)}{\Gamma(s)}L(f,\chi_p,k-s).
\end{align*}
Setting $s=t+1$ and using \eqref{eq:defoftwistperiod} gives
\begin{align*}
 r_{t,\chi_{p_j}}(f_i)
 &=C_{p_j,t}L(f_i,\chi_{p_j},k-t-1),\\
 C_{p,t}
 &:=\frac{i^k\chi_p(-1)(k-t-2)!}{(-2\pi i)^{t+1}}
 \left(\frac p{2\pi}\right)^{k-2t-2},
\end{align*}
where the factor $C_{p,t}$ is nonzero and independent of $f_i$. Hence
\begin{align*}
 \det R_t
 =\left(\prod_{j=1}^d C_{p_j,t}\right)
 \det(L(f_i,\chi_{p_j},k-t-1))_{i,j=1}^d.
\end{align*}
Since $k-t-1\geq k/2+1>(k+1)/2$,
Corollary~\ref{cor:determinant} also applies in this range.
Finally, since there are only finitely many admissible indices $t$, the union
of their exceptional sets still has size $o(P_b(X)^d)$.
\end{proof}

\begin{remark}
    One may compare Corollary \ref{cor:periods} to \cite[Theorems 1.1-1.4]{NiXueTwisted2026}, in which the authors study the independence when the weight $k$ of the space $S_k$ is sufficiently large relative to the conductor and the number of twisted periods. Here, the weight is fixed, and prime conductors vary, producing density-one conclusions. 
\end{remark}

\subsection{Rational bases from Eisenstein series at prime levels}
We recall the construction in \cite[Definitions 1.1--1.3]{Ni2026}. Let $k_1,k_2\geq3$ be
integers of the same parity with $k_1<k_2$, and let $c\geq1$ be an integer. For
$p\equiv(-1)^{k_1}\pmod4$ and $a\in\{k_1,k_2\}$, we have the Eisenstein series
\begin{align*}
 G_{a,\chi_p}(z)
 =\frac12L(1-a,\chi_p)
   +\sum_{n\geq1}\left(\sum_{v\mid n}\chi_p(v)v^{a-1}\right)q^n.
\end{align*}

Let $f(z)\in M_{a}(N,\nu_1)$ and $g(z)\in M_{b}(N,\nu_2)$ be modular forms of level $N$, weights $a$ and $b$ and nebentypus $\nu_1$ and $\nu_2$, respectively. For an integer $e\geq1$, the $e$-th \textit{Rankin-Cohen bracket} is given by
\begin{align*}
    [f(z),g(z)]_e := \sum_{r=0}^e (-1)^r\binom{e+a-1}{e-r}\binom{e+b-1}{r}f(z)^{(r)}g(z)^{(e-r)},
\end{align*}
where $f(z)^{(r)}$ is the $r$-th normalized derivative $f(z)^{(r)}:=\frac{1}{(2\pi i)^r}\frac{d^r f(z)}{dz^r}$ of $f$.  Moreover, $[f,g]_e\in S_{a+b+2e}(N,\nu_1\nu_2)$; see \cite[Theorem 7.1]{Cohen'smodularformC_k}. 

 We define 
\begin{align}
F_p=\Tr_1^p[G_{k_1,\chi_p},G_{k_2,\chi_p}]_c\in S_K,
 \qquad K=k_1+k_2+2c.
\end{align}
where $\Tr_1^p$  is the trace map 
\begin{align*} 
    \Tr_1^p: M_{m}(p)\rightarrow M_m(1),\quad g\mapsto \sum_{\gamma\in\Gamma_0(p)\backslash\Gamma_0(1)}g|_m\gamma,
\end{align*}
and for $m\in\mathbb{Z}$ and $\gamma=\begin{bsmallmatrix}
    a&b\\c&d
\end{bsmallmatrix}\in \Gl_2^+(\mathbb{R})$  the slash operator is
\begin{align*} 
g(z)|_m\gamma=\det(\gamma)^{m/2} (cz+d)^{-m} g\left(\frac{az+b}{cz+d}\right).
\end{align*}
 In particular, $F_p$ has rational Fourier coefficients; see \cite[Proposition 4.3]{rampantjantauni2026} and \cite[(1.3)]{Ni2026}. Let
$d_K=\dim S_K$, and assume $d_K\geq1$.
With $k_1,k_2,c$ fixed, the following result proves the density-one
version of \cite[Conjecture 1.4]{Ni2026}, which asks that every
$d_K$ distinct admissible primes give a basis.

\begin{corollary}\label{cor:rationalbases}
All but $o(\pi(X;4,(-1)^{k_1})^{d_K})$ ordered $d_K$-tuples of primes
$p_j\leq X$, $p_j\equiv(-1)^{k_1}\pmod4$, give a basis
\begin{align*}
 F_{p_1},\ldots,F_{p_{d_K}}
\end{align*}
of $S_K$ consisting of forms with rational Fourier coefficients.
\end{corollary}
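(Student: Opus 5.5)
The plan is to reduce Corollary~\ref{cor:rationalbases} to Corollary~\ref{cor:determinant} with $m=1$, by expressing each $F_p$ in the eigenbasis $f_1,\ldots,f_{d_K}$ of $S_K$ through twisted $L$-values in the region of absolute convergence. Put $b\equiv(-1)^{k_1}\pmod 4$, so the admissible primes are exactly $\cP_{4,b}$, and $\pi(X;4,(-1)^{k_1})=P_b(X)$.

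\textbf{Step 1: spectral coefficients of $F_p$.} By adjointness of the trace, $\langle F_p,f_i\rangle$ equals, up to a normalization, $\langle [G_{k_1,\chi_p},G_{k_2,\chi_p}]_c,f_i\rangle$ on $\Gamma_0(p)$. This inner product is computed by Zagier's Rankin--Selberg unfolding for Rankin--Cohen brackets with an Eisenstein series, and I would take the formula from \cite{Ni2026}, which is where $F_p$ is constructed. The expected shape is
\[
\langle F_p,f_i\rangle = A_p\, L(f_i,\chi_p,K-1)\,L(f_i,\chi_p^2,K-k_1-c)
\]
or a close variant. Here $A_p\neq0$ depends only on $p,k_1,k_2,c$ and not on $i$. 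The second factor is essentially the untwisted $L(f_i,\cdot)$ with the Euler factor at $p$ removed. I will take $k_2$ as the larger weight, so the point $K-1=k_1+k_2+2c-1$ lies to the right of $(K+1)/2$, since $K-1>(K+1)/2\iff K>3$.

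\textbf{Step 2: control of the second factor.} The factor $L(f_i,\chi_p^2,\cdot)$ depends on $p$ only through one Euler factor at $p$, and I need it to be nonzero. If its argument lies in the region of absolute convergence, it is nonzero by Lemma~\ref{lem:uniformproduct}. If not, I would use the functional equation to move it there, or use the fact from \cite{Ni2026} that the relevant untwisted value is nonzero (for instance a noncentral critical value). The $p$-Euler factor $(1-a_{f_i}(p)p^{-s}+p^{k-1-2s})$ is nonzero for all $p$ once $\re(s)$ is away from the center. I expect this step to be the main obstacle: the exact form of the Rankin--Selberg identity must be pinned down, and in particular which arguments occur and whether the second factor can vanish. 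If the second factor is instead $i$-dependent but independent of $p$ up to this Euler factor, write it as $E_i\cdot e_{i,p}$ with $E_i\neq0$. The factor $e_{i,p}=(1-\ldots)$ does depend on $i$ and $p$, which would break the clean factorization. In that case I would absorb $e_{i,p}$ into a modified product $Y_i(p)$ that still fits Theorem~\ref{thm:abstract}. Such a change alters only a single local factor, which by the argument of Lemma~\ref{lem:uniformproduct} tends to $1$ uniformly as $p\to\infty$. The weak convergence $\nu_X\Rightarrow\nu$, and hence \eqref{eq:twistP=0}, survive this change, so Corollary~\ref{cor:determinant}'s counting argument still applies.

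\textbf{Step 3: determinant reduction.} Write $F_{p_j}=\sum_i \langle F_{p_j},f_i\rangle N_i^{-1} f_i$. Then $F_{p_1},\ldots,F_{p_{d_K}}$ form a basis if and only if
\[
\det\bigl(\langle F_{p_j},f_i\rangle\bigr)=\Bigl(\prod_j A_{p_j}\Bigr)\Bigl(\prod_i E_i\Bigr)\det\bigl(L(f_i,\chi_{p_j},K-1)\bigr)\neq0.
\]
Corollary~\ref{cor:determinant}, applied with $m=1$ and $s=K-1$ and with weight $K$ in place of $k$, bounds the exceptional tuples by $o(P_b(X)^{d_K})$. The rationality of the Fourier coefficients is already known from \cite[Proposition 4.3]{rampantjantauni2026} and \cite[(1.3)]{Ni2026}, which completes the corollary.
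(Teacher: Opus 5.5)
Your reduction is the one the paper uses: take the Rankin--Selberg identity from \cite[Lemma 2.1]{Ni2026}, factor $(\langle f_i,F_{p_j}\rangle)$ as a diagonal matrix in $i$, times $M$, times a diagonal matrix in $j$, and apply Corollary~\ref{cor:determinant} with $m=1$ in weight $K$. However, the identity you posit is not the right one, and the error falls on the only step that needs checking. The identity used in the paper is
\[
\langle f,F_p\rangle=C_p\,L(f,k_1+k_2+c-1)\,L(f,\chi_p,k_2+c),\qquad C_p\neq0\ \text{independent of } f.
\]
This is the shape Zagier's unfolding predicts. Unfolding against $G_{k_2,\chi_p}$ gives $\sum_n a_f(n)\sigma_{k_1-1,\chi_p}(n)n^{-(k_1+k_2+c-1)}$, where $\sigma_{k_1-1,\chi_p}(n)=\sum_{v\mid n}\chi_p(v)v^{k_1-1}$. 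The twisted factor of this convolution is then evaluated at $(k_1+k_2+c-1)-(k_1-1)=k_2+c$, and the leftover $L(k_2,\chi_p)^{-1}$ is absorbed into $C_p$. So the twisted value sits at $s=k_2+c$, not at $K-1$. The other factor is the full untwisted value $L(f,k_1+k_2+c-1)$, with no dependence on $p$.

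Consequently the inequality you check, $K-1>(K+1)/2$, is beside the point, and your Step~3 applies Corollary~\ref{cor:determinant} at a point where the relevant $L$-values do not occur. What must be verified is $k_2+c>(K+1)/2$, i.e.\ $k_2>k_1+1$. The margin is only $(k_2-k_1-1)/2$, and it is positive precisely because $k_1<k_2$ have the same parity, so $k_2\geq k_1+2$. This is where the proof uses the parity hypothesis, and your version never touches it.

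Conversely, the difficulties you anticipate in Step~2 do not arise. $L(f_i,k_1+k_2+c-1)\neq0$ by absolute convergence, since $k_1+k_2>3$. Being independent of $p$, it factors out as an invertible diagonal matrix, so no modified Euler products or changes to Theorem~\ref{thm:abstract} are needed. Under your own guessed formula, the factor $L(f_i,\chi_p^2,\cdot)$ would carry an $i$- and $p$-dependent local factor at $p$, so the clean factorization in your Step~3 would not follow from Step~1 as stated. With the correct identity, your Step~3 goes through verbatim at $s=k_2+c$.
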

\begin{proof}
Let $\HH_K=\{f_1,\ldots,f_{d_K}\}$ be the
normalized Hecke eigenbasis of $S_K$.
For $f\in\HH_K$, the Rankin--Selberg identity recalled in
\cite[Lemma 2.1]{Ni2026} gives
\begin{align}
 \langle f,F_p\rangle
 =C_pL(f,k_1+k_2+c-1)L(f,\chi_p,k_2+c),\label{eq:rankinselbergid}
\end{align}
where $C_p\neq0$ is independent of $f$. Since $k_1$ and $k_2$ have
the same parity, $k_2\geq k_1+2$, and hence
\begin{align*}
 k_2+c>(K+1)/2,\qquad k_1+k_2+c-1>(K+1)/2.
\end{align*}
Hence both $L$-values in \eqref{eq:rankinselbergid} are nonzero. Define
\begin{align*}
 A=(\langle f_i,F_{p_j}\rangle)_{1\leq i,j\leq d_K},\qquad
 M=(L(f_i,\chi_{p_j},k_2+c))_{1\leq i,j\leq d_K}.
\end{align*}
Since the $f_i$ span $S_K$, a form orthogonal to all of them must be zero.
Thus, for every column vector $\vect c=(c_1,\ldots,c_{d_K})^T$,
\begin{align*}
 A\overline{\vect c}=0
 \quad\Longleftrightarrow\quad
 \sum_{j=1}^{d_K}c_jF_{p_j}=0.
\end{align*}
Consequently, $A$ is nonsingular exactly when the $F_{p_j}$ are
linearly independent. Equation~\eqref{eq:rankinselbergid} gives
\begin{align*}
 A={}&\operatorname{diag}(L(f_i,k_1+k_2+c-1))_{i=1}^{d_K}
 \,M\,\operatorname{diag}(C_{p_j})_{j=1}^{d_K}.
\end{align*}
Note that both diagonal matrices are invertible, so $A$ is nonsingular exactly
when $M$ is. Corollary~\ref{cor:determinant}, applied in weight $K$,
proves the assertion.
\end{proof}


\section{Explicit bases in small dimensions}\label{sect:explicitbases}

Corollary~\ref{cor:determinant} gives a basis for almost every prime
tuple at each fixed point in the stated half-planes. When $\dim S_k=2$ or $3$,
the following theorem gives explicit conditions on the primes that
give a basis. 

\begin{theorem}\label{thm:explicitbases}
Suppose that $d=\dim S_k\in\{2,3\}$, and let $p_1,\ldots,p_d$
be odd primes satisfying the following conditions:
\begin{enumerate}
 \item If $d=2$, then $\chi_{p_1}(2)\neq\chi_{p_2}(2)$.
 \item If $d=3$, then $p_j>3$ for every $j$, and the three pairs
 \begin{align*}
  (\chi_{p_j}(2),\chi_{p_j}(3)),\qquad 1\leq j\leq3,
 \end{align*}
 are pairwise distinct.
\end{enumerate}
For $m\in\{1,2\}$ and every complex number $s$ with $\re(s)\geq\sigma_m+17$,
\begin{align}\label{eq:explicitdet}
 \det(L^{(m)}(f_i,\chi_{p_j},s))_{1\leq i,j\leq d}\neq0.
\end{align}
Thus $\mathcal L^{(m)}_{p_1,s},\ldots,\mathcal L^{(m)}_{p_d,s}$ form a basis of
$S_k^*$, and $K^{(m)}_{k,p_1,s},\ldots,K^{(m)}_{k,p_d,s}$ form a basis of $S_k$.
\end{theorem}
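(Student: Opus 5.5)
The plan is a perturbation argument: for $\re(s)$ large, each twisted Euler product is dominated by its local factors at $r=2$ (and $r=3$ when $d=3$). We compare the determinant in \eqref{eq:explicitdet} with that of an explicit "leading" matrix whose non-vanishing is forced by the sign conditions (i)--(ii). Write $w=s-(k-1)/2$ for $m=1$ and $w=s-k+1$ for $m=2$, so that $\sigma=\re(w)\geq18$ in both cases. Let $u_i(r)=\lambda_{f_i}(r)$, respectively $u_i(r)=b_{f_i}(r)$.

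By \eqref{eq:standardproduct} and \eqref{eq:reducedsym}, $L^{(m)}(f_i,\chi_{p_j},s)=c_j\,Y_i(p_j)$ with $c_j=1$ or $c_j=L(w,\chi_{p_j})\neq0$, where $Y_i(p)=\prod_r V_{i,r}(\chi_p(r))$ in the notation \eqref{eq:localV}. Since $c_j$ depends only on the column, it factors out of the determinant, and it suffices to show $\det(Y_i(p_j))\neq0$.

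\emph{Step 1: factor off the small primes.} Let $S=\{2\}$ if $d=2$ and $S=\{2,3\}$ if $d=3$; conditions (i)--(ii) give $p_j\notin S$, so $\chi_{p_j}(r)=\pm1$ for $r\in S$. Write
\[
Y_i(p_j)=\Bigl(\prod_{r\in S}V_{i,r}(\chi_{p_j}(r))\Bigr)(1+\theta_{ij}).
\]
By the estimates in the proof of Lemma~\ref{lem:uniformproduct}, $|\theta_{ij}|\leq\exp\bigl(C_\sigma\sum_{r\notin S}r^{-\sigma}\bigr)-1$. This is $O(3^{-\sigma})$ for $d=2$ and $O(5^{-\sigma})$ for $d=3$, with explicit constants.

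\emph{Step 2: expand the leading matrix.} Expand $V_{i,r}(e)=1+u_i(r)e\,r^{-w}+O(r^{-2\sigma})$. For $d=2$ the leading matrix has determinant
\[
\bigl(u_2(2)-u_1(2)\bigr)\bigl(e_1-e_2\bigr)\,2^{-w}+O(2^{-2\sigma}),\qquad e_j=\chi_{p_j}(2),
\]
which is of exact order $2^{-\sigma}$ by (i), provided $u_1(2)\neq u_2(2)$. For $d=3$, put $e_j=\chi_{p_j}(2)$ and $e'_j=\chi_{p_j}(3)$. Multilinearity gives the main term
\[
(2\cdot3)^{-w}\,\det\bigl(1,u_i(2),u_i(3)\bigr)_{i}\cdot\det\bigl(1,e_j,e'_j\bigr)_{j},
\]
up to corrections of relative size $O(2^{-\sigma}+(3/4)^{\sigma})$. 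The main term includes the possible $t^2$-type contributions, and the only competing term of comparable size is the one with two factors of $2^{-w}$, namely $4^{-w}$ against $6^{-w}$; this needs care. The sign determinant is nonzero because any three distinct vertices of the square $\{\pm1\}^2$ are affinely independent.

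\emph{Step 3: the arithmetic input.} We need $u_1(2)\neq u_2(2)$ for $d=2$, and $\det(1,u_i(2),u_i(3))\neq0$ for $d=3$ (plus, if required by the $4^{-w}$ issue, distinctness of the $u_i(2)$). For $m=2$ these are conditions on $a_{f_i}(r)^2$. They cannot come from Lemma~\ref{lem:separation}, which only gives infinitely many separating primes. Instead I would verify them directly for the finitely many weights with $d=2$ ($k=24,28,30,32,34,38$) and $d=3$ ($k=36,40,42,44,46,50$). The tool is the characteristic polynomials of $T_2$ and $T_3$: they are irreducible (Maeda's conjecture is known in these weights), the $a_f(2)$ are Galois conjugates, and a coincidence $a_f(2)^2=a_g(2)^2$ would force $a_g(2)=-a_f(2)$, which can be excluded by inspecting the polynomial. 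The Vandermonde-type determinant for $d=3$ is checked numerically.

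\emph{Step 4: conclude.} Compare the main term with the errors from Steps 1 and 2: the perturbation of the leading matrix contributes at most an explicit constant times $5^{-\sigma}\cdot(\text{entries bounded by }(1-2^{-\sigma})^{-4})$. Then choose the threshold $\sigma\geq18$ so that it is beaten by $6^{-\sigma}$ times the explicit lower bound for $|\det(1,u_i(2),u_i(3))|$ (respectively $2^{-\sigma}|u_1(2)-u_2(2)|$). The final basis statements follow as in Corollary~\ref{cor:determinant} and Theorem~\ref{thm:kernelspanning}(iii) via \eqref{eq:kernelspectral}.

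I expect the main obstacle to be Step 3 together with the bookkeeping in Step 2 for $d=3$. The gap $\det(1,u_i(2),u_i(3))$ could be numerically small in some weight, and the $4^{-w}$ cross terms must be shown not to swamp the $6^{-w}$ main term; the exponent $17$ is presumably what makes these explicit inequalities close.
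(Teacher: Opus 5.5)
There is a genuine gap in the case $d=3$, in the comparison you set up in Step~4. After you factor off the primes $2$ and $3$, the correction factors are $1+\theta_{ij}$ with $\theta_{ij}=u_i(5)\chi_{p_j}(5)5^{-w}+O(7^{-\sigma})$. The perturbation $\Theta=(A_{ij}\theta_{ij})$ of your leading matrix $A=\bigl(\prod_{r\in S}V_{i,r}(\chi_{p_j}(r))\bigr)_{i,j}$ is therefore in general of exact order $5^{-\sigma}$ entrywise, whereas $\det A$ is of order $6^{-\sigma}$. An estimate $|\det(A+\Theta)-\det A|\le C\,5^{-\sigma}$ obtained from entrywise bounds can never be beaten by $c\,6^{-\sigma}$. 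The ratio $(C/c)(6/5)^{\sigma}$ grows with $\sigma$ (already $(6/5)^{18}\approx 26.6$), so no choice of threshold rescues Step~4. For $d=2$ the same comparison is $3^{-\sigma}$ against $2^{-\sigma}$, and your argument does go through.

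The missing idea is that the tail enters the determinant only through the small $2\times2$ minors of $A$. Since $A_{ij}=1+O(2^{-\sigma})$, every $2\times 2$ minor of $A$, hence every entry of $\operatorname{adj}(A)$, is $O(2^{-\sigma})$. Expanding,
\[
\det(A+\Theta)=\det A+\Tr\bigl(\operatorname{adj}(A)\,\Theta\bigr)+O(25^{-\sigma}),
\]
so the true error is $O(10^{-\sigma})$. This is dominated by $6^{-\sigma}$, comfortably so at $\sigma=18$ once $|\det(1,u_i(2),u_i(3))|$ is bounded below.

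The paper obtains this structurally. It expands $F_{i,p}(w)=\sum_n c_i(n)\chi_p(n)n^{-w}$ and applies Cauchy--Binet, giving $\sum_{n_1<n_2<n_3}\det(c_i(n_\ell))\det(\chi_{p_j}(n_\ell))(n_1n_2n_3)^{-w}$. Here the term $(1,2,3)$ is the main term $D_{k,m}\det B\,6^{-w}$. The term $(1,2,4)$ vanishes because $\chi_p(4)=\chi_p(1)$; in your notation this is $e_j^2=1$, which settles your ``$4^{-w}$'' worry, so no extra condition on the $u_i(2)$ is needed. Every remaining term has $n_1n_2n_3\ge 10$.

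Finally, reaching the stated threshold $17$ requires Step~3 to deliver a quantitative lower bound, not just non-vanishing. The paper proves $|D_{k,m}|\ge 1/4$ exactly from $D_{k,m}^2=\det G_{k,m}/(d!)^{m(k-1)}$, with $G_{k,m}=(\Tr(T_{a^m}T_{b^m}))_{a,b}$ computed by the Eichler--Selberg trace formula. In the worst case ($k=36$, $m=2$) one only has $1\le 16D_{k,m}^2<2$, so a numerical check must be rigorous at that precision.
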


For $d=3$, the triples $(5,13,17)$ and $(7,11,19)$ satisfy
condition~(ii), with all primes congruent to $1$ and $3$ modulo $4$,
respectively.

For the following lemmas, let $d=\dim S_k\geq2$ and fix $m\in\{1,2\}$.
Put $w=s-\sigma_m+1$ and set
$u_i(r)=\lambda_{f_i}(r)$ if $m=1$, and $u_i(r)=b_{f_i}(r)$ if $m=2$.
For $\re(w)>1$, equations~\eqref{eq:standardproduct} and
\eqref{eq:reducedsym} give
\begin{align}\label{eq:reducedvalues}
 F_{i,p}(w)
 &:=\prod_r(1-u_i(r)\chi_p(r)r^{-w}
                     +\chi_p(r)^2r^{-2w})^{-1}\nonumber\\
 &=\begin{cases}
 L(f_i,\chi_p,s),&m=1,\\
L(\Sym^2f_i,\chi_p,s)/L(w,\chi_p),&m=2.
 \end{cases}
\end{align}
It follows that
\begin{align}\label{eq:reducedcoefficients}
 F_{i,p}(w)=\sum_{n\geq1}c_i(n)\chi_p(n)n^{-w},\qquad
 c_i(1)=1,\quad c_i(r)=u_i(r),\quad |c_i(n)|\leq\tau(n),
\end{align}
where $\tau(n)=\sum_{a\mid n}1$, and the series converges
absolutely for $\re(w)>1$.

\begin{lemma}\label{lem:smallcoefficientdet}
For $d=\dim S_k\in\{2,3\}$ and $m\in\{1,2\}$, put
\begin{align}\label{eq:smallcoefficientdet}
D_{k,m}=\det(c_i(j))_{1\leq i,j\leq d}.
\end{align}
Then $|D_{k,m}|\geq1/4$.
\end{lemma}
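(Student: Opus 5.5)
The plan is a finite verification. Only finitely many weights occur: $\dim S_k=2$ exactly for $k\in\{24,28,30,32,34,38\}$, and $\dim S_k=3$ exactly for $k\in\{36,40,42,44,46,50\}$. For each such $k$ and each $m\in\{1,2\}$, \eqref{eq:reducedcoefficients} shows that the matrix in \eqref{eq:smallcoefficientdet} has rows $(1,u_i(2))$ when $d=2$ and $(1,u_i(2),u_i(3))$ when $d=3$. Here $u_i(r)=\lambda_{f_i}(r)$ for $m=1$ and $u_i(r)=\lambda_{f_i}(r)^2-2$ for $m=2$.

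When $d=2$ the determinant is simply $u_2(2)-u_1(2)$. For $m=1$ this equals $\lambda_{f_2}(2)-\lambda_{f_1}(2)$. For $m=2$ it equals $(\lambda_{f_2}(2)-\lambda_{f_1}(2))(\lambda_{f_2}(2)+\lambda_{f_1}(2))$.

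The eigenvalues $a_{f_1}(2),a_{f_2}(2)$ are the two roots of the characteristic polynomial $X^2-TX+N$ of the Hecke operator $T_2$ on $S_k$, which has integer coefficients. I would obtain this polynomial from the Miller basis, or from tabulated Hecke polynomials such as those in the LMFDB. Then
$$\lambda_{f_2}(2)-\lambda_{f_1}(2)=\pm\sqrt{T^2-4N}/2^{(k-1)/2},\qquad \lambda_{f_1}(2)+\lambda_{f_2}(2)=T/2^{(k-1)/2}.$$
So $|D_{k,1}|^2=(T^2-4N)/2^{k-1}$ and $|D_{k,2}|^2=T^2(T^2-4N)/2^{2k-2}$ are explicit rational numbers. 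The claim reduces to checking twelve rational inequalities, six values of $k$ and two values of $m$.

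For example, for $k=24$ we have $a(2)=540\pm12\sqrt{144169}$. This gives $|D_{24,1}|\approx 3.15$ and $|D_{24,2}|\approx 3.15\cdot 0.373\approx1.17$, both comfortably above $1/4$.

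When $d=3$, the determinant $\det(1,u_i(2),u_i(3))$ is alternating in the eigenforms. Its square is therefore invariant under $\mathrm{Gal}(\overline{\mathbb Q}/\mathbb Q)$, which permutes the $f_i$ because the Hecke eigenvalues are conjugate algebraic integers. Hence $D_{k,m}^2$ is rational. I would compute it exactly from the matrices of $T_2$ and $T_3$ on an integral basis of $S_k$, these two matrices being simultaneously diagonalized by the $f_i$. Concretely, I would expand the determinant in terms of symmetric power sums such as $\sum_i u_i(2)^a u_i(3)^b$. These sums are traces of polynomials in $T_2,T_3$ rescaled by powers of $2^{(k-1)/2}$ and $3^{(k-1)/2}$. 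Products $T_2^aT_3^b$ can be rewritten as Hecke operators $T_n$ by multiplicativity and the relation $T_2^2=T_4+2^{k-1}$, after which the traces follow from the Eichler--Selberg trace formula. Alternatively, I would simply compute numerically the three eigenforms' $a(2),a(3)$ to high precision and evaluate the determinant with rigorous error bounds. Either way, the statement becomes six explicit numerical checks for each $m$.

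The main obstacle is the $m=2$ case. There, near-cancellation could occur: when $d=2$ the factor $\lambda_{f_1}(2)+\lambda_{f_2}(2)=T/2^{(k-1)/2}$ may be small, and when $d=3$ the map $u\mapsto u^2-2$ can bring distinct rows close together. No structural lower bound is available, so I would rely on the exact rational values of $D_{k,2}^2$ computed above, certified by exact arithmetic, and confirm that each is at least $1/16$. The constant $1/4$ is presumably chosen as a safe margin below the smallest of these twelve computed values.
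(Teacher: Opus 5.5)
Your plan is correct and is essentially the paper's proof: a finite exact check over the twelve weights using Hecke traces from the Eichler--Selberg formula, which the paper packages as $D_{k,m}^2=\det G_{k,m}/(d!)^{m(k-1)}$ with the Gram matrix $G_{k,m}=(\Tr(T_{a^m}T_{b^m}))_{1\le a,b\le d}=V^TV$, $V=(a_{f_i}(j^m))$. Your power-sum expansion for $d=3$ is exactly $\det G_{k,m}$, and your $d=2$ formulas are $\det G_{k,1}=T^2-4N$ and $\det G_{k,2}=T^2(T^2-4N)$.

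One correction to your last remark: $1/4$ is not a comfortable margin. For $k=36$, $m=2$ one gets $\lfloor 16D_{36,2}^2\rfloor=1$, in fact $|D_{36,2}|\approx 0.30$, so the exact or rigorously certified arithmetic you insist on is genuinely necessary.
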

\begin{proof}
Write $t_k(n)=\Tr(T_n\mid S_k)$ for the trace of the $n$th Hecke operator on $S_k$,  so $t_k(1)=d$, and set
$G_{k,m}=(\Tr(T_{a^m}T_{b^m}))_{1\leq a,b\leq d}$.
The Hecke relation gives
\begin{align}\label{eq:traceproducts}
 (G_{k,m})_{ab}
 =\sum_{\ell\mid\gcd(a^m,b^m)}
   \ell^{k-1}t_k((ab)^m/\ell^2).
\end{align}
Since $T_nf_i=a_{f_i}(n)f_i$, we have $G_{k,m}=V^TV$, where
$V=(a_{f_i}(j^m))_{1\leq i,j\leq d}$.
Since each integer $2\leq j\leq d$ is prime, the identity
$a_{f_i}(j^2)=a_{f_i}(j)^2-j^{k-1}$ gives
\begin{align}\label{eq:tracecoefficients}
 c_i(j)=
 \begin{cases}
 a_{f_i}(j)/j^{(k-1)/2},&m=1,\\
 a_{f_i}(j^2)/j^{k-1}-1,&m=2.
 \end{cases}
\end{align}
Thus, $(c_i(j))_{1\leq i,j\leq d}$ is obtained from $V$ by dividing
column $j$ by $j^{m(k-1)/2}$ for $2\leq j\leq d$ and, when
$m=2$, subtracting the first column from each of these columns.
Consequently,
\begin{align}\label{eq:tracecertificate}
 D_{k,m}^2=\frac{\det G_{k,m}}{(d!)^{m(k-1)}}.
\end{align}

The dimension formula of $S_k$ \cite[Theorem 3.5.2]{diamondshurman} gives the weights in the table below.
We evaluate the traces in \eqref{eq:traceproducts} by the
Eichler--Selberg trace formula
\cite[Theorem~2, p.~48]{Zagier1976}.
Besides $t_k(1)=d$, we need only the traces $t_k(n)$ with $n=2,4,16$
when $d=2$, and with
$n=2,3,4,6,9,16,36,81$  when $d=3$.
Substituting these exact trace values into
\eqref{eq:traceproducts} and \eqref{eq:tracecertificate} gives
the following values of $\lfloor16D_{k,m}^2\rfloor$:
\begin{align}\label{eq:coefficienttable}
 \begin{array}{c|rr@{\qquad}c|rr}
 \multicolumn{3}{c}{d=2}&\multicolumn{3}{c}{d=3}\\
 k&m=1&m=2&k&m=1&m=2\\ \hline
 24&158&22&36&1088&1\\
 28&101&51&40&748&267\\
 30&56&7&42&634&12\\
 32&78&58&44&635&316\\
 34&91&156&46&452&47\\
 38&68&18&50&542&3
 \end{array}
\end{align}
Since every entry is at least $1$, we have $D_{k,m}^2\geq1/16$ and
$|D_{k,m}|\geq1/4$.
\end{proof}

\begin{lemma}\label{lem:determinantexpansion}
Suppose $d\geq2$. For any odd primes $p_1,\ldots,p_d$ and $w\in\CC$
with $\re(w)>1$,
\begin{align}\label{eq:determinantexpansion}
 \det(F_{i,p_j}(w))_{1\leq i,j\leq d}
 =\sum_{1\leq n_1<\cdots<n_d}
\frac{\det(c_i(n_\ell))_{1\leq i,\ell\leq d}
      \cdot \det(\chi_{p_j}(n_\ell))_{1\leq \ell,j\leq d}}
      {(n_1\cdots n_d)^w},
\end{align}
and the series converges absolutely.
\end{lemma}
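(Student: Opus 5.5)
This is a Cauchy--Binet identity for infinite sums. We expand the determinant multilinearly using the Dirichlet series \eqref{eq:reducedcoefficients}, and then group terms by the underlying set of indices.

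\textbf{Step 1: multilinear expansion.} Since each $F_{i,p}(w)=\sum_n c_i(n)\chi_p(n)n^{-w}$ converges absolutely for $\re(w)>1$, the Leibniz formula gives
\begin{align*}
\det(F_{i,p_j}(w))=\sum_{\pi\in S_d}\operatorname{sgn}(\pi)\prod_{j=1}^d F_{\pi(j),p_j}(w).
\end{align*}
Expanding the product of $d$ absolutely convergent series, we obtain
\begin{align*}
\det(F_{i,p_j}(w))=\sum_{(m_1,\ldots,m_d)\in\mathbb{Z}_{\geq1}^d}\frac{\det(c_i(m_j)\chi_{p_j}(m_j))_{i,j}}{(m_1\cdots m_d)^w}.
\end{align*}
For absolute convergence of this $d$-fold sum, bound the summand by $d!\prod_j\tau(m_j)|m_j|^{-\sigma}$. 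Since $\sum_n\tau(n)n^{-\sigma}=\zeta(\sigma)^2<\infty$, the sum is absolutely convergent, which justifies all rearrangements below.

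\textbf{Step 2: collapse repeated indices.} Note that $\det(c_i(m_j)\chi_{p_j}(m_j))_{i,j}=\det(c_i(m_j))_{i,j}\prod_j\chi_{p_j}(m_j)$. If $m_j=m_{j'}$ for some $j\neq j'$, then two columns of $(c_i(m_j))$ coincide, so these terms vanish.

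\textbf{Step 3: group by the underlying set.} The remaining tuples have distinct entries. Write $\{m_1,\ldots,m_d\}=\{n_1<\cdots<n_d\}$, so that $m_j=n_{\tau(j)}$ for a unique $\tau\in S_d$. Then $\det(c_i(m_j))=\operatorname{sgn}(\tau)\det(c_i(n_\ell))$, and the denominator depends only on the set. Summing over $\tau$ gives
\begin{align*}
\sum_{\tau}\operatorname{sgn}(\tau)\prod_j\chi_{p_j}(n_{\tau(j)})=\det(\chi_{p_j}(n_\ell))_{\ell,j},
\end{align*}
which is exactly \eqref{eq:determinantexpansion}.

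\textbf{Convergence of the resulting series.} Absolute convergence of the regrouped series follows by the same bound: using Hadamard's inequality or simply the bound $d!$ times a product, each term is at most $(d!)^2\prod_\ell\tau(n_\ell)n_\ell^{-\sigma}$.

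The only real care needed is the rearrangement step, namely justifying the reordering of the infinite $d$-fold sum. This is handled by the absolute convergence established in Step 1; otherwise the argument is routine Cauchy--Binet bookkeeping.
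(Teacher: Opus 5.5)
Your argument is correct, but it takes a different route from the paper's. The paper proceeds in three steps:
\begin{enumerate}
 \item It truncates each Dirichlet series at $N$ and writes the truncated matrix as $A_NC_N$.
 \item It applies the finite Cauchy--Binet formula \eqref{eq:finitecauchybinet}.
 \item It passes to the limit in two separate moves: $\det M_N\to\det M$ by convergence of the entries and continuity of the determinant, and the finite Cauchy--Binet sums converge to the full series by the tail estimate \eqref{eq:middlestep} over tuples with $n_d>N$, which rests on the Hadamard bound \eqref{eq:minorbound}.
\end{enumerate}

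You instead expand the full determinant by Leibniz into a single absolutely summable $d$-fold series over $\mathbb{Z}_{\geq1}^d$. Fubini for series then replaces both the truncation and the tail estimate. You also re-derive Cauchy--Binet by hand: tuples with a repeated index give two equal columns, and the distinct tuples fall into blocks of size $d!$ whose signed sum is the Leibniz expansion of $\det(\chi_{p_j}(n_\ell))$.

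What each route buys:
\begin{enumerate}
 \item Your route is more self-contained. The regrouping of an absolutely summable family into finite blocks gives the identity and the absolute convergence at once, with no separate limiting argument.
 \item The paper's route quotes a standard identity. As a by-product it records the sharper minor bound $d^d\prod_\ell\tau(n_\ell)$, which is reused quantitatively in the proof of Theorem~\ref{thm:explicitbases}.
\end{enumerate}

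In that later proof the constant matters. With your cruder $(d!)^2$ in place of $d^d$, the constant $450$ in \eqref{eq:determinanttails} becomes $600$. The ratio $600\zeta(2)^6(3/5)^{u}$ is about $1.2$ at $u=18$, so the stated threshold would no longer follow for $d=3$. If you intend to reuse your bound there, take the Hadamard option you mention.

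One small notational point: you use $\tau$ both for the permutation and for the divisor function, so rename one of them.
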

\begin{proof}
Put $u=\re(w)>1$. For $N\geq d$, define the $d\times N$ and $N\times d$ matrices
\begin{align*}
 A_N&=(c_i(n)n^{-w})_{1\leq i\leq d,1\leq n\leq N},
 &C_N&=(\chi_{p_j}(n))_{\substack{1\leq n\leq N,1\leq j\leq d}},
\end{align*}
which gives
\begin{align*}
 M_N=A_NC_N
 =\left[\sum_{n\leq N}c_i(n)\chi_{p_j}(n)n^{-w}\right]_{1\leq i,j\leq d}.
\end{align*}
By the Cauchy--Binet formula \cite[p.~28]{Hornbook}, we have
\begin{align}\label{eq:finitecauchybinet}
 \det M_N
 =\sum_{1\leq n_1<\cdots<n_d\leq N}
 \frac{\det(c_i(n_\ell))_{1\leq i,\ell\leq d}
    \cdot\det(\chi_{p_j}(n_\ell))_{1\leq \ell,j\leq d}}
      {(n_1\cdots n_d)^w}.
\end{align}
Let $M=(F_{i,p_j}(w))_{1\leq i,j\leq d}$.
By \eqref{eq:reducedcoefficients} and $|n^{-w}|=n^{-u}$,
\begin{align*}
 \max_{i,j}|(M_N)_{ij}-M_{ij}|
 \leq\sum_{n>N}\frac{\tau(n)}{n^u}\longrightarrow0
 \qquad(N\to\infty),
\end{align*}
since $\sum_{n\geq1}\tau(n)n^{-u}=\zeta(u)^2<\infty$.
Continuity of the determinant therefore gives
\begin{align*}
 \lim_{N\to\infty}\det M_N=\det M.
\end{align*}
Hadamard's inequality \cite[p.~506]{Hornbook} and
$|\chi_{p_j}(n)|\leq1$ give
\begin{align}\label{eq:minorbound}
\left|\det(c_i(n_\ell))_{1\leq i,\ell\leq d}\cdot
       \det(\chi_{p_j}(n_\ell))_{1\leq \ell,j\leq d}\right|
 \leq d^d\prod_{\ell=1}^d\tau(n_\ell).
\end{align}
Consequently, for every integer $N\geq d$,
\begin{align}
 &\sum_{\substack{1\leq n_1<\cdots<n_d\nonumber\\ n_d>N}} \frac{\left|\det(c_i(n_\ell))_{1\leq i,\ell\leq d}
    \cdot\det(\chi_{p_j}(n_\ell))_{1\leq \ell,j\leq d}\right|}
      {(n_1\cdots n_d)^u}\\&\leq
 d^d\sum_{t>N}\frac{\tau(t)}{t^u}
       \sum_{n_1<\cdots<n_{d-1}<t}
       \prod_{\ell=1}^{d-1}\frac{\tau(n_\ell)}{n_\ell^u}\nonumber\\&
 \leq\frac{d^d\zeta(u)^{2d-2}}{(d-1)!}
\sum_{t>N}\frac{\tau(t)}{t^u},\label{eq:middlestep}
\end{align}
where we used the fact
\begin{align*}
 (d-1)!\sum_{n_1<\cdots<n_{d-1}<t}
       \prod_{\ell=1}^{d-1}\frac{\tau(n_\ell)}{n_\ell^u}
 &\leq \sum_{n_1,\ldots,n_{d-1}\geq1}
       \prod_{\ell=1}^{d-1}\frac{\tau(n_\ell)}{n_\ell^u}\\
 &=\left(\sum_{n\geq1}\frac{\tau(n)}{n^u}\right)^{d-1}
 \\&=\zeta(u)^{2d-2}.
\end{align*}
Taking $N=d$ in \eqref{eq:middlestep} proves absolute convergence
of the tail with $n_d>d$. The only remaining tuple is
$(1,\ldots,d)$, so the whole series in
\eqref{eq:determinantexpansion} converges absolutely.
As $N\to\infty$, the bound in \eqref{eq:middlestep} tends to zero,
so the finite sums in
\eqref{eq:finitecauchybinet} converge to the series in
\eqref{eq:determinantexpansion}. Together with
$\det M_N\to\det M$, this proves the identity.
\end{proof}

\begin{proof}[Proof of Theorem~\ref{thm:explicitbases}]
Put $w=s-\sigma_m+1$, $u=\re(w)$, and
$B=(\chi_{p_j}(n))_{1\leq n,j\leq d}$.
For $d=2$, condition~(i) gives $|\det B|=2$.
For $d=3$, its columns are $(1,\epsilon_j,\eta_j)^T$, where
the $(\epsilon_j,\eta_j)$ are three distinct elements of
$\{-1,1\}^2$. Every such choice gives $|\det B|=4$.

Separating the term $(n_1,\ldots,n_d)=(1,\ldots,d)$ from
the remaining terms in \eqref{eq:determinantexpansion}, we write
\begin{align}\label{eq:leadingdeterminant}
 \det(F_{i,p_j}(w))_{1\leq i,j\leq d}=D_{k,m}(\det B)(d!)^{-w}+E_d(w).
\end{align}
Let $Q_2=3$ and $Q_3=10$. Every nonzero term in $E_d(w)$ has
$n_1\cdots n_d\geq Q_d$. For $d=2$ this is immediate.
For $d=3$, the only remaining triple with product less than $10$
is $(1,2,4)$; the corresponding determinant $\det(\chi_{p_j}(n_\ell))_{1\leq\ell,j\leq3}$ vanishes because
$\chi_{p_j}(1)=\chi_{p_j}(4)=1$ for every $j$.
By \eqref{eq:minorbound}, for
$u\geq2$ we obtain
\begin{align*}
 |E_d(w)|
 &\leq d^dQ_d^{2-u}
 \sum_{n_1<\cdots<n_d}
       \prod_{\ell=1}^d\frac{\tau(n_\ell)}{n_\ell^2}\\
 &\leq\frac{d^d}{d!}Q_d^{2-u}\zeta(2)^{2d}.
\end{align*}
In particular,
\begin{align}\label{eq:determinanttails}
 |E_2(w)|\leq18\zeta(2)^4\,3^{-u},\qquad
 |E_3(w)|\leq450\zeta(2)^6\,10^{-u}.
\end{align}
For either $m\in\{1,2\}$, Lemma~\ref{lem:smallcoefficientdet}
gives $|D_{k,m}|\geq1/4$. Hence the leading term in
\eqref{eq:leadingdeterminant} satisfies
\begin{align*}
 |D_{k,m}(\det B)(d!)^{-w}|
 =|D_{k,m}\det B|(d!)^{-u}
 \geq\begin{cases}
 \frac12\,2^{-u},&d=2,\\
 6^{-u},&d=3.
 \end{cases}
\end{align*}
For $u\geq18$, the bounds in \eqref{eq:determinanttails} give
\begin{align*}
 \frac{|E_2(w)|}{\frac12\,2^{-u}}
 &\leq36\zeta(2)^4\left(\frac23\right)^u<1,\\
 \frac{|E_3(w)|}{6^{-u}}
 &\leq450\zeta(2)^6\left(\frac35\right)^u<1.
\end{align*}
Thus, the leading term in \eqref{eq:leadingdeterminant} strictly exceeds the error term $E_d(w)$ in absolute
value, and $\det(F_{i,p_j}(w))\neq0$ for
$u\geq18$, equivalently $\re(s)\geq\sigma_m+17$.
For $m=2$, the Euler product gives $L(w,\chi_{p_j})\neq0$
when $\re(w)>1$. Multiplying the $j$th column of
$(F_{i,p_j}(w))_{1\leq i,j\leq d}$ by this factor recovers
the matrix in \eqref{eq:explicitdet}.
The matrix $(\mathcal L^{(m)}_{p_j,s}(f_i))_{1\leq i,j\leq d}$
is therefore nonsingular, so these $d$ linear functionals form a
basis of $S_k^*$. The kernel representation \eqref{eq:kernelspectral}
then gives the corresponding basis of $S_k$.
\end{proof}
\section{Questions for future work}\label{sect:discussion}

In this section, we conclude the paper by proposing several questions for future work. The first natural question is to find effective versions of Theorems \ref{thm:standard} and \ref{thm:symmetric}.  
\begin{question}
For fixed $b\in\{1,3\}$, can one give a computable function
$\eta_{k,s}(X)\to0$ that bounds the left sides of
\eqref{eq:standarduniform} and \eqref{eq:symuniform} by
$\eta_{k,s}(X)P_b(X)$ in their respective ranges, together with
an explicit $X_0(\epsilon)$ such that
$\eta_{k,s}(X)<\epsilon$ for every $X\geq X_0(\epsilon)$?
\end{question}

The condition on the lower bound of $\re(s)$ in Theorem~\ref{thm:explicitbases} comes from the estimate for the error term $E_d(w)$ in 
\eqref{eq:determinanttails}. One may get a better lower bound by retaining more terms in
\eqref{eq:determinantexpansion}. A more difficult question is whether, at each fixed integer $\ell>\sigma_m$, every $d$-tuple of distinct odd primes gives a basis; see also \cite[Conjecture 1.4]{Ni2026}. This remains open even in dimensions two and three. 
\begin{question}
Suppose that $d=\dim S_k\in\{2,3\}$. Fix $m\in\{1,2\}$ and an
integer $\ell>\sigma_m$. Do we have
\begin{align*}
 \det(L^{(m)}(f_i,\chi_{p_j},\ell))_{1\leq i,j\leq d}\neq0
\end{align*}
for every $d$-tuple of distinct odd primes $p_1,\ldots,p_d$?
\end{question}

Nevertheless, the explicit criteria of Theorem~\ref{thm:explicitbases} also suggest the following extension.

\begin{question}
Fix $m\in\{1,2\}$ and let $d=\dim S_k\geq4$. Can one give
explicit conditions on finitely many symbols $\chi_{p_j}(r)$
such that 
\begin{align*}
 \det(L^{(m)}(f_i,\chi_{p_j},s))_{1\leq i,j\leq d}\neq0
 \qquad\text{for every real }s\geq s_0(k,m),
\end{align*}
with explicit bounds in terms of $k$ for $s_0(k,m)>\sigma_m$
and for the number and size of the primes $r$?
\end{question}

Another natural direction is to study the cases  $\re(s)=(k+1)/2$ for standard twists and $\re(s)=k$ for symmetric
square twists, where the uniform convergence in
Lemma~\ref{lem:uniformproduct} is no longer available. See \cite{Lamzouri2010} for the related results on the distribution of random Euler products. 
\section*{AI Declaration}
During the preparation of this article, ChatGPT was used for
literature searches, language editing, assistance with checking of proofs, and computational verification related to
Lemma \ref{lem:smallcoefficientdet}. The author prepared the final manuscript and takes full
responsibility for its content and correctness.
\bibliographystyle{amsplain}
\bibliography{ref}

@article {uniformweakconvergence1967,
    AUTHOR = {Billingsley, Patrick and Tops\o e, Flemming},
     TITLE = {Uniformity in weak convergence},
   JOURNAL = {Z. Wahrscheinlichkeitstheorie und Verw. Gebiete},
  FJOURNAL = {Zeitschrift f\"ur Wahrscheinlichkeitstheorie und Verwandte
              Gebiete},
    VOLUME = {7},
      YEAR = {1967},
     PAGES = {1--16},
   MRCLASS = {28.20 (60.00)},
  MRNUMBER = {209428},
MRREVIEWER = {H.\ Bergstr\"om},
       DOI = {10.1007/BF00532093},
       URL = {https://doi-org.libproxy.clemson.edu/10.1007/BF00532093},
}

@article {LoefflerIwasawa2019,
    AUTHOR = {Loeffler, David and Zerbes, Sarah Livia},
     TITLE = {Iwasawa theory for the symmetric square of a modular form},
   JOURNAL = {J. Reine Angew. Math.},
  FJOURNAL = {Journal f\"ur die Reine und Angewandte Mathematik. [Crelle's
              Journal]},
    VOLUME = {752},
      YEAR = {2019},
     PAGES = {179--210},
      ISSN = {0075-4102,1435-5345},
   MRCLASS = {11F67 (11F80 11G18 11R23)},
  MRNUMBER = {3975641},
MRREVIEWER = {Tobias\ Berger},
       DOI = {10.1515/crelle-2016-0064},
       URL = {https://doi-org.libproxy.clemson.edu/10.1515/crelle-2016-0064},
}

@article {Ni2026archive,
    AUTHOR = {Ni, Tianyu},
     TITLE = {A note on the critical values of symmetric square
              {$L$}-functions},
   JOURNAL = {Arch. Math. (Basel)},
  FJOURNAL = {Archiv der Mathematik},
    VOLUME = {126},
      YEAR = {2026},
    NUMBER = {1},
     PAGES = {53--60},
      ISSN = {0003-889X,1420-8938},
   MRCLASS = {11F67 (11F11)},
  MRNUMBER = {5004790},
       DOI = {10.1007/s00013-025-02186-y},
       URL = {https://doi-org.libproxy.clemson.edu/10.1007/s00013-025-02186-y},
}

@article {NiXueperiodandsymperiod2026,
    AUTHOR = {Ni, Tianyu and Xue, Hui},
     TITLE = {Periods and symmetric square periods of modular forms},
   JOURNAL = {Manuscripta Math.},
  FJOURNAL = {Manuscripta Mathematica},
    VOLUME = {177},
      YEAR = {2026},
    NUMBER = {4},
     PAGES = {Paper No. 51, 25},
      ISSN = {0025-2611,1432-1785},
   MRCLASS = {11F67 (11F11)},
  MRNUMBER = {5094731},
       DOI = {10.1007/s00229-026-01735-1},
       URL = {https://doi-org.libproxy.clemson.edu/10.1007/s00229-026-01735-1},
}

@book {diamondshurman,
    AUTHOR = {Diamond, Fred and Shurman, Jerry},
     TITLE = {A first course in modular forms},
    SERIES = {Graduate Texts in Mathematics},
    VOLUME = {228},
 PUBLISHER = {Springer-Verlag, New York},
      YEAR = {2005},
     PAGES = {xvi+436},
      ISBN = {0-387-23229-X},
   MRCLASS = {11Fxx},
  MRNUMBER = {2112196},
MRREVIEWER = {Henri\ Darmon},
}

@book {Hornbook,
    AUTHOR = {Horn, Roger A. and Johnson, Charles R.},
     TITLE = {Matrix analysis},
   EDITION = {Second},
 PUBLISHER = {Cambridge University Press, Cambridge},
      YEAR = {2013},
     PAGES = {xviii+643},
      ISBN = {978-0-521-54823-6},
   MRCLASS = {15-01},
  MRNUMBER = {2978290},
MRREVIEWER = {Mohammad\ Sal\ Moslehian},
}

@article {Stopplesymmetricsquare,
    AUTHOR = {Stopple, Jeffrey},
     TITLE = {Special values of twisted symmetric square {$L$}-functions and
              the trace formula},
   JOURNAL = {Compositio Math.},
  FJOURNAL = {Compositio Mathematica},
    VOLUME = {104},
      YEAR = {1996},
    NUMBER = {1},
     PAGES = {65--76},
      ISSN = {0010-437X,1570-5846},
   MRCLASS = {11F67 (11F25 11F41)},
  MRNUMBER = {1420711},
MRREVIEWER = {Glenn\ Stevens},
       URL = {http://www.numdam.org/item?id=CM_1996__104_1_65_0},
}

@article {rampantjantauni2026,
    AUTHOR = {Ni, Tianyu},
     TITLE = {Representation of {R}amanujan's tau function by twisted
              divisor functions},
   JOURNAL = {Int. J. Number Theory},
  FJOURNAL = {International Journal of Number Theory},
    VOLUME = {22},
      YEAR = {2026},
    NUMBER = {7},
     PAGES = {1525--1539},
      ISSN = {1793-0421,1793-7310},
   MRCLASS = {11F11 (11A25)},
  MRNUMBER = {5105657},
       DOI = {10.1142/S1793042126500788},
       URL = {https://doi-org.libproxy.clemson.edu/10.1142/S1793042126500788},
}

@article{LuoRamakrishnan1997,
  author = {Luo, Wenzhi and Ramakrishnan, Dinakar},
  title = {Determination of modular forms by twists of critical {$L$}-values},
  journal = {Invent. Math.},
  volume = {130},
  number = {2},
  year = {1997},
  pages = {371--398},
  doi = {10.1007/s002220050189},
  mrnumber = {1474162}
}

@article {Shimura1975,
    AUTHOR = {Shimura, Goro},
     TITLE = {On the holomorphy of certain {D}irichlet series},
   JOURNAL = {Proc. London Math. Soc. (3)},
  FJOURNAL = {Proceedings of the London Mathematical Society. Third Series},
    VOLUME = {31},
      YEAR = {1975},
    NUMBER = {1},
     PAGES = {79--98},
      ISSN = {0024-6115,1460-244X},
   MRCLASS = {10D15},
  MRNUMBER = {382176},
MRREVIEWER = {I.\ Piatetski-Shapiro},
       DOI = {10.1112/plms/s3-31.1.79},
       URL = {https://doi-org.libproxy.clemson.edu/10.1112/plms/s3-31.1.79},
}

@book{IwaniecKowalski2004,
  author = {Iwaniec, Henryk and Kowalski, Emmanuel},
  title = {Analytic number theory},
  series = {American Mathematical Society Colloquium Publications},
  volume = {53},
  publisher = {American Mathematical Society},
  address = {Providence, RI},
  year = {2004},
  doi = {10.1090/coll/053},
  mrnumber = {2061214}
}

@article{DukeKowalski2000,
  author = {Duke, William and Kowalski, Emmanuel},
  title = {A problem of {L}innik for elliptic curves and mean-value estimates for automorphic representations},
  journal = {Invent. Math.},
  volume = {139},
  number = {1},
  year = {2000},
  pages = {1--39},
  note = {With an appendix by Dinakar Ramakrishnan, ``Recovering modular forms from squares''},
  doi = {10.1007/s002229900017},
  url = {https://people.math.ethz.ch/~kowalski/linnik.pdf}
}

@article{ChintaDiaconu2005,
  author = {Chinta, Gautam and Diaconu, Adrian},
  title = {Determination of a {$\mathrm{GL}_3$} cuspform by twists of central {$L$}-values},
  journal = {Int. Math. Res. Not.},
  year = {2005},
  number = {48},
  pages = {2941--2967},
  doi = {10.1155/IMRN.2005.2941}
}

@article{KuanLesesvre2022,
  author = {Kuan, Chan Ieong and Lesesvre, Didier},
  title = {Quadratic twists of central values for {$\mathrm{GL}(3)$}},
  journal = {Q. J. Math.},
  volume = {73},
  number = {3},
  year = {2022},
  pages = {991--1034},
  doi = {10.1093/qmath/haab065}
}

@article{Lamzouri2010,
  author = {Lamzouri, Youness},
  title = {Distribution of values of {$L$}-functions at the edge of the critical strip},
  journal = {Proc. Lond. Math. Soc. (3)},
  volume = {100},
  number = {3},
  year = {2010},
  pages = {835--863},
  doi = {10.1112/plms/pdp050},
  url = {https://arxiv.org/abs/0810.2292},
  mrnumber = {2640292}
}

@article{KohamaMizuno2018,
  author = {Kohama, Hayato and Mizuno, Yoshinori},
  title = {Kernel functions of the twisted symmetric square of elliptic modular forms},
  journal = {Mathematika},
  volume = {64},
  number = {1},
  year = {2018},
  pages = {184--210},
  doi = {10.1112/S0025579317000389},
  mrnumber = {3778220}
}

@article{KohamaMizuno2019,
  author = {Kohama, Hayato and Mizuno, Yoshinori},
  title = {Corrigendum: Kernel functions of the twisted symmetric square of elliptic modular forms},
  journal = {Mathematika},
  volume = {65},
  number = {1},
  year = {2019},
  pages = {130--131},
  doi = {10.1112/S0025579318000359}
}

@article{Deligne1974,
  author = {Deligne, Pierre},
  title = {La conjecture de {W}eil. {I}},
  journal = {Inst. Hautes \'Etudes Sci. Publ. Math.},
  number = {43},
  year = {1974},
  pages = {273--307},
  doi = {10.1007/BF02684373},
  mrnumber = {0340258}
}

@book{Durrett2019,
  author = {Durrett, Rick},
  title = {Probability---theory and examples},
  edition = {Fifth},
  series = {Cambridge Series in Statistical and Probabilistic Mathematics},
  volume = {49},
  publisher = {Cambridge University Press},
  address = {Cambridge},
  year = {2019},
  doi = {10.1017/9781108591034},
  url = {https://sites.math.duke.edu/~rtd/PTE/PTE5_011119.pdf},
  mrnumber = {3930614}
}

@misc{Ni2026,
  author = {Ni, Tianyu},
  title = {Rational spanning sets of level-one cusp forms from {E}isenstein series at prime levels},
  year = {2026},
  note = {arXiv:2608.04241},
  url = {https://arxiv.org/abs/2608.04241}
}

@article{NiXueSymmetricPeriods2025,
  author = {Ni, Tianyu and Xue, Hui},
  title = {Linear independence of periods for the symmetric square {$L$}-functions},
  journal = {Ann. Math. Qu\'e.},
  volume = {49},
  number = {2},
  year = {2025},
  pages = {445--462},
  doi = {10.1007/s40316-025-00258-7}
}

@article{NiXueTwisted2026,
  author = {Ni, Tianyu and Xue, Hui},
  title = {Twisted periods of modular forms},
  journal = {Nagoya Math. J.},
  volume = {261},
  year = {2026},
  pages = {Paper No. e18, 39},
  doi = {10.1017/nmj.2026.10099},
  mrnumber = {5034067}
}

@book{CohenStromberg2017,
  author = {Cohen, Henri and Str{\"o}mberg, Fredrik},
  title = {Modular Forms: A Classical Approach},
  series = {Graduate Studies in Mathematics},
  volume = {179},
  publisher = {American Mathematical Society},
  address = {Providence, RI},
  year = {2017},
  doi = {10.1090/gsm/179}
}

@incollection{Zagier1976,
  author = {Zagier, Don},
  title = {The {Eichler--Selberg} trace formula on
           {$\mathrm{SL}_2(\mathbb{Z})$}},
  booktitle = {Introduction to Modular Forms},
  publisher = {Springer-Verlag},
  address = {Berlin--New York},
  series = {Grundlehren der mathematischen Wissenschaften},
  volume = {222},
  year = {1976},
  pages = {44--54},
  note = {Appendix to the book by Serge Lang}
}

\end{document}